\newcommand\cyr{%
 \renewcommand\rmdefault{wncyr}%
 \renewcommand\sfdefault{wncyss}%
 \renewcommand\encodingdefault{OT2}%
\normalfont\selectfont} \DeclareTextFontCommand{\textcyr}{\cyr}
\newtheorem{theorem}{Theorem}
\newtheorem{lemma}[theorem]{Lemma}
\newtheorem{corollary}[theorem]{Corollary}
\newtheorem{proposition}[theorem]{Proposition}
\newtheorem{remark}[theorem]{Remark}
\newcommand{\begd}{\begin{displaystyle}}
\newcommand{\ord}{\textrm{ord}}
\newcommand{\sgn}{\textrm{sgn}}
\newcommand{\cal}[1]{\mathcal{#1}}
\newcommand{\OO}{\mathcal{O}}
\newcommand{\MM}{\mathcal{M}}
\newcommand{\fp}{\mathfrak{p}}
\newcommand{\fl}{\mathfrak{l}}
\newcommand{\newd}{\mathscr{D}}
\def\Z{\mathbb Z}
\def\N{\mathbb N}
\def\Q{\mathbb Q}
\def\R{\mathbb R}
\def\C{\mathbb C}
\def\F{\mathbb F}
\def\G{\mathbb G}
\def\PP{\mathcal{P}}
\def\Ker{\operatorname{Ker}}
\def\Im{\operatorname{Im}}
\def\deg{\operatorname{deg}}
\def\det{\operatorname{det}}
\def\Drin{\operatorname{Drin}}
\def\End{\operatorname{End}}
\def\Hom{\operatorname{Hom}}
\def\Aut{\operatorname{Aut}}
\def\Gal{\operatorname{Gal}}
\def\Centr{\operatorname{Centr}}
\def\Quot{\operatorname{Quot}}
\def\mod{\operatorname{mod}}
\def\lcm{\operatorname{lcm}}
\def\gcd{\operatorname{gcd}}
\def\GL{\operatorname{GL}}
\def\O{\operatorname{O}}
\def\o{\operatorname{o}}
\def\log{\operatorname{log}}
\def\ord{\operatorname{ord}}
\def\rad{\operatorname{rad}}
\def\ds{\displaystyle}
\begin{document}

\title{
The distribution of the first elementary divisor of the reductions of a generic Drinfeld module of arbitrary rank
}


\author{
Alina Carmen Cojocaru and Andrew Michael Shulman }
\address[Alina Carmen  Cojocaru]{
\begin{itemize}
\item[-]
Department of Mathematics, Statistics and Computer Science, University of Illinois at Chicago, 851 S Morgan St, 322
SEO, Chicago, 60607, IL, USA;
\item[-]
Institute of Mathematics  ``Simion Stoilow'' of the Romanian Academy, 21 Calea Grivitei St, Bucharest, 010702,
Sector 1, Romania
\end{itemize}
} \email[Alina-Carmen  Cojocaru]{cojocaru@uic.edu}
\address[Andrew  M. Shulman]{
\begin{itemize}
\item[-]
Department of Mathematics, Statistics and Computer Science, University of Illinois at Chicago, 851 S Morgan St, 322
SEO, Chicago, 60607, IL, USA;
\end{itemize}
} \email[Andrew M. Shulman]{ashulm2@uic.edu}

\thanks{
A.C. Cojocaru's  work on this material was partially supported by the National Science Foundation under agreements No.
DMS-0747724 and No. DMS-0635607, and by the European Research Council 
under Starting Grant 258713.
}

\maketitle

\begin{abstract}\noindent
Let $\psi$ be a generic Drinfeld module of rank $r \geq 2$. We study the first elementary divisor 
$d_{1, \wp}(\psi)$ of the reduction of $\psi$ modulo a prime $\wp$, as $\wp$ varies.
In particular, we prove the existence of  the density of the primes $\wp$ for which $d_{1, \wp} (\psi)$ is fixed. For $r = 2$, we also study the second elementary divisor (the exponent) of the reduction of $\psi$ modulo $\wp$
and prove that, on average,  it has a large norm. Our work is motivated by the study of J.-P. Serre of an elliptic curve analogue of Artin's Primitive Root Conjecture,  and,  moreover,  by refinements to Serre's  study developed by  the first author and M.R. Murty. 
\end{abstract}


\section{Introduction and statement of results}

A beautiful and fruitful theme in number theory is that of exploring versions of one given problem in both the number
field and function field settings. In many instances, such explorations unravel striking analogies, shedding light to
deep basic principles underlying the problem. In other instances, the number field and function field versions of the
same problem turn out to be surprisingly different.

This article is part of such dual investigations, where the problem is that of {\it{Frobenius distributions in
GL-extensions,}} generated by elliptic curves over number fields and by Drinfeld modules over function fields. In
particular, the article focuses on the problem of determining 
the \emph{distribution of the first elementary divisor}
of the reduction modulo a prime of a generic Drinfeld module, as the prime varies. 
Our main result is analogous to 
a generalization of a result of J.-P. Serre \cite{Se}, proven in \cite{CoMu} and \cite{Co3}, for the reductions modulo primes of an elliptic curve over $\Q$.
The techniques used in proving our main result lead to further applications, such as to Drinfeld module analogues of a result of  W. Duke \cite{Du} and of a recent result of T. Freiberg and P. Kurlberg \cite{FrKu},
as we now explain.

Let $E/\Q$ be an elliptic curve over $\Q$, and for a prime $p$ of good reduction, let $E_p/\F_p$ be the reduction of
$E$ modulo $p$. By the theory of torsion points of elliptic curves, there exist uniquely determined
positive integers $d_{1, p}(E), d_{2, p}(E)$ such that
\begin{equation*}\label{elem-div-ec}
E_p(\F_p) \simeq_{\Z} \Z/d_{1, p}(E) \Z \times \Z/d_{2, p}(E) \Z
\end{equation*}
and
\begin{equation*}
d_{1, p}(E) | d_{2, p}(E).
\end{equation*}
In the theory of $\Z$-modules, the integers $d_{1, p}(E), d_{2, p}(E)$ are called the {\bf{elementary divisors}} of
$E_p(\F_p)$, with the largest of them, $d_2 = d_{2, p}(E)$, called the {\bf{exponent}}, having the property that $d_2 x = 0$ for all $x \in E_p(\F_p)$   (see the general definition in \cite[p. 149]{La}).

The study of the growth of $d_{2, p}(E)$, as the prime $p$ varies and $E/\Q$ is fixed, was initiated by R. Schoof
\cite{Sc}, who showed that, if $\End_{\overline{\Q}}(E) \simeq \Z$, then
\begin{equation} \label{schoof}
d_{2, p}(E) \gg \frac{\log p}{(\log \log p)^2} \sqrt{p}.
\end{equation}
W. Duke \cite{Du} improved this bound substantially, but in an ``almost all'' sense. To be precise, Duke showed that,
given any positive function $f$ with $\ds\lim_{x \rightarrow \infty} f(x) = \infty$, 
then, as $x \rightarrow \infty$,
\begin{equation}\label{duke}
\#\left\{p \leq x: d_{2, p}(E) > \frac{p}{f(p)}\right\} \sim \pi(x),
\end{equation}
unconditionally if $\End_{\overline{\Q}}(E) \not\simeq \Z$, and conditionally upon the Generalized Riemann Hypothesis (GRH) if $\End_{\overline{\Q}}(E)
\simeq \Z$; here,  $\pi(x)$ denotes the number of primes $ p \leq x$. 
By the ``Riemann hypothesis for curves over finite fields'' (Hasse's bound in this case),
 the numerator $p$ in the growth $\frac{p}{f(p)}$ of $d_{2, p}(E)$ above is nothing but the order of magnitude of $\#E_p(\F_p)$. Thus, roughly, Duke's result says that, for almost all $p$, the exponent of 
 $E_p(\F_p)$ is almost as large as the order of $E_p(\F_p)$. 
 This behaviour is also confirmed by a recent result  of T. Freiberg and P. Kurlberg 
 \cite{FrKu} (see also the follow up papers by S. Kim \cite{Ki} and J. Wu \cite{Wu}), in the following sense. 
 Under the same assumptions as Duke's, Freiberg and Kurlberg  showed that,
 as $x \rightarrow \infty$,
  \begin{equation}\label{freiberg-kurlberg}
 \frac{1}{\pi(x)} \ds\sum_{p \leq x} d_{2, p}(E) \sim  c(E) x
 \end{equation}
for some explicit constant $c(E) \in (0, 1)$, depending  on $E$. 

The proofs of (\ref{duke}) and (\ref{freiberg-kurlberg}) reduce to the analysis of sums of
the form 
$$\ds\sum_{y < d < z} \#\{p \leq x:  d | d_{1, p}(E)\}$$ 
for  suitable parameters $y = y(x), z = z(x)$.
In particular, they reduce to an understanding of the first elementary divisor  $d_{1, p}(E)$.

The study of $d_{1, p}(E)$, as the prime $p$ varies and $E/\Q$ is fixed, has been carried out  for over four decades and precedes the study of $d_{2, p}(E)$. Most notably, J.-P. Serre \cite{Se} studied the distribution of the primes
$p$ for which $d_{1, p}(E) = 1$  in analogy to the study of the Artin primitive root conjecture, while M. R. Murty \cite{Mu} and, later, the first author of this paper,  refined and strengthened Serre's result, proving the following (see \cite{Co1}, \cite{Co2},  \cite{CoMu}, and \cite{Co3}): for any $d \in \N$, there exists an explicit constant $\delta_{E, \Q}(d) \geq 0$ such that, 
as $x \rightarrow \infty$,
\begin{equation}\label{cojocaru-murty}
\#\{p \leq x: d_{1, p}(E) = d\} \sim \delta_{E, \Q}(d) \pi(x),
\end{equation}
unconditionally if $\End_{\overline{\Q}}(E) \not\simeq \Z$, and conditionally upon GRH if $\End_{\overline{\Q}}(E)
\simeq \Z$.  Under GRH, Cojocaru and Murty \cite{CoMu} (see also \cite{Co3}) showed that the error term in this asymptotic is
$\O_{E, d}\left(x^{\frac{3}{4}} (\log x)^{\frac{1}{2}}\right)$ if $\End_{\overline{\Q}}(E) \not\simeq \Z$,
and
 $\O_{E, d}\left(x^{\frac{5}{6}} (\log x)^{\frac{2}{3}}\right)$ if $\End_{\overline{\Q}}(E) \simeq \Z$.

When considering the function field analogue of these problems, we are naturally led to Drinfeld modules. Indeed, the
role played by {\it{elliptic curves}} over $\Q$ in number field arithmetic is similar to the one played by {\it{rank 2
Drinfeld modules}} over $\F_q(T)$ in function field arithmetic. Drinfeld modules  also come in higher generalities, for
example in higher ranks,
and, as such,  when suitable,
we may focus on Drinfeld modules of {\it{arbitrary}} rank.

To state our main results, we fix the following: $q$ a prime power; $A := \F_q[T]$; $k := \F_q(T)$; $K \supseteq k$ a
finite field extension; $\psi: A \longrightarrow K\{\tau\}$ a generic Drinfeld $A$-module over $K$, of rank $r \geq 2$. Here, $\tau: x
\mapsto x^q$ is the $q$-th power Frobenius  automorphism and $K\{\tau\}$ is the skew-symmetric polynomial ring in $\tau$ over $K$ (we will review
definitions and basic properties in Sections 2 and 3).

By classical theory, all but finitely many of the  primes $\wp$ of $K$ are of good reduction for 
$\psi$. We denote by ${\cal{P}}_{\psi}$ the collection of these primes, and for each  $\wp \in {\cal{P}}_{\psi}$, we consider the residue field $\F_{\wp}$ at $\wp$ and the
$A$-module structure on $\F_{\wp}$, denoted $\psi(\F_{\wp})$, defined by the reduction $\psi \otimes \wp : A
\longrightarrow \F_{\wp}\{\tau\}$  of $\psi$ modulo $\wp$.

By the theory of torsion points for Drinfeld modules and that of finitely generated modules over a PID,
there exist uniquely determined monic polynomials $d_{1, \wp}(\psi), \ldots, d_{r, \wp}(\psi) \in A$ such that
\begin{equation}\label{elem-div-dm}
\psi(\F_{\wp}) \simeq_{A} A/d_{1, \wp}(\psi) A \times \ldots \times A/d_{r, \wp}(\psi) A
\end{equation}
and
$$
d_{1, \wp}(\psi) | \ldots | d_{r, \wp}(\psi).
$$
The polynomials $d_{1, \wp}(\psi), \ldots, d_{r, \wp}(\psi)$ are  the {\bf{elementary divisors}} of the $A$-module
$\psi(\F_{\wp})$, with the largest of them, $d_r = d_{r, \wp}(\psi)$, the {\bf{exponent}}, having the property that
$d_{r}  x = 0$  for all $x \in  \psi(\F_{\wp})$.
Here, $d_r x :=  (\psi \otimes \F_{\wp})(d_r)(x)$.

Associated to this setting, we introduce the following additional notation.
We let $\F_K$ denote the constant field of $K$ and   $c_K := [\F_K:\F_q]$; thus $\F_K = \F_{q^{c_K}}$. For a non-zero $a \in A$, we let
$|a|_{\infty} := q^{\deg a}$, where $\deg a$ is the degree of $a$ as a polynomial in $T$. For a  prime $\wp$ of $K$, we
let $\deg_K \wp := [\F_{\wp} : \F_K]$ and $|\wp|_{\infty} := q^{c_K \deg_K \wp}$. 
We  set
$$
\pi_K(x)
:=
\#\{\wp \; \text{prime of $K$}: \deg_K \wp = x\}
$$
and   recall the effective Prime Number Theorem for $K$:
\begin{equation}\label{PNT}
 \pi_K(x) = \frac{q^{c_K x}}{x} + \O_K\left( \frac{q^{\frac{c_K x}{2}}}{x} \right).
\end{equation}


The  first  main result of the paper is:
\begin{theorem}\label{thm1}
Let $q$ be a  prime power,  
$A := \F_q[T]$, $k := \F_q(T)$, 
and $K/k$ a finite field extension.
Let $\psi: A \longrightarrow K\{\tau\}$  be a generic Drinfeld $A$-module over $K$, of rank $r \geq 2$. 
Let $d \in A$ be monic.
Then, as $x \to \infty$, 
\begin{equation}\label{formula-thm1}
\#\left\{
\wp \in {\cal{P}}_{\psi}:  \deg_K \wp = x, d_{1,\wp}(\psi) = d 
\right\} 
\sim 
\pi_K(x) 
\ds\sum_{\substack{m \in A \\ m \text{ monic}}} 
\frac{\mu_A(m) c_{md}(x)}{[K(\psi[md]):K]},
\end{equation}
where 
$\mu_A(\cdot)$ is the M\"{o}bius function on $A$, 
 $K(\psi[md])$ is the $md$-division field of $\psi$, 
and
\[
c_{md}(x) 
:=
\left\{
\begin{array}{cc}
[K(\psi[md]) \cap \overline{\F}_K : \F_K] & \text{if \; $[K(\psi[md]) \cap \overline{\F}_K : \F_K] \;  | \;  x$,}
\\
0 & \text{otherwise}.
\end{array}
\right.
\]
Moreover, the Dirichlet density of the set
$$
\left\{
\wp \in {\cal{P}}_{\psi}:  d_{1,\wp}(\psi) = d 
\right\} 
$$
 exists and is given by
\begin{equation} \label{definition-of-d-density}
\delta_{\psi,K}(d)
:=
\ds\sum_{\substack{m \in A \\ m \textrm{ monic}}} \frac{\mu_A(m)}{[K(\psi [md]) : K]}.
\end{equation}
\end{theorem}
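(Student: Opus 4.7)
\medskip

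\noindent\textbf{Proof proposal.}
The plan is to follow the M\"obius--Chebotarev strategy used for the elliptic curve analogues in \cite{Se, CoMu, Co3}, transplanted to the function-field/Drinfeld module setting. First I would convert divisibility of $d_{1,\wp}(\psi)$ by a monic $n \in A$ into a splitting condition on $\wp$: using the $A$-module decomposition (\ref{elem-div-dm}) together with the theory of torsion points of Drinfeld modules, for $\wp \in \cal{P}_\psi$ and $n$ coprime to $\wp$,
\[
n \mid d_{1,\wp}(\psi)
\ \Longleftrightarrow\
\psi[n] \subseteq \psi(\F_\wp)
\ \Longleftrightarrow\
\wp \text{ splits completely in } K(\psi[n])/K.
\]
Writing $N(x,d)$ for the left-hand side of (\ref{formula-thm1}) and $N^*(x,n) := \#\{\wp \in \cal{P}_\psi : \deg_K \wp = x,\ n \mid d_{1,\wp}(\psi)\}$, M\"obius inversion on the monoid of monic polynomials of $A$ gives
\[
N(x,d) = \sum_{m \text{ monic}} \mu_A(m)\, N^*(x, md).
\]
The Hasse--Weil bound for $\#\psi(\F_\wp)$ forces $\deg d_{1,\wp}(\psi) \leq c_K \deg_K \wp / r$, so only finitely many $m$ contribute for each fixed $x$.

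Next I would evaluate $N^*(x, md)$ by an effective Chebotarev density theorem for the Galois extension $K(\psi[md])/K$, accounting for the constant-field subextension. A prime $\wp$ of $K$ of degree $x$ splits completely in $K(\psi[md])$ iff it splits completely in the geometric part and the degree $[K(\psi[md]) \cap \overline{\F}_K : \F_K]$ of the constant-field subextension divides $x$. Combining these yields
\[
N^*(x, md)
=
\frac{c_{md}(x)}{[K(\psi[md]) : K]} \pi_K(x)
+
O\!\left([K(\psi[md]):K]\, q^{c_K x/2}/x\right),
\]
with the divisibility constraint absorbed into $c_{md}(x)$ as in the theorem statement and the error term coming from the Riemann hypothesis for curves over $\F_K$.

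The main obstacle is controlling the tail of the M\"obius sum. I would truncate at a parameter $M = M(x)$ growing slowly with $x$, apply the Chebotarev formula above term by term for $\deg m \leq M$, and estimate the contribution from $\deg m > M$ by a sieve-type argument. The decisive input is the genericity hypothesis on $\psi$: via a suitable open-image theorem for Drinfeld modules, it provides the lower bound $[K(\psi[n]):K] \gg |n|_\infty^{r^2}$ up to bounded index, after excluding finitely many bad primes. Since $r \geq 2$, this makes $\sum_m 1/[K(\psi[md]):K]$ converge absolutely, so the tail vanishes in the limit and an optimal balancing of $M$ against the Chebotarev error yields (\ref{formula-thm1}).

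The Dirichlet density assertion is then a consequence. One can either deduce it from (\ref{formula-thm1}) by summing in $x$, or derive it more cleanly by combining Chebotarev in its Dirichlet-density form --- which assigns density $1/[K(\psi[md]):K]$ to the set of primes of $K$ that split completely in $K(\psi[md])/K$ --- with the M\"obius inversion and the absolute convergence established above, thereby extracting $\delta_{\psi, K}(d)$ as in (\ref{definition-of-d-density}).
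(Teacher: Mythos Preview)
Your overall strategy---M\"obius inversion plus effective Chebotarev with a truncation---matches the paper's, and for $r\geq 4$ it would indeed close (the paper itself remarks this). But for $r=2$ there is a genuine gap, and two of your stated inputs are not quite right.

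First, the Chebotarev error you quote, $O\big([K(\psi[md]):K]\,q^{c_Kx/2}/x\big)$, is not the relevant one. The effective Chebotarev of Murty--Scherk gives an error governed by $g_{K(\psi[md])}/[K(\psi[md]):K]$, and Gardeyn's genus bound $g_a \ll_{\psi,K} [K(\psi[a]):K]\cdot\deg a$ reduces this to $O_{\psi,K}\big(\deg(md)\cdot q^{c_Kx/2}/x\big)$---polynomial in $\deg m$, not exponential. Second, ``generic'' in this paper means generic \emph{characteristic}, not $\End_{\overline K}(\psi)=A$; the open-image theorem of Pink--R\"utsche gives $[K(\psi[n]):K]\gg |n|_\infty^{r^2/\gamma}/\log\deg n$ with $\gamma=\mathrm{rank}_A\End_{\overline K}(\psi)$, and $\gamma$ can be as large as $r$.

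The decisive issue is the tail when $r=2$. With the correct Chebotarev error, summing over $\deg m\leq y$ contributes $O(y\,q^{y+c_Kx/2}/x)$; since the natural range is $\deg m\leq c_Kx/2$, taking $y$ that large gives an error $\asymp q^{c_Kx}$, which swamps the main term. So one must take $y<c_Kx/2$, leaving a nonempty tail $\sum_{y<\deg m\leq c_Kx/2}\Pi_1(x;K(\psi[md])/K)$ to bound directly. The divisibility $m^2d^2\mid P_{\psi,\wp}(1)$ alone is not enough here: after summing over the possible values of $a_{1,\wp}(\psi)$ (of degree $\leq c_Kx/2$) one gets only $O(q^{3c_Kx/2-y})$, which never beats $q^{c_Kx}/x$ in the allowed range of $y$. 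The paper's extra ingredient---and the step your ``sieve-type argument'' does not name---is van der Heiden's Weil pairing for Drinfeld modules: it produces a rank-$1$ module $\psi^1$ with $K(\psi^1[md])\subseteq K(\psi[md])$, so that a prime splitting completely in $K(\psi[md])$ also satisfies $md\mid 1+(-1)^{r-1}u_\wp p^{m_\wp}$. This additional congruence cuts the sum over $a_{1,\wp}(\psi)$ by a factor $|m|_\infty$, yielding a tail bound $O(q^{3c_Kx/2-2y})$ that now closes with $y=c_Kx/3$. This is the exact function-field analogue of the inclusion $\mu_m\subset\Q(E[m])$ used in \cite{CoMu}, but in the Drinfeld setting it is not automatic and requires \cite{He}.
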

\noindent
This is a large generalization of a result  proven independently in \cite{KuLi}.

The essence  of the proof of this theorem  may be summarized as a
Chebotarev Density Theorem  for {\it{infinitely many}} Galois extensions generated by the  generic Drinfeld module $\psi$:
\begin{theorem}\label{thm2}
Let $q$ be a prime power,  $A := \F_q[T]$, $k := \F_q(T)$, and $K/k$ a finite field extension.
Let $\psi: A \longrightarrow K\{\tau\}$  be a generic Drinfeld $A$-module over $K$, of
rank $r \geq 2$. 
Then, as $x \to \infty$, 
$$
\ds\sum_{\substack{m \in A \\ m \text{ monic}}} 
\#\left\{
\wp \in {\cal{P}}_{\psi}:  \deg_K \wp = x, \wp \;  \text{splits completely in } K(\psi[m]) 
\right\} 
\sim 
\pi_K(x) 
\ds\sum_{\substack{m \in A \\ \text{ monic}}} \frac{c_m(x)}{[K(\psi[m]):K]},
$$
with notation as in Theorem \ref{thm1}.  
\end{theorem}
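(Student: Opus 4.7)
The plan is a truncated Chebotarev density argument that exploits the availability of the full Riemann hypothesis for curves in the function field setting. To begin, I would use the standard interpretation of complete splitting: a prime $\wp \in \mathcal{P}_\psi$ splits completely in $K(\psi[m])/K$ if and only if $\psi[m] \subset \psi(\F_\wp)$, which by the invariant factor decomposition \eqref{elem-div-dm} is equivalent to $m \mid d_{1,\wp}(\psi)$. The left-hand side of the theorem therefore equals $\sum_{\wp : \deg_K \wp = x} \tau_A(d_{1,\wp}(\psi))$, where $\tau_A$ counts monic divisors in $A$. Combined with the Drinfeld analogue of the Hasse bound $|\psi(\F_\wp)| \ll |\wp|_\infty = q^{c_K \deg_K \wp}$, this forces $|d_{1,\wp}(\psi)|_\infty \ll q^{c_K x/r}$, so for fixed $x$ the $m$-sum on the left is effectively supported on monic $m$ of degree at most $c_K x/r + O(1)$.

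For each fixed monic $m$, I would then apply an effective Chebotarev density theorem to the finite Galois extension $K(\psi[m])/K$. Complete splitting is detected by the trivial element of the geometric Galois group; the factor $c_m(x)$ encodes the compatibility between $\deg_K \wp$ and the Frobenius action on the constant subextension $K(\psi[m]) \cap \overline{\F}_K$, which must divide $x$ for any such prime to exist. Using the Weil bound, one obtains
\[
\#\{\wp \in \mathcal{P}_\psi : \deg_K \wp = x,\ \wp \text{ splits completely in } K(\psi[m])\}
=
\frac{c_m(x)}{[K(\psi[m]):K]} \pi_K(x) + E(x, m),
\]
with $E(x,m)$ controlled polynomially by $[K(\psi[m]):K]$, the genus of $K(\psi[m])$, and $q^{c_K x/2}$.

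To handle the infinite sum over $m$, I would invoke the Drinfeld analogue of Serre's open image theorem for generic $\psi$ (due to Pink--R\"utsche and refinements): there exists a constant $B = B(\psi, K) \geq 1$ such that $[\GL_r(A/mA) : \Gal(K(\psi[m])/K)] \leq B$ for every monic $m \in A$. Hence $[K(\psi[m]):K] \gg |m|_\infty^{r^2}$, and since $r \geq 2$ the series $\sum_m c_m(x)/[K(\psi[m]):K]$ converges uniformly in $x$, with tail $\ll Y^{-(r^2-1)}$ beyond $|m|_\infty > Y$. I would then split the $m$-sum at a threshold $Y = Y(x)$: for $|m|_\infty \leq Y$ I apply the Chebotarev estimate above and sum the individual errors; for $|m|_\infty > Y$ I bound the corresponding contribution to the left-hand side by
\[
\sum_{\substack{\wp \in \mathcal{P}_\psi,\; \deg_K \wp = x \\ |d_{1,\wp}(\psi)|_\infty > Y}} \tau_A(d_{1,\wp}(\psi)),
\]
disposing of it via a divisor-moment estimate together with the upper bound $|d_{1,\wp}(\psi)|_\infty \ll q^{c_K x/r}$.

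The hard part will be controlling the cumulative Chebotarev error uniformly in $m$. This requires a polynomial bound on the genus of $K(\psi[m])$ in terms of $|m|_\infty$ (coming from ramification data of the $m$-torsion cover, together with the conductor--discriminant machinery available for function fields) and the sharp index bound from the open image theorem. Once an explicit bound of the form $E(x,m) \ll |m|_\infty^{C} q^{c_K x/2}$ is in hand, one selects $Y = q^{\alpha x}$ with $\alpha > 0$ chosen to balance the accumulated Chebotarev errors against the divisor-moment tail, and the claimed $\sim$ asymptotic follows.
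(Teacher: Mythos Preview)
Your overall architecture --- truncate the $m$-sum at a threshold $Y$, apply effective Chebotarev below $Y$, and bound the tail above $Y$ --- is exactly the paper's. For $r \geq 3$ this already suffices: taking $Y = q^{c_K x/r}$ makes the tail empty (since $|d_{1,\wp}(\psi)|_\infty \leq q^{c_K x/r}$) and the accumulated Chebotarev error is $\ll q^{c_K x/2 + c_K x/r} = o(\pi_K(x))$. Two points, however, need attention.

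First, a minor correction: the bounded-index statement $[\GL_r(A/mA):\Gal(K(\psi[m])/K)] \leq B$ is only available when $\End_{\overline{K}}(\psi) = A$. In this paper ``generic'' means generic \emph{characteristic}, so $\psi$ may have nontrivial endomorphisms. The correct input is the paper's Theorem~\ref{division-degree}, which gives $[K(\psi[m]):K] \gg |m|_\infty^{r^2/\gamma}$ with $\gamma = \operatorname{rank}_A \End_{\overline{K}}(\psi) \leq r$. Since $r^2/\gamma \geq r \geq 2$, your convergence argument survives, but the exponent you quote is wrong in the CM case.

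Second, and more seriously, for $r=2$ your tail argument does not close. You propose to bound the contribution of $|m|_\infty > Y$ by $\sum_{|d_{1,\wp}|_\infty > Y} \tau_A(d_{1,\wp}(\psi))$ and dispose of it via a ``divisor-moment estimate''. But no such estimate is available a priori: $\tau_A(d_{1,\wp})$ can be as large as $q^{o(x)}$ individually, and bounding the \emph{number} of $\wp$ with $|d_{1,\wp}|_\infty > Y$ is essentially the problem you are trying to solve. If instead you try direct counting via $m^2 \mid P_{\psi,\wp}(1)$, you get $\Pi_1(x;K(\psi[m])/K) \ll q^{3c_K x/2 - 2\deg m}$, whose sum over $\deg m > y$ is $\ll q^{3c_K x/2 - y}$; balanced against the Chebotarev error $q^{c_K x/2 + y}$, both are $\asymp q^{c_K x}$ at $y = c_K x/2$, which is not $o(\pi_K(x))$. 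The paper breaks this deadlock with an additional ingredient you are missing: the Weil-pairing construction of an auxiliary rank~$1$ Drinfeld module $\psi^1$ (Theorem~\ref{split-all}). If $\wp$ splits completely in $K(\psi[m])$, it also splits completely in $K(\psi^1[m])$, giving a second divisibility $m \mid 1 + (-1)^{r-1} u_\wp(\psi) p^{m_\wp}$. Combined with $m^r \mid P_{\psi,\wp}(1)$, this forces $m \mid \tilde{a}_\wp$ for a polynomial $\tilde{a}_\wp$ of degree $\leq \frac{(r-1)c_K x}{r}$, sharply cutting down the count and yielding the tail bound $\ll q^{(2r-1)c_K x/r - ry} + q^{c_K x - (r-1)y}$. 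For $r=2$ this balances with Chebotarev at $y = c_K x/3$, producing the paper's $O(q^{5c_K x/6})$ error. Without this $\psi^1$ input your proof does not go through when $r = 2$.
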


As a consequence of the techniques used in proving Theorems \ref{thm1} and  \ref{thm2},  we obtain the following analogues of  the results of \cite{Du} and \cite{FrKu} in the case of a rank 2 generic Drinfeld module over $K$:
\begin{theorem}\label{thm3}
Let $q$ be a prime power,  $A := \F_q[T]$, $k := \F_q(T)$,
and $K/k$ a finite field extension.
Let $\psi: A \longrightarrow K\{\tau\}$  be a generic Drinfeld $A$-module over $K$, of rank $2$.
\begin{enumerate}
\item[(i)]
Let $f : (0, \infty) \longrightarrow (0, \infty)$ be such that $\ds\lim_{x \rightarrow \infty} f(x) = \infty$.
Then, as  $x \rightarrow \infty$,
$$
\#\left\{
\wp \in {\cal{P}}_{\psi}: \deg_K \wp = x, 
|d_{2, \wp}(\psi)|_{\infty} > \frac{|\wp|_{\infty}}{q^{c_K f(x)}}
\right\}
\sim
\pi_K(x).
$$
Moreover, 
the Dirichlet density of the set
$$
\left\{
\wp \in {\cal{P}}_{\psi}:
|d_{2, \wp}(\psi)|_{\infty} > \frac{|\wp|_{\infty}}{q^{c_K f(\deg_K \wp)}}
\right\}
$$
exists and equals 1.
\item[(ii)]
As $x \rightarrow \infty$, we have the average
$$
\frac{1}{\pi_K(x)}
\ds\sum_{\wp \in {\cal{P}}_{\psi} \atop{\deg_K \wp = x}}
|d_{2, \wp}(\psi)|_{\infty}
\sim
q^{c_K x}
\ds\sum_{m \in A \atop{m \; \text{monic}}}
\frac{c_m(x)}{[K(\psi[m]) : K]}
\ds\sum_{a, b \in A 
\atop{a, b \; \text{monic}
\atop{a b = m}
}
}
\frac{\mu_A(a)}{|b|_{\infty}}.
$$
\end{enumerate}
\end{theorem}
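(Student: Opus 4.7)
The plan is to reduce both parts of Theorem \ref{thm3} to a quantitative understanding of the first elementary divisor $d_{1,\wp}(\psi)$, driven by Theorem \ref{thm1} and the effective Chebotarev input to Theorem \ref{thm2}. The starting point is the identity
\begin{equation*}
|d_{1,\wp}(\psi)|_{\infty} \cdot |d_{2,\wp}(\psi)|_{\infty} = |\wp|_{\infty},
\end{equation*}
valid for every $\wp \in {\cal{P}}_{\psi}$ of positive degree. Indeed, the Fitting ideal of the finite $A$-module $\psi(\F_{\wp})$ is generated by $P_{\wp}(1)$, where $P_{\wp}(X) = X^{2} - a_{\wp} X + \mu_{\wp} \in A[X]$ is the characteristic polynomial of the Frobenius at $\wp$ on the Tate module; the Drinfeld--Weil bound for rank $2$ forces $|\mu_{\wp}|_{\infty} = |\wp|_{\infty}$ and $|a_{\wp}|_{\infty} \leq |\wp|_{\infty}^{1/2}$, whence the non-archimedean triangle inequality gives $|P_{\wp}(1)|_{\infty} = |\wp|_{\infty}$. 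In particular $\deg d_{1,\wp}(\psi) \leq c_{K} (\deg_{K} \wp)/2$, and the condition $|d_{2,\wp}(\psi)|_{\infty} > |\wp|_{\infty}/q^{c_{K} f(x)}$ translates to $\deg d_{1,\wp}(\psi) < c_{K} f(x)$.

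For part (i), it suffices to prove that $\#\{ \wp \in {\cal{P}}_{\psi} : \deg_{K} \wp = x,\, \deg d_{1,\wp}(\psi) \geq y \} = \littleo(\pi_{K}(x))$ whenever $y = y(x) \to \infty$. The key combinatorial observation is that any monic $h \in A$ with $\deg h \geq y$ either (a) has a monic divisor of degree in $[y, 2y)$, or (b) has a monic irreducible factor of degree $\geq y$; one sees this by considering the smallest monic divisor of $h$ of degree $\geq y$ and removing an irreducible factor. Since $d \mid d_{1,\wp}(\psi)$ exactly when $\wp$ splits completely in $K(\psi[d])$, the count is bounded by a sum of complete-splitting counts in $K(\psi[d])/K$ as $d$ ranges over the sets (a) and (b). The unconditional effective Chebotarev for function fields, combined with the lower bound $[K(\psi[d]):K] \gg |d|_{\infty}^{4}$ (from the openness of the adelic Galois image for a generic rank $2$ $\psi$) and the counts $\#\{d \text{ monic},\, \deg d = n\} = q^{n}$ and $\#\{\ell \text{ monic irred},\, \deg \ell = n\} \ll q^{n}/n$, then yields a total contribution of size $\bigo(\pi_{K}(x) q^{-3y}) = \littleo(\pi_{K}(x))$, as desired. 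The Dirichlet density assertion follows by weighting the same bound by $u^{c_{K} \deg_{K} \wp}$ and letting $u \to q^{-1}$.

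For part (ii), the identity of the first paragraph gives
\begin{equation*}
\frac{1}{\pi_{K}(x)} \sum_{\substack{\wp \in {\cal{P}}_{\psi} \\ \deg_{K} \wp = x}} |d_{2,\wp}(\psi)|_{\infty} = \frac{q^{c_{K} x}}{\pi_{K}(x)} \sum_{d \text{ monic}} \frac{N_{d}(x)}{|d|_{\infty}},
\end{equation*}
where $N_{d}(x) := \#\{ \wp \in {\cal{P}}_{\psi} : \deg_{K} \wp = x,\, d_{1,\wp}(\psi) = d \}$. I would truncate this sum at a threshold $y = y(x) \to \infty$ with $y(x) = \littleo(x)$: the tail $\deg d > y$ contributes at most $q^{-y} \#\{\wp : \deg d_{1,\wp}(\psi) > y\}$, which is $\littleo(\pi_{K}(x))$ by part (i). In the remaining range I substitute the asymptotic of Theorem \ref{thm1} (uniformly in $d$) and perform the change of variables $n = md$, $a = m$, $b = d$, which reorganises the main term into
\begin{equation*}
\sim q^{c_{K} x} \sum_{\substack{n \in A \\ n \text{ monic}}} \frac{c_{n}(x)}{[K(\psi[n]):K]} \sum_{\substack{a, b \in A \\ a, b \text{ monic} \\ ab = n}} \frac{\mu_{A}(a)}{|b|_{\infty}},
\end{equation*}
as claimed. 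The main obstacle is securing the uniformity in $d$ invoked here: it demands an effective function-field Chebotarev for $K(\psi[d])/K$ with error term controlled explicitly by the genus and degree of the division field. This rests on openness of the adelic Galois image of the generic $\psi$, together with ramification and degree estimates for Drinfeld module division fields, both of which must be tracked carefully through the proof of Theorem \ref{thm2}.
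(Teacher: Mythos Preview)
Your reduction of part (i) to bounding $\#\{\wp : \deg_K\wp = x,\ \deg d_{1,\wp}(\psi) \geq y\}$ matches the paper's, but the proposed dichotomy (a)/(b) does not close. In case (b) you must sum $\Pi_1(x; K(\psi[\ell])/K)$ over monic irreducibles $\ell$ with $y \leq \deg \ell \leq \tfrac{c_K x}{2}$, and the effective Chebotarev bound (Theorem \ref{cheb-dm}) carries an error term $\O_{\psi,K}\bigl(q^{c_K x/2}\deg\ell/x\bigr)$ that is \emph{independent of} $[K(\psi[\ell]):K]$. Summing this error over all such $\ell$ already produces a term of size $q^{c_K x}/x$, which is of the same order as $\pi_K(x)$, not $\littleo(\pi_K(x))$; your stated bound $\O(\pi_K(x)q^{-3y})$ accounts only for the Chebotarev main term. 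Taking $y > c_K x/4$ to make case (b) vacuous pushes the same obstruction into case (a). The paper circumvents this by invoking the rank-$1$ Drinfeld module $\psi^1$ attached to $\psi$ via the Weil pairing (Theorem \ref{split-all}): complete splitting of $\wp$ in $K(\psi[m])$ forces the additional divisibility $m \mid 1 + (-1)^{r-1} u_\wp(\psi) p^{m_\wp}$, which, combined with $m^r \mid P_{\psi,\wp}(1)$, makes the large-$\deg m$ range tractable by a direct count of residues. This is the $\newd_2(\psi,x,y)$ estimate in Section 5, recycled into the effective form (\ref{effective-thm2}) of Theorem \ref{thm2} and then into (\ref{effective-e}). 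A secondary point: your lower bound $[K(\psi[d]):K] \gg |d|_\infty^4$ assumes $\End_{\overline K}(\psi) = A$, but in this paper ``generic'' means generic characteristic, so $\gamma := \operatorname{rank}_A \End_{\overline K}(\psi)$ may equal $2$, and Theorem \ref{division-degree} only guarantees $[K(\psi[d]):K] \gg |d|_\infty^{2}/\log\deg d$.

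For part (ii) your route through $\sum_d N_d(x)/|d|_\infty$ and Theorem \ref{thm1} differs from the paper's, which instead uses Lemma \ref{lemma3} to expand $1/|d_{1,\wp}|_\infty$ as a divisor sum and passes directly to $\sum_m \bigl(\sum_{ab=m}\mu_A(a)/|b|_\infty\bigr)\,\Pi_1(x;K(\psi[m])/K)$, splits at $y = c_K x/3$, and handles the tail by the same Weil-pairing count. Your approach would go through once part (i) is repaired (your tail bound invokes it) and once the $d$-dependence in the error of (\ref{effective-d}) is made explicit; tracing Section 5 shows it is at worst polynomial in $\deg d$, hence summable against $1/|d|_\infty$. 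Your version is more modular, deducing (ii) from (i) and Theorem \ref{thm1} as black boxes, whereas the paper reuses the raw ingredients of those proofs directly.
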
 

The structure of the paper is as follows.
In Section 2, we review standard notation and terminology for the arithmetic of $A$ and that of $A$-fields, and we include a few lemmas on elementary function field arithmetic which will be used in Sections 5 and 6.
In Section 3, we discuss definitions and main results from the theory of Drinfeld modules, with a focus on
 properties of division fields of  Drinfeld modules. These properties will be relevant in the proofs of Theorems 1, 2, 3. 
In Section 4, we recall an effective version of the Chebotarev density theorem which we apply to division fields of generic Drinfeld modules, using results from Section 3; this application of Chebotarev is the first key ingredient in the proofs of our main theorems.
In Section 5, we present the proofs of Theorems 1 and 2, while in Section 6 we present the proof of Theorem 3. With the algebraic background from Sections 3-5 in place, the general flavour  of our proofs is analytic. We conclude the paper with remarks on the error terms and the densities appearing in our main theorems.


\section{Notation and basic facts}

Throughout the paper, we will use the following notation and basic results.

\subsection{$\N, \R, \C$ notation.}

We use $\N$ for the set of natural numbers $\{1, 2, 3, \ldots\}$, and $\R$, $\C$ for the sets of real,
respectively complex  numbers.

\subsection{$\O, \ll, \o, \sim$ notation.}

For two functions $f, g: D \longrightarrow \R$, with $D \subseteq \C$ and $g$ positive, we write $f (x)= \O(g(x))$ or $f(x) \ll g(x)$ if there is a positive constant $C$ such that $|f(x)| \leq C g(x)$ for all $x \in D$.
  If $C$ depends on another specified  object $C'$, we write $f(x) = \O_{C'}(g(x))$ or $f(x) \ll_{C'} g(x)$. 
  We write
   $f(x) = \o(g(x))$,
  or sometimes  $f(x) \sim 0 \cdot g(x)$,
    if $\ds\lim_{x \rightarrow \infty} \frac{f(x)}{g(x)} = 0$, and  
  $f(x) \sim \delta  g(x)$ for $\delta > 0$ if $\ds\lim_{x \rightarrow \infty} \frac{f(x)}{g(x)} = \delta$
  (whenever the limits exist).

\subsection{Elementary arithmetic.}



We let $q$ be a prime power, fixed throughout the paper.  Our implied $\O$-constants may depend on $q$, without any additional specification.

We denote by $\F_q$ the finite field with $q$ elements, by
$\F_q^{\ast}$ its group of units, by $\overline{\F}_q$ an algebraic
 closure, and by $\tau: x \mapsto x^q$ the $q$-th power Frobenius automorphism.

As in Section 1, we denote by $A := \F_q[T]$ the polynomial ring over $\F_q$ and  by $k := \F_q(T) = \Quot(A)$ its field of fractions; we  denote by $A^{(1)}$ the set of monic polynomials in $A$.

We recall that $A$ is a Euclidean domain, hence the greatest common divisor, denoted $\gcd$, and the least common
multiple, denoted $\lcm$, exist in $A$. We  recall that $\frac{1}{T}$ plays the role of the ``prime at infinity''
of $k$, while the ``finite primes'' of $k$ are identified with monic irreducible polynomials of $A$. We will simply
refer to the latter as the {\bf{primes of $k$}}.

We denote the monic irreducible elements of $A$ by $p$ or $\ell$.
We denote the primes of $k$ by $\mathfrak{p} = p A$, with $p \in A^{(1)}$,  or by $\mathfrak{l} = \ell A$,
with $\ell \in A^{(1)}$.
For such primes,  we denote 
their residue fields by 
$\F_{\mathfrak{p}}$, $\F_{\mathfrak{l}}$,
and 
the   completions of $A$, respectively of  $k$,
by 
$A_{\mathfrak{p}}$, $A_{\mathfrak{l}}$,
and
$k_{\mathfrak{p}}$,  $k_{\mathfrak{l}}$.

For $a \in A$, we use the standard notation:
\begin{itemize}
\item
$\deg a$ for the degree of $a \neq 0$ as a polynomial in $T$, and  $\deg 0 := - \infty$;
\item
$|a|_{\infty} := q^{\deg a}$ if $a \neq 0$, and $|0|_{\infty} := 0$;
\item
$\sgn(a) \in \F_q$ for the leading coefficient of $a$;
\item
$\mu_A(a)$ for the M\"{o}bius function  of $a$ on $A$; that is, using the notation
 $a = \sgn(a) \cdot p_1^{e_1} \ldots p_t^{e_t}$ 
 for  the prime decomposition of $a \in A \backslash \F_q$, we have
$$
\mu_A(a) :=
\left\{ \begin{array}{cc}
1 & \textrm{if} \; a \in \F_q^{*}, \\
 (-1)^t & \textrm{if  $a \in A \backslash \F_q$ and } e_1 = e_2 = \ldots = e_t = 1,  \\
   0    &             \textrm{otherwise} ;            \\
\end{array}
 \right.
$$
\item
$(A/a A)^{\ast}$ for the group of units of $A/a A$;
\item
$\phi_{A}(a)$ for the Euler function of $a$ on $A$; that is,
\begin{eqnarray*}
\phi_A(a)
&:=&
\#(A/a A)^{\ast}
\\
&=&
\#\{a' \in A \backslash \{0\}: \deg a' < \deg a, \gcd(a, a') = 1\}
\\
&=&
|a|_{\infty}
 \ds\prod_{p \in A^{(1)}  \atop{p | a}}
 \left(1 - \frac{1}{|p|_{\infty}}\right);
\end{eqnarray*}
\item
$\GL_r(A/a A) := \left\{ (a_{i j})_{1 \leq i, j \leq r}: a_{i j} \in A/a A, \det (a_{i j})_{i, j} \in (A/a
A)^{\ast} \right\}. $
\end{itemize}

We record below a few arithmetic results  needed in the proofs of our main theorems.  

\begin{lemma}\label{lemma1}
Let $y \in \N$. Then:
\begin{enumerate}
\item[(i)]
$
\ds\sum_{a \in A^{(1)} \atop{0 \leq \deg a \leq y}
}
1
=
\frac{q^{y+1} - 1}{q-1};
$
\item[(ii)]
$
\ds\sum_{a \in A^{(1)} \atop{0 \leq \deg a \leq y}
}
\deg a
\leq
y \frac{q^{y+1} - 1}{q-1}.
$
\end{enumerate}
\end{lemma}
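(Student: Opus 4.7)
The plan is to prove both parts by grouping the sum according to the degree $d = \deg a$, using the elementary observation that the number of monic polynomials in $A = \F_q[T]$ of degree exactly $d \geq 0$ is $q^d$. Indeed, a monic polynomial of degree $d$ has the form $T^d + c_{d-1} T^{d-1} + \ldots + c_1 T + c_0$ with each $c_i \in \F_q$ chosen freely, giving $q^d$ choices; in particular, for $d = 0$, there is a unique monic polynomial, namely $1$, consistent with $q^0 = 1$.

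For part (i), I would write
\[
\ds\sum_{\substack{a \in A^{(1)} \\ 0 \leq \deg a \leq y}} 1
= \ds\sum_{d = 0}^{y} \#\{a \in A^{(1)} : \deg a = d\}
= \ds\sum_{d = 0}^{y} q^{d}
= \frac{q^{y+1} - 1}{q - 1},
\]
where the last equality is the standard geometric series identity (valid since $q \geq 2$).

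For part (ii), I would similarly partition by degree and then bound $\deg a = d \leq y$ inside the sum:
\[
\ds\sum_{\substack{a \in A^{(1)} \\ 0 \leq \deg a \leq y}} \deg a
= \ds\sum_{d = 0}^{y} d \cdot q^{d}
\leq y \ds\sum_{d = 0}^{y} q^{d}
= y \cdot \frac{q^{y+1} - 1}{q - 1},
\]
where the last equality invokes part (i). This is the most natural upper bound, as the exact evaluation of $\sum_{d=0}^{y} d \cdot q^{d}$ would give a slightly sharper expression but is not needed for the intended applications in Sections 5 and 6.

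There is no substantive obstacle: both parts reduce to the count of monic polynomials of a fixed degree over $\F_q$ combined with a finite geometric series. The only mild subtlety is making sure the degree $d = 0$ case is included correctly (since $A^{(1)}$ contains $1$), which is automatic from the convention $q^0 = 1$.
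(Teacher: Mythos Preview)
Your proof is correct and follows exactly the approach indicated in the paper: partition the sum according to $\deg a$, use that there are $q^d$ monic polynomials of degree $d$, sum the geometric series for (i), and bound $d \leq y$ to deduce (ii) from (i).
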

\begin{proof}
Part (i) is deduced easily by partitioning the polynomials $a$ under summation according to their degrees.
Part (ii) follows from this.
\end{proof}

\begin{lemma}\label{lemma2}
Let $y \in \N \backslash \{1, 2\}$ and let $\alpha > 1$.
Then:
\begin{enumerate}
\item[(i)]
$
\ds\sum_{a \in A \atop{\deg a > y}}
\frac{1}{q^{\alpha \deg a}}
=
\frac{q}{\left(1 - \frac{1}{q^{\alpha - 1}}\right) q^{(\alpha - 1)(y + 1) }};
$
\item[(ii)]
$
\ds\sum_{a \in A \atop{\deg a > y}}
\frac{\log \deg a}{q^{\alpha \deg a}}
\leq
\frac{(q - 1) \log y}{ q (\log q) (\alpha - 1) q^{(\alpha - 1) y}}
+
\frac{q-1}{ q (\log q)^2 (\alpha -1)^2 y} \cdot q^{(1 - \alpha) y},
$

\noindent
provided $y$ is sufficiently large (specifically, $(\alpha - 1) (\log q) y (\log y) > 1$).
\end{enumerate}
\end{lemma}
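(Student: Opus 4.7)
The plan for both parts is to partition the sum over $a \in A$ according to $n := \deg a$, using that $A$ contains exactly $(q-1) q^n$ polynomials of any given degree $n \geq 0$, and then to evaluate or estimate the resulting one-variable sum over integers $n > y$.

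For (i), this partition reduces the left hand side to $(q-1)\sum_{n \geq y+1} q^{(1-\alpha)n}$, which is a convergent geometric series since $\alpha > 1$ forces $q^{1-\alpha} \in (0,1)$. I will evaluate the geometric series in closed form and then simplify via the algebraic identity $1 - q^{1-\alpha} = (q^{\alpha-1}-1)/q^{\alpha-1}$ to recover the stated formula.

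For (ii), the same partition reduces the left hand side to $(q-1)\sum_{n > y}(\log n)\, q^{(1-\alpha)n}$. The key additional input will be the elementary inequality $\log n \leq \log y + (n-y)/y$ for $n \geq y$, a consequence of $\log(1+t)\leq t$, which splits the sum into
\[
\log y \cdot \sum_{n > y} q^{(1-\alpha)n} \; + \; \frac{1}{y} \sum_{n > y} (n-y)\, q^{(1-\alpha)n}.
\]
The first piece is exactly the one handled in (i) and will contribute the $\log y$ term of the bound. The second, after the substitution $m = n - y$, becomes $\frac{q^{(1-\alpha)y}}{y}\sum_{m \geq 1} m\, q^{(1-\alpha)m}$, which I will compute by differentiating the geometric series $\sum_{m} r^m = r/(1-r)$ at $r = q^{1-\alpha}$. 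The two contributions will yield, respectively, the two terms of the claimed upper bound.

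I expect the main obstacle to be cosmetic rather than conceptual: matching the denominators $1/(\alpha - 1)$ and $1/(\alpha-1)^2$ that appear in the stated bound requires replacing the exact factor $1 - q^{1-\alpha}$ by its linear approximation $(\alpha-1)\log q$, which can be done via the inequality $1 - e^{-t} \geq t(1 - t/2)$ applied to $t = (\alpha-1)\log q$. The technical hypothesis $(\alpha-1)(\log q)\, y \log y > 1$ for sufficiently large $y$ is precisely what will allow the correction terms to be absorbed cleanly into the stated form and will ensure that the derivative-of-geometric piece is genuinely of lower order than the $\log y$ piece.
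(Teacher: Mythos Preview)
Your treatment of (i) is correct and is exactly what the paper does.

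For (ii) your route diverges from the paper's. The paper partitions by degree to reduce to $(q-1)\sum_{n>y}(\log n)\,q^{(1-\alpha)n}$, then bounds this sum by the integral $\int_y^\infty (\log t)\,q^{(1-\alpha)t}\,dt$ (valid once the integrand is decreasing on $[y,\infty)$, which is precisely the role of the hypothesis $(\alpha-1)(\log q)\,y\log y>1$), and evaluates the integral by one integration by parts followed by the crude bound $\frac{1}{t}\le\frac{1}{y}$ on the remaining integral. This produces the factors $(\alpha-1)\log q$ and $((\alpha-1)\log q)^2$ in the denominators directly, since $\int e^{-ct}\,dt=-\frac{1}{c}e^{-ct}$ with $c=(\alpha-1)\log q$.

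Your alternative---splitting via $\log n\le \log y+(n-y)/y$ and summing the resulting series in closed form---is valid and yields a bound of the correct order, but the denominators it produces are $1-q^{1-\alpha}$ and $(1-q^{1-\alpha})^2$, not $(\alpha-1)\log q$ and $((\alpha-1)\log q)^2$. Here is the concrete failing step: since $1-e^{-t}<t$ for all $t>0$, one has $\frac{1}{1-q^{1-\alpha}}>\frac{1}{(\alpha-1)\log q}$, so the replacement you propose goes the wrong way for an upper bound. Your suggested inequality $1-e^{-t}\ge t(1-t/2)$ only gives
$\frac{1}{1-q^{1-\alpha}}\le\frac{1}{(\alpha-1)(\log q)\bigl(1-\frac{1}{2}(\alpha-1)\log q\bigr)}$,
which is still strictly larger than $\frac{1}{(\alpha-1)\log q}$; moreover the correction factor depends only on $\alpha$ and $q$, not on $y$, so no ``sufficiently large $y$'' can absorb it. In short, your method cannot recover the stated constants; the integral comparison is what manufactures the $(\alpha-1)\log q$ form. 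That said, your bound is correct up to a constant depending only on $q$ and $\alpha$, and since every use of this lemma in the paper sits inside a $\ll_{\psi,K}$ estimate, your version would still suffice for all the applications.
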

\begin{proof}
As in the proof of Lemma \ref{lemma1}, part (i) is deduced easily by partitioning the polynomials $a$
under summation according to their degrees.
Part (ii) is deduced proceeding similarly, and also by using integration by parts.


\end{proof}

\begin{lemma}\label{fund-Moeb}
Let $a \in A \backslash \{0\}$. Then
\begin{equation*}
\ds\sum_{d \in A \atop{d \mid a}}
\mu_{A}(d)
=
\left\{ \begin{array}{cc}
q - 1 & \textrm{if} \; a \in \F_q^{*}, \\
   0    &             \textrm{otherwise} .      \\
\end{array}
 \right.
\end{equation*}
\end{lemma}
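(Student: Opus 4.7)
The plan is to reduce the stated sum, which runs over \emph{all} divisors of $a$ in $A$, to the classical Möbius identity over monic divisors, picking up the overall factor $q-1$ from the $q-1$ units of $A$.

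First, I would partition the divisors of $a$ according to associates: every divisor $d \in A$ of $a$ factors uniquely as $d = u d_0$ with $u \in \F_q^{*}$ and $d_0 \in A^{(1)}$, and as $d$ ranges over all divisors of $a$ the monic representative $d_0$ ranges over the monic divisors of $a$, each appearing exactly $q-1$ times (once per unit $u$). Next I would check that the definition of $\mu_A$ given in the paper is insensitive to the leading coefficient: for $d_0 \in A^{(1)} \setminus \F_q$ with prime factorization $d_0 = p_1^{e_1}\cdots p_t^{e_t}$, the element $u d_0$ has prime factorization $\sgn(u d_0) p_1^{e_1}\cdots p_t^{e_t}$, so by the definition $\mu_A(ud_0) = (-1)^t \mathbf{1}_{\{e_i=1 \;\forall i\}} = \mu_A(d_0)$, while for $d_0 = 1$ one has $\mu_A(u\cdot 1) = 1 = \mu_A(1)$. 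Consequently,
\[
\sum_{d \mid a} \mu_A(d) \;=\; \sum_{\substack{d_0 \in A^{(1)} \\ d_0 \mid a}} \sum_{u \in \F_q^{*}} \mu_A(u d_0) \;=\; (q-1) \sum_{\substack{d_0 \in A^{(1)} \\ d_0 \mid a}} \mu_A(d_0).
\]

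Second, I would invoke the classical Möbius identity restricted to monic divisors in the unique factorization domain $A$: writing $a = \sgn(a) a_0$ with $a_0 \in A^{(1)}$, the monic divisors of $a$ coincide with the monic divisors of $a_0$, and the standard argument (multiplicativity of $\mu_A$ on $A^{(1)}$ together with $\sum_{d_0 \mid p^e,\, d_0\in A^{(1)}} \mu_A(d_0) = 1 - 1 = 0$ for each prime power $p^e$ with $e \geq 1$) gives
\[
\sum_{\substack{d_0 \in A^{(1)} \\ d_0 \mid a_0}} \mu_A(d_0) \;=\; \begin{cases} 1 & \text{if } a_0 = 1, \\ 0 & \text{otherwise}. \end{cases}
\]
Combining this with the displayed identity above yields the claimed value $q-1$ precisely when $a_0 = 1$, i.e.\ when $a \in \F_q^{*}$, and $0$ otherwise.

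There is no real obstacle here; the only point requiring care is the verification that the $\mu_A$ defined in the paper (on all of $A \setminus\{0\}$, not just on $A^{(1)}$) agrees with the classical monic Möbius function up to the trivial dependence on $\sgn$. Once that is in hand, the proof is a one-line reduction to the familiar Möbius cancellation, with the factor $q-1$ coming from summing over the units $\F_q^{*} \subset A$.
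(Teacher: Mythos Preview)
Your proof is correct and follows essentially the same approach as the paper: both arguments separate out the $q-1$ units (your decomposition $d = u d_0$ versus the paper's enumeration of divisors as $u^{\alpha} p_{i_1}\cdots p_{i_s}$) and then carry out the standard M\"obius cancellation over the prime factors. The only cosmetic difference is that the paper first passes to $\mathrm{rad}(a)$ and then expands via the binomial identity $\sum_{i=0}^{t}(q-1)\binom{t}{i}(-1)^i = 0$, whereas you invoke multiplicativity of $\mu_A$ on $A^{(1)}$ and the prime-power case directly; these are two phrasings of the same computation.
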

\begin{proof}
If $a \in \F_q^{\ast}$, then 
$$
\ds\sum_{d \in A \atop{d \mid a}} \mu_A(d) = \ds\sum_{d \in \F_q^{\ast}} 1 = q - 1.
$$
If $a \not\in \F_q^{\ast}$, let 
$$
a = \sgn(a) \cdot p_1^{e_1} \ldots p_t^{e_t}
$$
be the prime factorization of $a$ and let
$$
\text{rad} \; (a) :=  \sgn(a) \cdot p_1 \ldots p_t
$$
be the radical of $a$.
Then 
$$
\ds\sum_{d \in A \atop{d \mid a}}
\mu_A(d)
=
\ds\sum_{d \in A \atop{d \mid \text{rad} \; (a)}}
\mu_A(d).
$$
Note that, if $\F_q^{\ast} = \langle u \rangle$, then the divisors $d \mid \text{rad} \; (a)$ are of the form: 
$u^{\alpha}$ for some $1 \leq \alpha \leq q-1$;
or
$u^{\alpha} p_i$ for some $1 \leq \alpha \leq q-1$ and some $1 \leq i \leq t$;
or
$u^{\alpha} p_{i_1} p_{i_2}$ for some $1 \leq \alpha \leq q-1$ and some $1 \leq i_1 < i_2 \leq t$,
and so on.
For the first, there are $(q-1)  {t  \choose 0} $ possibilities,
for the second there are $(q - 1) {t \choose 1}$ possibilities,
for the third there are $(q - 1) {t \choose 2}$ possibilities,
and so on. In summary, we have
$$
\ds\sum_{d \in A \atop{d \mid \text{rad} \; (a)}}
\mu_A(d)
=
\ds\sum_{0 \leq i \leq t} (q - 1) {t \choose i} (-1)^i
=
(q-1)(1-1)^t
=
0.
$$
This completes the proof.
\end{proof}

\begin{lemma}\label{lemma3}
Let $d \in A \backslash \{0\}$. Then
$$
\frac{1}{|d|_{\infty}}
=
\frac{1}{(q-1)^2}
\ds\sum_{a, b \in A \atop{a b\mid d}}
\frac{\mu_A(a)}{|b|_{\infty}}.
$$
\end{lemma}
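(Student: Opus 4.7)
The plan is to decouple the double sum by first summing over $a$ and then applying Lemma \ref{fund-Moeb}.

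First I would rewrite the right-hand side by fixing $b$ and summing over $a$. Since $A$ is a domain, $ab \mid d$ is equivalent to $b \mid d$ together with $a \mid d/b$. Thus
$$
\ds\sum_{a, b \in A \atop{ab \mid d}} \frac{\mu_A(a)}{|b|_\infty}
=
\ds\sum_{b \in A \atop{b \mid d}} \frac{1}{|b|_\infty} \ds\sum_{a \in A \atop{a \mid d/b}} \mu_A(a).
$$

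Next, I would apply Lemma \ref{fund-Moeb} to the inner sum, which equals $q-1$ if $d/b \in \F_q^{\ast}$ and $0$ otherwise. The condition $d/b \in \F_q^{\ast}$ means that $b = v d$ for some $v \in \F_q^{\ast}$; these give exactly $q - 1$ distinct elements $b \in A$, each with $|b|_\infty = |v|_\infty |d|_\infty = |d|_\infty$ (since elements of $\F_q^{\ast}$ have degree $0$). Substituting,
$$
\ds\sum_{a, b \in A \atop{ab \mid d}} \frac{\mu_A(a)}{|b|_\infty}
=
\ds\sum_{v \in \F_q^{\ast}} \frac{q - 1}{|v d|_\infty}
=
\frac{(q-1)^2}{|d|_\infty}.
$$
Dividing through by $(q-1)^2$ yields the claimed identity.

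There is no real obstacle here; the only subtle point is keeping track of which $b$'s survive after the M\"{o}bius cancellation (namely, the $q-1$ unit-multiples of $d$) and noting that elements of $\F_q^{\ast}$ have $|\cdot|_\infty = 1$, so that each contributing $b$ has the same norm $|d|_\infty$. The identity can be read as a Drinfeld-module-setting analogue of the classical number field identity $\frac{1}{n} = \sum_{ab \mid n} \frac{\mu(a)}{b}$, with the factor $(q-1)^2$ accounting for the units of $A = \F_q[T]$.
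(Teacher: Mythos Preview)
Your proof is correct and follows essentially the same approach as the paper: both decouple the double sum by fixing $b$ and summing over $a$, apply Lemma~\ref{fund-Moeb} to kill all terms except those with $d/b \in \F_q^{\ast}$, and then observe that the $q-1$ surviving $b$'s each have $|b|_\infty = |d|_\infty$. Your write-up is slightly more explicit about why exactly $q-1$ values of $b$ survive, but the argument is the same.
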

\begin{proof}
We have
\begin{eqnarray*}
\frac{1}{(q-1)^2}
\ds\sum_{a, b \in A \atop{a b \mid d}}
\frac{\mu_A(a)}{|b|_{\infty}}
&=&
\frac{1}{(q-1)^2}
\ds\sum_{b \in A}
\frac{1}{|b|_{\infty}}
\ds\sum_{a \in A \atop{a | \frac{d}{b}}}
\mu_A(a)
\\
&=&
\frac{1}{q-1}
\ds\sum_{
b \in A
\atop{\frac{d}{b} \in \F_q^{\ast}}
}
\frac{1}{|b|_{\infty}}
\\
&=&
\frac{1}{|d|_{\infty}},
\end{eqnarray*}
by also using Lemma \ref{fund-Moeb}.
\end{proof}

\begin{lemma}\label{moebius-sum}
Let $d \in A^{(1)}$. Then
$$
\ds\sum_{m, n \in A^{(1)} \atop{m n = d}}
\frac{\mu_A(m)}{|n|_{\infty}}
=
\frac{
(-1)^{\omega(d)} \phi_A(\text{rad} \; (d))
}{
|d|_{\infty}
},
$$
where $\omega(d)$ is the number of all monic prime divisors of $d$ (counted without multiplicities) and 
$\text{rad} \; (d)$ is the radical of $d$.
\end{lemma}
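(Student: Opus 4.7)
The plan is to recognize the sum as a Dirichlet-style convolution on the monoid $A^{(1)}$ of monic polynomials, which reduces the identity to a routine prime-by-prime calculation. Concretely, define
\[
f(d) := \ds\sum_{m, n \in A^{(1)} \atop{m n = d}} \frac{\mu_A(m)}{|n|_{\infty}}.
\]
Both $m \mapsto \mu_A(m)$ and $n \mapsto 1/|n|_{\infty}$ are multiplicative on $A^{(1)}$ (the latter because $\deg$ is additive, and the former being classical). Since the unique monic unit is $1$, the convolution of two multiplicative functions on $A^{(1)}$ is again multiplicative, so $f$ is multiplicative on $A^{(1)}$.

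Next, I would evaluate $f$ at prime powers. For $p \in A^{(1)}$ monic irreducible and $e \geq 1$, the only monic divisors $m \mid p^e$ with $\mu_A(m) \neq 0$ are $m = 1$ and $m = p$, giving
\[
f(p^e) = \frac{1}{|p^e|_{\infty}} - \frac{1}{|p^{e-1}|_{\infty}} = -\frac{|p|_{\infty} - 1}{|p|_{\infty}^e}.
\]

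Finally, writing the prime factorization $d = p_1^{e_1} \cdots p_t^{e_t}$, multiplicativity yields
\[
f(d) = \ds\prod_{i=1}^t f(p_i^{e_i}) = \ds\prod_{i=1}^t \left(-\frac{|p_i|_{\infty} - 1}{|p_i|_{\infty}^{e_i}}\right) = \frac{(-1)^t \ds\prod_{i=1}^t (|p_i|_{\infty} - 1)}{|d|_{\infty}}.
\]
Recognizing $t = \omega(d)$ and using that $\phi_A$ is itself multiplicative with $\phi_A(p_i) = |p_i|_{\infty} - 1$, the numerator equals $(-1)^{\omega(d)} \phi_A(\text{rad}(d))$, which matches the claimed right-hand side. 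There is no real obstacle here; the only point to be mindful of is that the sum is restricted to \emph{monic} $m, n$, which is what makes the multiplicativity argument clean (only the trivial unit $1$ appears as a monic unit), and one should briefly justify this restriction before invoking multiplicativity. An equivalent direct route, which I would mention as a sanity check, is to observe that only squarefree monic $m \mid d$ contribute, sum over subsets $S \subseteq \{1, \ldots, t\}$ via $m = \prod_{i \in S} p_i$, and factor the resulting expression as $|d|_{\infty}^{-1} \prod_i (1 - |p_i|_{\infty})$.
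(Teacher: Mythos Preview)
Your proof is correct and follows essentially the same approach as the paper: both arguments invoke multiplicativity of the convolution to reduce to prime powers, compute $f(p^e) = (1 - |p|_{\infty})/|p|_{\infty}^e$, and multiply over the prime-power factors of $d$. The paper's proof is simply more terse, stating ``by multiplicativity'' and writing the product directly, whereas you spell out why the convolution of $\mu_A$ and $1/|\cdot|_{\infty}$ on $A^{(1)}$ is multiplicative.
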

\begin{proof}
By multiplicativity, we have
\begin{eqnarray*}
\ds\sum_{m, n \in A^{(1)} \atop{m n = d}}
\frac{\mu_A(m)}{|n|_{\infty}}
=
\ds\prod_{p \in A^{(1)} \atop{p^t || d}}
\ds\sum_{
n \in A^{(1)}
\atop{n \mid p^t}
}
\frac{ \mu_A \left(  \frac{p^t}{n}  \right)}{|n|_{\infty}}
=
\ds\prod_{p \in A^{(1)} \atop{p^t || d}}
\frac{1 - |p|_{\infty}}{|p|_{\infty}^t}
=
\frac{
(-1)^{\omega(d)} \phi_A(\text{rad} \; (d))
}{
|d|_{\infty}
}.
\end{eqnarray*}
\end{proof}

\subsection{$A$-modules}

For $A$-modules $M_1$, $M_2$, we write $M_1 \simeq_A M_2$ to mean that $M_1$, $M_2$ are isomorphic over $A$, and $M_1 \leq_A M_2$ to mean that $M_1$ is an $A$-submodule of $M_2$.

For a non-zero finite $A$-module $M$, we let $\chi(M)$ be its {\bf Euler-Poincar\'{e} characteristic}, defined as the ideal of $A$ uniquely determined by the conditions:
\begin{enumerate}
\item[(i)] if $M \simeq_A A/\fp$ for a prime ideal $\fp$ of $A$, then $\chi(M) := \fp$;
\item[(ii)] if $0 \to M_1 \to M \to M_2 \to 0$ is an exact sequence of $A$-modules, then $\chi(M) = \chi(M_1) \chi(M_2)$.
\end{enumerate}
We let $|\chi(M)|_{\infty} := |m|_{\infty}$ for some generator $m \in A$ of $\chi(M)$.

\subsection{$A$-fields}

We reserve the notation $(L, \delta)$ for {\bf{$A$-fields}}, that is, pairs consisting of a field $L \supseteq \F_q$
and an $\F_q$-algebra homomorphism $\delta : A \longrightarrow L$. We recall that the kernel of $\delta$ is called the
{\bf{$A$-characteristic of $L$}}; in particular, if $\Ker \delta = (0)$, $L$ is said to have {\bf{generic
$A$-characteristic}}, and if $\Ker \delta \neq (0)$, $L$ is said to have {\bf{finite $A$-characteristic}}.

We denote by $\overline{L}$ an algebraic closure of $L$, by $L^{\text{sep}}$ the separable closure of $L$ in
$\overline{L}$, and by $G_L := \Gal(L^{\text{sep}}/L)$ the absolute Galois group of $L$. We also denote by $L\{\tau\}$
the {\bf{skew-symmetric polynomial ring in $\tau$ over $L$}}, that is,
$$
L\{\tau\} :=\left\{\ds\sum_{0 \leq i \leq n} c_i \tau^i: c_i \in L \; \forall \; 0 \leq i \leq n, n \in \N \cup \{0\}\right\},
$$
with the multiplication rule
$$
\tau c = c^q \tau \; \forall c \in L.
$$
For an element $f \in L\{\tau\}$, we denote by $\deg_{\tau} (f)$ its degree as a polynomial in $\tau$. We recall that
$L\{\tau\}$ is isomorphic to the $\F_q$-endomorphism ring $\End_{\F_q}(\G_a/L)$ of the additive group scheme $\G_a$ over $L$.

\subsection{Finite field extensions of $k$}

We reserve the notation $K$ for a finite field extension of $k$ of genus $g_K$. 
Note that the inclusions $A \subseteq  k$ and $k \subseteq K$ gives $K$ an $A$-field structure of generic $A$-characteristic.
We denote  by $\F_K$ the constant field of $K$ (that is, $\F_K = K \cap
\overline{\F}_q$), and  by $\overline{\F}_K$ an algebraic closure of $\F_K$. We  set  $c_K := [\F_K : \F_q]$.

By a {\bf{prime of $K$}} we mean a discrete valuation ring $\OO$ with maximal ideal $\MM$ such that 
 the quotient field  $\Quot(\OO)$ of $\OO$ equals $K$.  In particular, for a prime $\wp$ of $K$, we denote by $({\cal{O}}_{\wp},
{\cal{M}}_{\wp})$ the associated discrete valuation ring, by $\F_{\wp} := {\cal{O}}_{\wp}/{\cal{M}}_{\wp}$ the
associated residue field, by $\deg_{K} \wp := [\F_{\wp} : \F_K]$ the degree of $\wp$ in $K$, 
and by $\overline{\F}_{\wp}$ an algebraic closure of $\F_{\wp}$.
We denote the prime  $\wp \cap A$ of $A$ by $\mathfrak{p}$, and we denote by $p$ the monic generator of $\mathfrak{p}.$  
 We set  $m_{\wp} := [\F_{\wp} : A/\mathfrak{p} ]$ and record the  diagram:
 \begin{equation} \label{picture-residue-field} 
 \xymatrix{
& \F_{\wp} := \mathcal{O}_{\wp} / \mathcal{M}_{\wp} \ar@{-}[dl]_{\deg_K \wp} \ar@{-}[dr]^{m_{\wp}} & \\
\F_K = \F_{q^{c_K}} \ar@{-}[dr]_{c_K} && \F_{\fp} := A / \mathfrak{p} = \F_{q^{\deg p}} \ar@{-}[dl]^{\deg p} \\
& \F_q & 
}  
\end{equation}
Hence the relationship between the $|\cdot|_{\infty}$-norm of a
prime $\wp$ of $K$ and its associated prime $\fp = pA$ in $k$ is
$$|\wp|_{\infty}= \#\F_K = |p|_{\infty}^{m_{\wp}}.$$

Finally, for a finite Galois extension $K'$ of $K$, we write $\sigma_{\wp}$ for the Artin symbol (``the Frobenius'') at $\wp$ in $K'/K$.

\section{Drinfeld modules}

\subsection{Basic definitions}

Let $(L, \delta)$ be an $A$-field. A {\bf{Drinfeld $A$-module over $L$}} is an $\F_q$-algebra homomorphism
$$
\psi : A \longrightarrow L\{\tau\}
$$
$$
a \longmapsto \psi_a
$$
such that:
\begin{enumerate}
\item[(1)]
for all $a \in A$, $D(\psi_a) = \delta(a)$, where $D : L\{\tau\} \longrightarrow L$,
$D\left(\ds\sum_{0 \leq i \leq n} c_i
\tau^i\right) = c_0$ is the differentiation with respect to $x$ map;
\item[(2)]
$\Im \psi \not\subseteq L$.
\end{enumerate}

A homomorphism $\psi$ as above induces a nontrivial $A$-module structure on $L$, or, more generally,  on
any $L$-algebra $\Omega$; we denote this structure by $\psi(\Omega)$.

Associated to a Drinfeld $A$-module $\psi$ over $L$ we have two important invariants, called the rank and the height.
We define the  {\bf{rank}} of $\psi$ as the unique positive integer $r$ such that
$$
\deg_{\tau}(\psi_a) = r \deg a \; \;  \forall a \in A.
$$
If $L$ has generic $A$-characteristic, we define the {\bf{height}} of $\psi$ as zero. If $L$ is of
finite $A$-characteristic $\mathfrak{p} = pA$, we define the {\bf{height}} of $\psi$ as the unique positive integer $h$ such
that
$$
\min\{0 \leq i \leq r \deg a: c_{i, a}(\psi) \neq 0\} = h \ord_{\mathfrak{p}}(a) \deg p
\; \; \forall a \in A, a \neq 0,
$$
where
$$\psi_a = \ds\sum_{0 \leq i \leq r \deg a} c_{i, a}(\psi) \tau^i$$
 and $\ord_{\fp}(a) := t$ with $p^t || a$.

For the purpose of this paper, the rank and the height are particularly relevant in determining the structure
(\ref{elem-div-dm}) of the reductions modulo primes of a Drinfeld $A$-module in generic characteristic.

\subsection{Endomorphism rings}

Let $(L, \delta)$ be an $A$-field.
Given  $\psi, \psi' : A \longrightarrow L\{\tau\}$ Drinfeld $A$-modules over $L$, a
{\bf{morphism from $\psi$ to $\psi'$ over $L$}} is an element $f \in L\{\tau\}$ such that
$$
f \psi_a = \psi'_a f \; \; \forall a \in A.
$$
An {\bf{isogeny from $\psi$ to $\psi'$ over $L$}} is a non-zero morphism as above. 
An {\bf{isomorphism from $\psi$ to $\psi'$ over $L$}} is an element $f \in L^{\ast}$ such that $f \psi_a = \psi'_a f \;
\; \forall a \in A$. 
Finally, $\End_{L}(\psi)$ and
$\End_{\overline{L}}(\psi)$ are the rings of endomorphisms of $\psi$ over $L$ and over $\overline{L}$, respectively.

We remark that
$$
\psi(A) \subseteq \End_{L}(\psi) \subseteq \End_{\overline{L}}(\psi)
$$
and that isogenous Drinfeld modules have the same rank and height.

For ease of notation, we shall henceforth denote the category of Drinfeld $A$-modules over $L$ 
(with a fixed $A$-field structure $(L, \delta)$)
by
$$\Drin_{A}(L).$$

Note that, in the setting of our main theorems,  $K \supseteq k$ a fixed field extension and $\psi: A \longrightarrow K\{\tau\}$ a generic Drinfeld $A$-module over $K$, we are implicitly working with  the structure on $K$ arising from the injective homomorphism $D \circ \psi : A \longrightarrow K$. 

\begin{remark}\label{general-dm}
{\emph{
The category $\Drin_A(L)$ of Drinfeld $A$-modules over $L$ may be defined in greater generality.  Indeed, we may fix an arbitrary function field ${\mathscr{K}}$ and a prime $\infty$ of ${\mathscr{K}}$.  We then take ${\mathscr{A}}$ as the ring of functions on ${\mathscr{K}}$ regular away from $\infty$ and define the category $\Drin_{\mathscr{A}}(\mathscr{L})$ of Drinfeld ${\mathscr{A}}$-modules over ${\mathscr{A}}$-fields ${\mathscr{L}}$ exactly as we did above.
}}
\end{remark}

The endomorphism rings introduced here have important properties, such as:
\begin{theorem}( \cite[Prop. 4.7.6 p. 80, Theorem 4.7.8 p. 81, Prop. 4.7. 17 p. 84]{Go}, \cite[Theorem 2.7.2, p. 50]{Th}) 
\label{end-ring-theorem2}

\noindent
Let $(L,\delta)$ be an $A$-field with generic $A$-characteristic.  Let $\psi \in \Drin_A(L)$ be of rank $r \geq 1$.  Then:
\begin{enumerate}
\item[(i)]
$\End_{\overline{L}}(\psi)$ is commutative;
\item[(ii)]
$\End_{\overline{L}}(\psi)$ is a finitely generated projective $A$-module of rank at most $r$;
\item[(iii)]
if we denote by \[ k' := \End_{\overline{L}}(\psi) \otimes_A k,\]  then $k'$ is a finite field extension of $k$ satisfying $[k':k] \leq r$.
\end{enumerate}
\end{theorem}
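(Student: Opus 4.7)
The plan is to establish (i) first by producing an embedding of $R := \End_{\overline{L}}(\psi)$ into the field $\overline{L}$, and then to deduce (ii) and (iii) from (i) via the standard Tate module formalism.

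For (i), consider the ``derivative at zero'' map $D : \overline{L}\{\tau\} \to \overline{L}$ sending $\sum_i c_i \tau^i \mapsto c_0$. A direct computation using the twisted multiplication rule $\tau c = c^q \tau$ shows $D(fg) = D(f) D(g)$ on all of $\overline{L}\{\tau\}$, so $D$ is a ring homomorphism and restricts to a ring homomorphism $D : R \to \overline{L}$. The crux is to show that $D|_R$ is injective. Suppose for contradiction that $f = c_i \tau^i + (\textrm{higher terms}) \in R$ with $c_i \neq 0$ and $i \geq 1$. Writing $\psi_a = \delta(a) + (\textrm{higher terms})$ and equating the $\tau^i$-coefficient on both sides of $f \psi_a = \psi_a f$ gives $c_i\, \delta(a)^{q^i} = \delta(a)\, c_i$, so $\delta(a)^{q^i} = \delta(a)$ for every $a \in A$. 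Because $(\overline{L}, \delta)$ has generic $A$-characteristic, $\delta$ is injective and $\delta(A)$ is infinite; this contradicts the fact that $x^{q^i} = x$ has at most $q^i$ solutions in $\overline{L}$. Hence $D$ embeds $R$ into the field $\overline{L}$, and (i) is immediate.

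For (ii) and (iii), set $k' := R \otimes_A k$, a commutative $k$-algebra that is an integral domain by (i). The main step is a bound on $[k':k]$ via the Tate module. For any prime $\mathfrak{l}$ of $A$, the Tate module $T_{\mathfrak{l}}(\psi) := \varprojlim_n \psi[\mathfrak{l}^n]$ is free of rank $r$ over $A_{\mathfrak{l}}$, and $R$ acts faithfully on it, since any nonzero $f \in R$ has finite kernel as a map $\overline{L} \to \overline{L}$ and must therefore act nontrivially on $\psi[\mathfrak{l}^n]$ for $n$ sufficiently large. Tensoring with $k_{\mathfrak{l}}$ produces a faithful action of the commutative $k_{\mathfrak{l}}$-algebra $k' \otimes_k k_{\mathfrak{l}}$ on the $r$-dimensional $k_{\mathfrak{l}}$-vector space $V_{\mathfrak{l}}(\psi) := T_{\mathfrak{l}}(\psi) \otimes_{A_{\mathfrak{l}}} k_{\mathfrak{l}}$, forcing $\dim_{k_{\mathfrak{l}}}(k' \otimes_k k_{\mathfrak{l}}) \leq r$ and hence $[k':k] \leq r$. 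In particular $k'$ is a finite-dimensional integral domain over $k$, hence a field, which is (iii). For (ii), $R \subseteq k'$ then lies in the integral closure of $A$ in the finite extension $k'/k$, which is finitely generated over the Dedekind domain $A$; so $R$ is finitely generated over $A$, and torsion-freeness (inherited from sitting inside the field $k'$) together with $A$ being Dedekind upgrades this to projectivity of $A$-rank at most $r$.

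The principal obstacle is the injectivity of $D|_R$, which is where generic $A$-characteristic is genuinely needed: in finite $A$-characteristic, Frobenius-type endomorphisms with $D(f) = 0$ do exist, the endomorphism ring can fail to be commutative, and its $A$-rank can be as large as $r^2$. The remaining ingredients, namely faithfulness of the Tate module representation and the finite generation/projectivity deductions, reduce to routine commutative algebra over the Dedekind domain $A$ once $k'$ has been recognized as a field.
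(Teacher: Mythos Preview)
The paper does not prove this theorem; it merely cites \cite{Go} and \cite{Th}. So there is no ``paper's proof'' to compare against, and your argument must stand on its own.

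Your proof of (i) is correct and is exactly the standard argument: the derivative map $D$ is multiplicative, and comparing lowest-order terms in $f\psi_a=\psi_a f$ forces $\delta(a)^{q^i}=\delta(a)$ for all $a\in A$, which is impossible in generic characteristic since $\delta(A)$ is infinite.

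There is, however, a genuine gap in your treatment of (ii)--(iii). You write that the faithful action of the commutative $k_{\mathfrak l}$-algebra $k'\otimes_k k_{\mathfrak l}$ on the $r$-dimensional space $V_{\mathfrak l}(\psi)$ ``forces $\dim_{k_{\mathfrak l}}(k'\otimes_k k_{\mathfrak l})\le r$.'' This implication is false in general: a commutative subalgebra of $M_r(F)$ can have $F$-dimension strictly larger than $r$ (for instance, in $M_4(F)$ the algebra of block matrices $\bigl(\begin{smallmatrix} aI_2 & B\\ 0 & aI_2\end{smallmatrix}\bigr)$ with $a\in F$ and $B\in M_2(F)$ is commutative of dimension $5$). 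What you need is that $k'\otimes_k k_{\mathfrak l}$ is a \emph{product of fields}; then $V_{\mathfrak l}=\bigoplus_i V_i$ with each $V_i$ a nonzero vector space over the $i$-th factor $L_i$, and summing $\dim_{k_{\mathfrak l}}V_i\ge [L_i:k_{\mathfrak l}]$ gives the bound.

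The fix is not difficult but must be stated. First, the injection $k'\otimes_k k_{\mathfrak l}\hookrightarrow M_r(k_{\mathfrak l})$ already gives $[k':k]\le r^2<\infty$, so $k'$ is a finite integral domain over $k$, hence a field. Then one observes that $k_{\mathfrak l}$ is a \emph{separable} extension of $k$ in the sense of Mac Lane (equivalently, $k_{\mathfrak l}\otimes_k k^{1/p}$ is reduced, which holds because $T\notin k_{\mathfrak l}^p$ for every place $\mathfrak l$); consequently $k'\otimes_k k_{\mathfrak l}$ is a reduced Artinian $k_{\mathfrak l}$-algebra, i.e.\ a product of fields, and your intended inequality follows. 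Alternatively, the route taken in \cite{Go} avoids this issue entirely by using the analytic lattice $\Lambda$ of rank $r$ over $A$ in place of the Tate module: $k'$ then acts faithfully on the $r$-dimensional $k$-vector space $\Lambda\otimes_A k$, and since $k'$ is a field this immediately gives $[k':k]\mid r$. Once the bound $[k':k]\le r$ is secured, your deduction of (ii) via integrality in $k'$ and projectivity over the Dedekind ring $A$ is fine.
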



\subsection{Division points}

Let $(L, \delta)$ be an $A$-field and let $\psi \in \Drin_{A}(L)$. Let $a \in A \backslash \F_q$. We define the
{\bf{$a$-division module of $\psi$}} by
$$
\psi[a] :=\left\{\lambda \in {\overline{L}}: \psi_a(\lambda) = 0\right\}.
$$
When $a = \ell$ is irreducible, we define the {\bf{$\ell^{\infty}$-division module of $\psi$}} by
$$
\psi\left[\ell^{\infty}\right] 
:= \ds\bigcup_{n \geq 1} \psi\left[\ell^n\right].
$$

Note that $\psi[a]$ is a torsion $A$-module via $\psi$. As we recall below, its $A$-module structure is well
determined by $a$ and $\psi$.
\begin{theorem} \label{structure-theorem-torsion-points} \cite[Theorem 13.1 p. 221]{Ro}

\noindent
Let $(L, \delta)$ be an $A$-field with $A$-characteristic $\mathfrak{p}$ (possibly zero). Let $\psi \in \Drin_{A}(L)$
be of rank $r \geq 1$ and height $h$.
 Let $\fl \neq \fp$ be a non-zero prime ideal of $A$ with $\fl = \ell A$,  and let  $e \geq 1$ be an integer. Then
$$
\psi[\ell^e] \simeq_{A} (A/\ell^e A)^{r}.
$$
If $\fp = pA$ is non-zero, then \[\psi[p^e] \simeq_A (A / p^eA)^{r-h}.\]
\end{theorem}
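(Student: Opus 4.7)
The plan is to compute $\#\psi[\ell^e]$ and $\#\psi[p^e]$ by analyzing the separability/inseparability of the additive polynomials $\psi_{\ell^e}(x)$ and $\psi_{p^e}(x)$ in $L[x]$, and then to feed these cardinalities into the elementary divisor theorem over the local principal ideal rings $A/\ell^e A$ and $A/p^e A$ to pin down the isomorphism type from the cardinality alone.

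First I would record the separability criterion. For $a \in A$, writing $\psi_a = \sum_i c_{i,a}(\psi) \tau^i \in L\{\tau\}$, the associated additive polynomial $\sum_i c_{i,a}(\psi) x^{q^i} \in L[x]$ has formal derivative equal to its constant term $c_{0,a}(\psi) = D(\psi_a) = \delta(a)$. Hence $\psi_a(x)$ is separable precisely when $a \notin \Ker \delta$, that is, precisely when $a \notin \fp$. In the case $\fl \neq \fp$ one has $\ell^e \notin \fp$, so $\psi_{\ell^e}(x)$ is separable of $x$-degree $q^{re\deg\ell}$, yielding $\#\psi[\ell^e] = q^{re\deg \ell}$. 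Since $\ell^e$ annihilates $\psi[\ell^e]$, this module is finitely generated over the local principal ideal ring $A/\ell^e A$; the elementary divisor theorem produces $\psi[\ell^e] \simeq_A \bigoplus_{i=1}^s A/\ell^{e_i} A$ with $1 \leq e_i \leq e$. Applying the cardinality computation with $e = 1$ identifies $s$ with the $A/\ell A$-dimension of $\psi[\ell]$, which equals $r$; matching $\sum_i e_i \deg \ell = re \deg \ell$ under the bounds $e_i \leq e$ then forces each $e_i = e$, so $\psi[\ell^e] \simeq_A (A/\ell^e A)^r$.

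For $\fp = pA$ of positive characteristic with height $h$, the definition of $h$ puts the bottom nonzero $\tau$-coefficient of $\psi_p$ in degree $h \deg p$. Extracting $q^{h \deg p}$-th roots of the coefficients over $\overline{L}$, one may write $\psi_p(x) = f(x)^{q^{h \deg p}}$ with $f$ additive and separable of $x$-degree $q^{(r-h)\deg p}$, giving $\#\psi[p] = q^{(r-h)\deg p}$. For $e \geq 2$ I would induct via $\psi_{p^e} = \psi_p \cdot \psi_{p^{e-1}}$ in $L\{\tau\}$: using the twisted multiplication $\tau c = c^q \tau$, the bottom nonzero $\tau$-term of the product sits in degree $eh \deg p$ with coefficient of the form $c \cdot (c')^{q^{h \deg p}}$, where $c, c'$ are the bottom nonzero coefficients of $\psi_p$ and $\psi_{p^{e-1}}$ respectively; this is nonzero in the field $L$. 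Consequently the separable/inseparable split gives $\#\psi[p^e] = q^{e(r-h)\deg p}$, and rerunning the structure-theorem argument over $A/p^e A$ yields $\psi[p^e] \simeq_A (A/p^e A)^{r-h}$. The main obstacle is this inductive tracking of the lowest $\tau$-coefficient under twisted multiplication: it is precisely the mechanism by which the height $h$ trims the free rank of the torsion module from $r$ down to $r-h$, and it is the one step where the noncommutativity of $L\{\tau\}$ enters in an essential way.
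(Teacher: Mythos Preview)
The paper does not supply its own proof of this statement; it is quoted from Rosen's textbook with a bare citation. Your argument is correct and is essentially the standard proof one finds there: count roots via separability (using $D(\psi_a)=\delta(a)$ for the derivative), then use the elementary divisor theorem over the local ring $A/\ell^eA$ (respectively $A/p^eA$) together with the constraint $e_i\le e$ to force all $e_i=e$.

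One phrasing point worth tightening: when you write that the $e=1$ computation ``identifies $s$ with the $A/\ell A$-dimension of $\psi[\ell]$,'' the link you are implicitly using is that $\psi[\ell]$ is precisely the $\ell$-torsion submodule of $\psi[\ell^e]$, and for $\bigoplus_{i=1}^s A/\ell^{e_i}A$ the $\ell$-torsion is $(A/\ell A)^s$. That is routine, but stating it explicitly removes any ambiguity about why the number of summands for $\psi[\ell^e]$ agrees with the number for $\psi[\ell]$. Also note that under the paper's definition of height the lowest nonzero $\tau$-coefficient of $\psi_{p^e}$ already sits in degree $eh\deg p$ by fiat, so your inductive verification via twisted multiplication, while correct, is redundant relative to that definition (it is, of course, exactly how one shows such an $h$ exists in the first place).
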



\begin{corollary}\label{div-module}

\noindent
Let $(L, \delta)$ be an $A$-field with $A$-characteristic $\mathfrak{p}$ (possibly zero).
Let $\psi \in \Drin_{A}(L)$ be of rank $r \geq 1$ and height $h$.
Let $a \in A \backslash \F_q$ and write the ideal $aA$ (uniquely) as the product of ideals
$\mathfrak{a}_1,\mathfrak{a}_2$ of $A$ such that $\mathfrak{a}_1$ is relatively prime to $\fp$ and $\mathfrak{a}_2$ is composed of prime divisors of $\fp$.
Then
\begin{equation*}
\psi[a]
\simeq_A
(A/\mathfrak{a}_1)^r \oplus (A / \mathfrak{a}_2)^{r-h}
\leq_A (A / a A)^r.
\end{equation*}
\end{corollary}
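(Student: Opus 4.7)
The plan is to reduce the statement to the prime-power case already handled by Theorem \ref{structure-theorem-torsion-points}, by using a Chinese Remainder Theorem style decomposition of $\psi[a]$ into its primary components.

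First, I would write the monic part of $a$ in its prime factorization as $\prod_i \ell_i^{e_i}$, with the $\ell_i \in A^{(1)}$ distinct monic irreducibles and $e_i \geq 1$. Since units in $\F_q^{\ast}$ act invertibly, $\psi[a]$ equals $\psi[\prod_i \ell_i^{e_i}]$. I would then establish the key decomposition
\[
\psi\!\left[\ds\prod_i \ell_i^{e_i}\right]
\;\simeq_A\;
\ds\bigoplus_i \psi[\ell_i^{e_i}],
\]
by an inductive application of the following two-factor case: if $b, c \in A \setminus \{0\}$ with $\gcd(b,c) = 1$, then $\psi[bc] = \psi[b] \oplus \psi[c]$ as $A$-modules. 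The proof of this two-factor case is a standard B\'{e}zout argument: pick $u, v \in A$ with $ub + vc = 1$, so that for any $x \in \psi[bc]$ one has $x = \psi_{ub}(x) + \psi_{vc}(x)$ with $\psi_{ub}(x) \in \psi[c]$ and $\psi_{vc}(x) \in \psi[b]$, while any $y \in \psi[b] \cap \psi[c]$ satisfies $y = \psi_{ub+vc}(y) = 0$.

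Next, I would group the indices into the two classes distinguished by the characteristic: those $i$ with $\ell_i \neq p$ (whose contribution assembles into $\fa_1$) and, if $\fp \neq (0)$ and $p \mid a$, the unique index with $\ell_i = p$ (whose contribution is $\fa_2$). Applying Theorem \ref{structure-theorem-torsion-points} to each primary piece gives
\[
\psi[\ell_i^{e_i}] \simeq_A (A/\ell_i^{e_i} A)^r \quad \text{if } \ell_i \neq p,
\qquad
\psi[p^{e_p}] \simeq_A (A/p^{e_p} A)^{r-h} \quad \text{if } \ell_i = p.
\]
Reassembling via CRT on $A$ (i.e., $A/\fa_1 \simeq \prod_{\ell_i \neq p} A/\ell_i^{e_i} A$, and likewise trivially for $\fa_2$) yields $\psi[\fa_1] \simeq_A (A/\fa_1)^r$ and $\psi[\fa_2] \simeq_A (A/\fa_2)^{r-h}$, hence the claimed isomorphism $\psi[a] \simeq_A (A/\fa_1)^r \oplus (A/\fa_2)^{r-h}$.

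Finally, for the inclusion into $(A/aA)^r$, I would invoke CRT once more to identify $(A/aA)^r \simeq_A (A/\fa_1)^r \oplus (A/\fa_2)^r$, and note that $(A/\fa_2)^{r-h}$ embeds into $(A/\fa_2)^r$ as the submodule on the first $r-h$ coordinates. No step is a serious obstacle; the only point that demands care is verifying the two-factor coprime decomposition of $\psi[bc]$ in the $A$-module sense (as opposed to the $\Z$-module sense), but this is immediate from B\'{e}zout once one recognises $\psi_a$ as the action of $a \in A$ on $\psi(\overline{L})$. The cases $\fp = (0)$ or $p \nmid a$ are subsumed: in those cases $\fa_2 = A$ and the second summand vanishes, giving $\psi[a] \simeq_A (A/aA)^r$, consistent with $h = 0$ in the generic characteristic setting.
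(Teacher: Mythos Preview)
Your proposal is correct and is precisely the natural deduction the paper has in mind: the corollary is stated without proof, as an immediate consequence of Theorem~\ref{structure-theorem-torsion-points}, and the intended argument is exactly the Chinese Remainder/B\'{e}zout decomposition of $\psi[a]$ into its primary pieces that you wrote out. There is nothing to add or correct.
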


\begin{remark} \label{remark-after-structure-theorem}
{\emph{
Theorem \ref{structure-theorem-torsion-points} and  Corollary \ref{div-module} hold in greater generality.  In particular, the results hold for a Drinfeld module
$\psi \in \Drin_{\mathscr{A}}({\mathscr{L}})$, where ${\mathscr{A}}$ is the ring of functions on an arbitrary function field ${\mathscr{K}}$ which are regular away from some fixed prime in ${\mathscr{K}}$, and where
${\mathscr{L}}$ is any ${\mathscr{A}}$-field.
}}
\end{remark}

\subsection{Reductions modulo primes}\label{subsection-reductions-modulo-primes}

Let $(L, \delta)$ be an $A$-field and let $\psi \in \Drin_{A}(L)$ be of rank $r \geq 1$.
Let $\wp$ be a prime of $L$.

We say that {\bf{$\psi$ has integral coefficients at $\wp$}} if:
\begin{enumerate}
\item[(1)]
$\psi_a \in {\cal{O}}_{\wp}\{\tau\} \; \; \forall a \in A$;
\item[(2)]
$\psi \otimes \F_{\wp} : A \longrightarrow \F_{\wp}\{\tau\}$, defined by $a \mapsto \psi_a (\mod \wp)$, is a
Drinfeld $A$-module over $\F_{\wp}$ (of some  rank $0 < r_1 \leq r$).
\end{enumerate}
In this case, we also say that $\psi \otimes \F_{\wp}$ is the {\bf{reduction of $\psi$ modulo $\wp$}}.

We say that {\bf{$\psi$ has good reduction at $\wp$}} if there exists $\psi' \in \Drin_{A}(L)$ such that:
\begin{enumerate}
\item[(1)]
$\psi' \simeq_{K} \psi$;
\item[(2)]
$\psi'$ has integral coefficients at $\wp$;
\item[(3)]
$\psi' \otimes \F_{\wp}$ has rank $r$.
\end{enumerate}

For the remainder of this subsection, we assume that $L = K$ is a finite field extension of $k$
and that $\psi \in \Drin_{A}(K)$ has generic characteristic.
There are only finitely many primes of $K$ which are {\it{not}} of good
reduction for $\psi \in \Drin_{A}(K)$. 
As in Section 1, we let ${\cal{P}}_{\psi}$ denote the set of (finite) primes of $K$ of good reduction for 
$\psi$.

Note that, for a prime $\wp \in {\cal{P}}_{\psi}$, Corollary \ref{div-module} gives the structure (\ref{elem-div-dm}) of the $A$-module $\psi(\F_{\wp})$.  Indeed, $\psi(\F_{\wp})$ is a finite $A$-module, and since $A$ is a PID, there exist unique polynomials $d_{1,\wp}(\psi),d_{2,\wp}(\psi),\ldots,d_{s,\wp}(\psi) \in A^{(1)}$ such that 
\[
\psi(\F_{\wp}) 
\simeq_A 
A / d_{1,\wp}(\psi) A \times \ldots \times A / d_{s,\wp}(\psi) A,
\] 
with $d_{i,\wp}(\psi)|d_{i+1,\wp}(\psi)$ for all $i = 1,\ldots,s-1$.  
That $s = r$ follows from the fact that $\psi(\F_{\wp})$ is a torsion module, 
and hence, by Corollary \ref{div-module},
from  the existence of  some  $a \in A \backslash \F_q$ such that
$\psi(\F_{\wp}) \leq _A \psi[a] \leq_A (A / aA)^r$.

The following analogue of the criterion of N\'{e}ron-Ogg-Shafarevich for elliptic curves holds:

\begin{theorem} \label{takahashi} \cite[Theorem 1, p. 477]{Tak}

\noindent
Let $K$ be a finite extension of $k$. 
Let $\psi \in \Drin_A(K)$ be of generic characteristic. 
Let $\wp$ be a prime of $K$ and let  $\fl = \ell A$ be  a prime ideal different from $\fp := \wp \cap A$.  Then $\psi$  has good reduction at $\wp$ if and only if the Galois module $\displaystyle \psi[\ell^{\infty}]$ is unramified at $\wp$. Moreover,  if $\psi$ has rank 1, then $\displaystyle \psi[\ell^{\infty}] $ is totally ramified at $\fl$.
  \end{theorem}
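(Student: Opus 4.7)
The plan is to handle the two directions of the equivalence separately and then the rank-$1$ ``moreover''. For the forward direction, assume $\psi$ has good reduction at $\wp$, and pick a $K$-isomorphic model with $\psi_a \in \OO_\wp\{\tau\}$ and $\overline{\psi} := \psi \otimes \F_\wp$ of rank $r$. The hypothesis $\fl \neq \fp$ ensures that $\overline{\psi}_{\ell^n}$ has nonzero constant term and is therefore separable in $\tau$ for every $n$, and that $\overline{\psi}$ has $A$-characteristic $\fp \neq \fl$, so Theorem \ref{structure-theorem-torsion-points} gives both $\psi[\ell^n]$ and $\overline{\psi}[\ell^n]$ order $|\ell|_\infty^{nr}$. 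Reduction then induces an injective, hence bijective, $A$-linear $G_{K_\wp}$-equivariant map $\psi[\ell^n] \longrightarrow \overline{\psi}[\ell^n]$; since the Galois action on the right factors through the unramified quotient $G_{\F_\wp}$, the inertia subgroup $I_\wp$ acts trivially on $\psi[\ell^n]$ for every $n$, proving that $\psi[\ell^\infty]$ is unramified at $\wp$.

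For the reverse direction, which is the substantive N\'{e}ron--Ogg--Shafarevich analogue and where the main obstacle lies, I would pass to the completion at $\wp$ and invoke Drinfeld's analytic (``Tate-type'') uniformization: after a finite unramified base change, $\psi$ admits a presentation $\psi \simeq \psi^{(0)}/\Lambda$, where $\psi^{(0)} \in \Drin_A(\OO_\wp)$ has good reduction of some rank $r_0 \leq r$, and $\Lambda \leq_A \psi^{(0)}(\overline{K_\wp})$ is a discrete Galois-stable $A$-lattice of rank $r - r_0$. This yields an exact sequence of Galois modules
\[
0 \longrightarrow \psi^{(0)}[\ell^n] \longrightarrow \psi[\ell^n] \longrightarrow \Lambda/\ell^n\Lambda \longrightarrow 0,
\]
whose outer terms are unramified (the left by the forward direction applied to $\psi^{(0)}$; the right because $\Lambda$ is defined over an unramified extension). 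The inertia action on the middle is then classified by a Kummer-type $1$-cocycle sending $\sigma \in I_\wp$ to a chosen $\ell^n$-th division point of a period $\lambda \in \Lambda$. The hypothesis that all of $\psi[\ell^\infty]$ is unramified forces this cocycle to vanish compatibly in $n$, which by the torsion-freeness and discreteness of $\Lambda$ forces $\Lambda = 0$; hence $r_0 = r$ and $\psi$ has good reduction at $\wp$. The hard part is precisely the setup of the uniformization and the $G_{K_\wp}$-equivariant descent from the unramified extension back to $K_\wp$, which is the technical content of the cited theorem of Takahashi.

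For the rank-$1$ ``moreover'', Theorem \ref{end-ring-theorem2}(ii) gives $\End_{\overline{K}}(\psi) = A$, and over $\overline{K}$ the module $\psi$ is isomorphic to a sign-normalized Carlitz-type rank-$1$ module. The Galois action on $\psi[\ell^n] \simeq_A A/\ell^n A$ is by a character $\chi_n \colon G_K \longrightarrow (A/\ell^n A)^\ast$. Localizing at any prime of $K$ above $\fl$ reduces the ramification analysis to the classical local computation for the Carlitz module: the additive polynomial $\psi_\ell(X)/X$ is, up to a local unit twist reflecting the chosen normalization, Eisenstein at $\fl$ (its constant term is $\ell$, and its intermediate coefficients lie in $\fl$ by induction on the degree), so its Newton polygon exhibits a single slope and adjoining one primitive $\ell^n$-division point produces a totally ramified extension of degree $\phi_A(\ell^n)$; the remaining $\ell^n$-division points lie in this extension by $A$-linearity. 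Taking the inverse limit over $n$ yields total ramification of $K(\psi[\ell^\infty])/K$ at $\fl$.
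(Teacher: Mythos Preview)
The paper does not prove this theorem; it is cited from \cite{Tak}, with only the remark that the rank-$1$ clause ``can be derived from the proof'' there. So there is no in-paper argument to compare your proposal against. That said, your outline of the main equivalence is a faithful sketch of Takahashi's own method: the forward direction is correct as written, and the converse via Drinfeld's Tate-type uniformization $\psi \simeq \psi^{(0)}/\Lambda$ together with the associated Kummer cocycle on inertia is precisely the content of \cite{Tak}. The step you compress into ``torsion-freeness and discreteness of $\Lambda$ forces $\Lambda = 0$'' is where the real work sits: one shows that a nonzero period $\lambda \in \Lambda$ has negative $\wp$-adic valuation, whence by the Newton polygon of $\psi^{(0)}_{\ell^n}(X) - \lambda$ every $\ell^n$-preimage of $\lambda$ has valuation $v(\lambda)/q^{\,n r_0 \deg \ell}$, eventually non-integral and hence outside $K_\wp^{\mathrm{nr}}$. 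Your acknowledgment that this is ``the technical content of the cited theorem'' is therefore apt.

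Your rank-$1$ argument, however, has a genuine gap. The claim that $\psi_\ell(X)/X$ is Eisenstein at $\fl$ (even ``up to a local unit twist'') holds for the Carlitz module over $k$ but fails for a general rank-$1$ $\psi$. Already over $K = k$: take $\psi_T = T + T\tau$ and $\ell = T$. Then $\psi_T(X)/X = T(1 + X^{q-1})$, whose nonzero roots are $(q-1)$-st roots of $-1$ and hence lie in a \emph{constant-field} extension of $k$; thus $k(\psi[T])/k$ is unramified at $(T)$ and no Eisenstein or Newton-polygon argument over $k$ can produce total ramification there. The ``local unit twist'' does not help, since the isomorphism to Carlitz is given by $c$ with $c^{q-1} = T$, which is not a unit at $(T)$; any twist by a genuine local unit leaves the leading coefficient in the maximal ideal. (The same example shows that the ``moreover'' as literally stated---\emph{totally} ramified at every prime over $\fl$---needs an additional hypothesis, for instance that $\psi$ have good reduction at the primes of $K$ above $\fl$; what survives in general, and what can indeed be read off from Takahashi's uniformization argument, is that the image of inertia in $A_\fl^\ast$ is open.)
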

Note that while the last assertion of the theorem is not stated explicitly in \cite[Theorem 1, p. 477]{Tak}, it can be derived from the proof.

\begin{remark} \label{remark-after-takahashi} 
\emph{
The notion of good reduction can be introduced  for a  general $\psi \in \Drin_{\mathscr{A}}({\mathscr{L}})$, where ${\mathscr{A}}$ is the ring of functions on an arbitrary function field ${\mathscr{K}}$ which are regular away from some fixed prime in ${\mathscr{K}}$, and ${\mathscr{L}}$ is a generic ${\mathscr{A}}$-field. Theorem \ref{takahashi} holds in this general setting also.
} 
\end{remark}

\subsection{Division fields}\label{subsection-division-fields}

Let $K$ be a finite field extension of $k$ and let  $\psi \in \Drin_{A}(K)$ be of generic characteristic.  
For  $a \in A$, we define
the {\bf{$a$-division field of
$\psi$}} as  $K(\psi[a])$.
This is a  Galois extension of $K$ which plays a crucial role in our study of the elementary
divisors of the reductions of $\psi$.  We denote the genus of $K(\psi[a])$ by $g_a$ and the  degree of the constant field of $K(\psi[a])$ over $\F_K$ by $c_a$, that is,
\begin{equation} \label{c-a}
c_a := [K(\psi[a]) \cap \overline{\F}_K : \F_K].
\end{equation}
Below are important properties of these division fields.

\begin{proposition} \label{constant-field-size-proposition} \cite[Remark 7.1.9, p. 196]{Go}

\noindent
Let $K$ be a finite field extension of $k$ and let  $\psi \in \Drin_{A}(K)$ be of generic characteristic.  
Let
$$ K_{\psi, \text{tors}} := \ds\bigcup_{a \in A \backslash \F_q} K(\psi[a]). $$
Then
$$[K_{\psi, \text{tors}} \cap \overline{\F}_K : \F_K] < \infty.$$
In particular, there exists a constant $C(\psi,K) \in \N$, depending on $K$ and $\psi$,  such that,
for any $a \in A \backslash \F_q$,
$$c_a \leq C(\psi,K).$$
\end{proposition}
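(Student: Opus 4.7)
The strategy is to reformulate the finiteness of $[K_{\psi,\text{tors}} \cap \overline{\F}_K : \F_K]$ as the uniform boundedness of the degrees $c_a$, and to obtain the bound via the Galois representation on torsion together with the characteristic polynomial of Frobenius at a prime of good reduction.

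First I would observe that $L := K_{\psi,\text{tors}} \cap \overline{\F}_K \cdot K$ is the directed union of the constant-field subextensions $K(\psi[a]) \cap \overline{\F}_K K = \F_{q^{c_K c_a}} K$ as $a$ varies over $A \setminus \F_q$. Since $\Gal(\overline{\F}_K K / K) \cong \hat{\Z}$, the extension $L/K$ is finite if and only if the set $\{c_a : a \in A \setminus \F_q\}$ is bounded; when bounded, $C(\psi,K) := \sup_a c_a$ serves as the constant in the ``in particular'' assertion. The two claims of the proposition are thus equivalent, and it suffices to prove the uniform bound.

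By Theorem \ref{structure-theorem-torsion-points}, the generic $A$-characteristic assumption gives $\psi[a] \simeq_A (A/aA)^r$, so the Galois action yields an embedding $\rho_a : \Gal(K(\psi[a])/K) \hookrightarrow \GL_r(A/aA)$. The constant-field quotient $\Gal(K(\psi[a])/K) \twoheadrightarrow \Z/c_a\Z$ is abelian and therefore factors through the determinant $\det \rho_a : \Gal(K(\psi[a])/K) \to (A/aA)^{\ast}$. Now pick a prime $\wp_0$ of $K$ of good reduction for $\psi$ (cofinitely many primes qualify, as recalled before Theorem \ref{takahashi}). For any $a$ coprime to $p_0$ where $p_0 A = \wp_0 \cap A$, Theorem \ref{takahashi} guarantees that $K(\psi[a])/K$ is unramified at $\wp_0$ and that $\sigma_{\wp_0}$ acts on $\psi[a]$ via the reduction of the Frobenius endomorphism $\pi_{\wp_0} \in \End(\psi \otimes \F_{\wp_0})$, whose characteristic polynomial $P_{\wp_0}(X) \in A[X]$ has degree $r$ and is independent of $a$. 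In particular, $\det \pi_{\wp_0}$ is a fixed element of $A$ (up to sign and an $\F_q^{\ast}$-scalar), and its image in $\Z/c_a\Z$ under the constant-field character must equal the Frobenius class $\deg_K \wp_0 \pmod{c_a}$, yielding a constraint tying $c_a$ to this fixed element.

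The main obstacle is to convert this pointwise constraint into a uniform bound on $c_a$. I would combine the constraint at $\wp_0$ with an analogous one at a second prime $\wp_1$ of good reduction with $\gcd(\deg_K \wp_0, \deg_K \wp_1) = 1$, whose existence follows from the effective Prime Number Theorem \eqref{PNT} for $K$. The cleanest way to complete the argument is to interpret $\det \rho_a$ as the Galois representation attached to a rank-$1$ object derived from $\psi$ (an analogue of Anderson's top wedge-power construction), for which the Carlitz--Hayes rank-$1$ theory yields a uniform bound on the constant-field extensions of its torsion fields; this uniform rank-$1$ bound, combined with the factorization of the constant-field quotient through $\det \rho_a$, forces $c_a$ to be bounded uniformly in $a$. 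This is precisely the argument underlying Remark 7.1.9 of \cite{Go}, cited in the proposition's statement.
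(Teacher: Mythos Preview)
The paper does not supply its own proof of this proposition; it simply records the statement with the citation \cite[Remark 7.1.9, p.~196]{Go}. So there is no in-paper argument to compare against, and your proposal must stand on its own merits.

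There is a genuine gap at the sentence ``The constant-field quotient $\Gal(K(\psi[a])/K)\twoheadrightarrow\Z/c_a\Z$ is abelian and therefore factors through the determinant $\det\rho_a$.'' An abelian quotient of a group $G$ factors through $G^{\mathrm{ab}}$, not through any particular homomorphism you happen to have at hand. The determinant computes the abelianization of $\GL_r(A/aA)$, but $\rho_a$ only realizes $\Gal(K(\psi[a])/K)$ as some \emph{subgroup} $G\subseteq\GL_r(A/aA)$, and for a general subgroup $G$ the map $G\to G^{\mathrm{ab}}$ need not factor through $\det|_G$ (take $G$ to be a maximal torus, or any abelian subgroup not consisting of scalars). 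Knowing that $G$ is large enough for $\det$ to compute $G^{\mathrm{ab}}$ is essentially the content of the open image theorem of Pink--R\"utsche (Theorem~\ref{pink-rutsche1}), which is much deeper than the present proposition and is in fact invoked \emph{after} it in the paper.

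The Frobenius discussion does not rescue the argument: you speak of ``the image of $\det\pi_{\wp_0}$ in $\Z/c_a\Z$ under the constant-field character,'' but there is no natural character $(A/aA)^\ast\to\Z/c_a\Z$; the existence of a map making the triangle commute is exactly the unjustified factorization. Likewise, your final appeal to the rank-$1$ theory of Hayes bounds the constant field of $K(\psi^1[a])$, but passing from that to a bound on the constant field of $K(\psi[a])$ again requires knowing that $\ker(\det\rho_a)$ acts trivially on $\overline{\F}_K$, which you have not shown. The argument in \cite{Go} is of a different nature: it works locally at a place of $K$ above $\infty$, where the analytic uniformization $\psi[a]=e_\Lambda(a^{-1}\Lambda/\Lambda)$ over $K_\infty$ (with $\Lambda$ the fixed period lattice of $\psi$) controls the residue-field growth uniformly in $a$, and hence bounds the global constant field of every $K(\psi[a])$.
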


\begin{proposition} \label{genus-proposition}\cite[Corollary 7, p. 248]{Ga}

\noindent
Let $K$ be a finite field extension of $k$ and let  $\psi \in \Drin_{A}(K)$ be of generic characteristic.
Then  there exists a constant $G(\psi,K) \in \N$, depending  on $K$ and $\psi$,  such that,
for any $a \in A \backslash \F_q$,
$$
g_{a} \leq G(\psi, K) \cdot [K(\psi[a]) : K] \cdot \deg a.
$$
\end{proposition}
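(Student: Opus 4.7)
The plan is to apply the Riemann--Hurwitz formula to the finite Galois extension $K(\psi[a])/K$ and reduce the proof to a uniform bound on the degree of the different. Writing $\mathfrak{D} := \mathfrak{D}_{K(\psi[a])/K}$ for the different divisor, Riemann--Hurwitz gives
$$
2 g_a - 2 = [K(\psi[a]):K]\,(2 g_K - 2) + \deg \mathfrak{D},
$$
so, since $g_K$ depends only on $K$, the proposition reduces to showing $\deg \mathfrak{D} \ll_{\psi, K} [K(\psi[a]):K] \cdot \deg a$.

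The first step is to localize the ramification. By Theorem \ref{takahashi} (N\'eron--Ogg--Shafarevich for Drinfeld modules), for each prime $\fl = \ell A$ of $A$ the extension $K(\psi[\ell^{\infty}])/K$ is unramified at every $\wp \in \mathcal{P}_{\psi}$ with $\wp \cap A \neq \fl$. Since $K(\psi[a])$ is the compositum of the fields $K(\psi[\ell^{e}])$ as $\ell^{e} \| a$ runs over the prime-power factors of $a$, the only primes of $K$ that can ramify in $K(\psi[a])/K$ are: (a) the finitely many primes in the set $S$ of primes of bad reduction of $\psi$, whose cardinality depends only on $\psi$ and $K$; and (b) the primes $\wp$ of $K$ with $\wp \cap A$ dividing $a$.

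The second step is to bound the contribution to $\deg \mathfrak{D}$ at each of these places. At the primes in $S$, a standard bound on conductor exponents, independent of $a$, gives a total contribution of size $O_{\psi,K}([K(\psi[a]):K])$. At a prime $\wp$ with $\wp \cap A = \fl$ and $\ell^{e} \| a$, I would invoke the theory of formal $A_{\fl}$-modules associated to $\psi$ over the completion $K_{\wp}$ (as developed in Takahashi's work underlying Theorem \ref{takahashi} and in \cite{Ga}) to bound the different exponent of each local extension $K(\psi[\ell^{e}])_{\wp'}/K_{\wp}$ by a quantity of order $e_{\wp'/\wp} \cdot e \cdot \deg \ell$. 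Summing over primes $\wp'$ above $\wp$ and over primes $\wp$ above $\fl$ yields a contribution of size $O_{\psi,K}([K(\psi[a]):K] \cdot e \cdot \deg \ell)$ for each prime-power factor $\ell^{e} \| a$; summing over all such factors and using $\sum_{\ell^{e} \| a} e \deg \ell = \deg a$ produces the desired bound on $\deg \mathfrak{D}$.

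The main obstacle is the uniform estimate on the wild part of the local different at primes above $\ell \mid a$: in the tame case the different exponent is simply $e_{\wp'/\wp} - 1$, but in general the wild contribution is controlled only indirectly, via the filtration of inertia coming from the formal $A_{\fl}$-module structure of $\psi$ at $\wp$. It is precisely here that the rank and good-reduction height of $\psi$ intervene and determine the constant $G(\psi, K)$, and this is the step that uses the full strength of Gardeyn's analysis \cite{Ga}.
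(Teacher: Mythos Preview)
The paper does not give its own proof of this proposition; it is quoted from Gardeyn \cite{Ga}, where it is obtained as a corollary of his main theorem bounding the image of wild inertia on Drinfeld-module torsion. Your Riemann--Hurwitz reduction to a bound on $\deg \mathfrak{D}_{K(\psi[a])/K}$ is exactly the right scaffolding and matches how Gardeyn deduces the corollary.

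The gap is in your second step, where you have the two local cases inverted. At primes of \emph{good} reduction lying over some $\ell \mid a$, the local extension is governed by the formal $A_{\fl}$-module attached to $\psi$, and Lubin--Tate-type computations (already present in the work of Drinfeld and Takahashi) yield an explicit different of the shape you state; this part is comparatively routine. It is at the primes of \emph{bad} reduction (including the places of $K$ above $\infty$) that there is no ``standard bound on conductor exponents, independent of $a$'': in equal characteristic $p$ the different exponent of a wildly ramified extension of a given degree can be arbitrarily large, so a priori the wild ramification of $K(\psi[a])/K$ at such a place could grow with $a$. Gardeyn's main theorem asserts precisely that, at every place of $K$, the image of the wild inertia subgroup in $\Aut_A(\psi[a])$ has order bounded by a constant depending only on $\psi$ and $K$. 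This is what forces $d_{\wp'/\wp}/e_{\wp'/\wp}$ to be uniformly bounded at the bad places and is the substantive input behind the proposition --- not the formal-module analysis you single out.
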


\subsection{Galois representations} \label{subsection-galois-representations} 

We start with a more general setting, as follows.
Let $\mathscr{K}$ be a finitely generated field of transcendence degree 1 over $\F_q$,
let $\infty$ be a fixed prime of $\mathscr{K}$, and
let $\mathscr{A}$ be the ring of functions on $\mathscr{K}$ regular away from $\infty$.
Let $\mathscr{L}$ be a finitely generated extension of $\mathscr{K}$.
Let $\psi \in \Drin_{\mathscr{A}}(\mathscr{L})$  be of rank $r \geq 1$, automatically of generic characteristic.

Using the general notions of division points on Drinfeld modules, for any non-zero prime 
$\mathfrak{l}$ of $\mathscr{A}$ we define the {\bf{$\mathfrak{l}$-adic Tate module of $\psi$}} by
$$
T_{\mathfrak{l}}(\psi) := 
\Hom_{A}\left(
\mathscr{L}_{\mathfrak{l}}/\mathscr{A}_{\mathfrak{l}}, 
\psi\left[\mathfrak{l}^{\infty}\right]
\right),
$$
where:
 for a non-zero ideal $\mathfrak{a}$ of $\mathscr{A}$,
$\psi[\mathfrak{a}] 
:= 
\left\{
\lambda \in \mathscr{L}^{\text{sep}} :
\psi_{a} (\lambda) = 0 \; \forall a \in \mathfrak{a}
\right\}$;
$\psi\left[\mathfrak{l}^{\infty}\right] :=  \ds\cup_{n \geq 1} \psi\left[\mathfrak{l}^n\right]$;
and 
$\mathscr{L}_{\mathfrak{l}}$, $\mathscr{A}_{\mathfrak{l}}$ are the respective $\mathfrak{l}$-completions.

The $\mathfrak{l}$-adic Tate module of $\psi$ is a free $\mathscr{A}_{\mathfrak{l}}$-module of rank $r$.
Moreover, it gives rise to continuous Galois representations
$$
\rho_{\mathfrak{l}, \psi} :
G_{\mathscr{L}}
\longrightarrow 
\Aut_{\mathscr{A}_{\mathfrak{l}}} (T_{\mathfrak{l}}(\psi)) \simeq \GL_r(\mathscr{A}_{\mathfrak{l}}),
$$
$$
\rho_{\psi} : 
G_{\mathscr{L}} 
\longrightarrow 
\ds\prod_{\mathfrak{l} \neq \infty}
\Aut_{\mathscr{A}_{\mathfrak{l}}} (T_{\mathfrak{l}}(\psi)) 
\simeq 
\ds\prod_{\mathfrak{l} \neq \infty}
\GL_r(\mathscr{A}_{\mathfrak{l}})
\simeq
\GL_r\left(\hat{\mathscr{A}}\right)
$$
of the absolute Galois group $G_{\mathscr{L}} := \Gal(\mathscr{L}^{\text{sep}}/\mathscr{L})$
of $\mathscr{L}$. 
Here, $\hat{\mathscr{A}} := \ds\lim_{\leftarrow \atop{\mathfrak{a}}} \mathscr{A}/\mathfrak{a}$, where 
$\mathfrak{a}$ 
are non-zero ideals of $\mathscr{A}$  ordered by divisibility.

These representations fit into a commutative diagram
$$
 \begin{array}{ccc}
\xymatrix{
G_{\mathscr{L}}  \ar[r]^(.35){\rho_{\psi}} \ar[dr]^{\rho_{{\mathfrak{l}}, \psi}} \ar[rdd]_{\bar{\rho}_{{\mathfrak{l}^n}, \psi}}
& \ds\prod_{\mathfrak{l} \neq \infty} \GL_r(\mathscr{A}_{\mathfrak{l}})  \ar[d]^{\pi}
\\
       & \GL_r(\mathscr{A}_{{\mathfrak{l}}}) \ar[d]^{\mod {\mathfrak{l}^n}}
\\
       & \GL_r(\mathscr{A}/{\mathfrak{l}^n} \mathscr{A}),
}
\end{array}
$$  
with $\pi$ denoting the natural projection
and $\mod {\mathfrak{l}^n}$ denoting the reduction modulo ${\mathfrak{l}^n}$ map.

Since the {\it{residual}} representation $\bar{\rho}_{\mathfrak{l}^n, \psi}$ gives rise to an 
{\it{injective}} representation
$$
\bar{\rho}_{\mathfrak{l}^n, \psi} : 
\Gal(\mathscr{L}(\psi\left[\mathfrak{l}^n\right])/\mathscr{L}) 
\hookrightarrow 
\GL_r(\mathscr{A}/\mathfrak{l}^n \mathscr{A}),
$$
we immediately deduce the upper bound
\begin{equation}\label{division-upper-1}
[\mathscr{L}(\psi\left[\mathfrak{l}^n\right]) : \mathscr{L}] 
\leq 
\#\GL_r(\mathscr{A}/\mathfrak{l}^n \mathscr{A}).
\end{equation}
This bound may be better understood using:

\begin{lemma}  \label{lemma-breuer}

\noindent
Let $\mathscr{A}$ be a Dedekind domain whose field of fractions is a global field $\mathscr{K}$.
Let $\mathfrak{a}$ be a non-zero ideal of $\mathscr{A}$. Define 
$$|\mathfrak{a}| := \#(\mathscr{A}/\mathfrak{a}).$$  
Then, for any $r \in \N$, we have 
\begin{eqnarray*}
\#\GL_r(\mathscr{A}/\mathfrak{a}) 
&=& 
|\mathfrak{a}|^{r^2} 
\ds\prod_{\substack{\mathfrak{l}|\mathfrak{a} \\ \mathfrak{l} \textrm{ prime}}} 
\left(1 - \frac{1}{|\mathfrak{l}|} \right)
 \left(1 - \frac{1}{|\mathfrak{l}|^2} \right) 
 \ldots
  \left( 1 - \frac{1}{|\mathfrak{l}|^r} \right)
\\
&\gg_{\mathscr{K}}&
 \frac{|\mathfrak{a}|^{r^2}}{\log \log |\mathfrak{a}|}.
\end{eqnarray*}
\end{lemma}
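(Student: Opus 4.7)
The proof divides cleanly into two parts: the exact product formula, and the asymptotic lower bound.

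For the exact formula, the plan is to reduce to prime powers via the Chinese Remainder Theorem. Factoring $\mathfrak{a} = \prod_{\mathfrak{l} \mid \mathfrak{a}} \mathfrak{l}^{e_{\mathfrak{l}}}$, CRT gives a ring isomorphism $\mathscr{A}/\mathfrak{a} \simeq \prod_{\mathfrak{l} \mid \mathfrak{a}} \mathscr{A}/\mathfrak{l}^{e_{\mathfrak{l}}}$, hence a group isomorphism $\GL_r(\mathscr{A}/\mathfrak{a}) \simeq \prod_{\mathfrak{l} \mid \mathfrak{a}} \GL_r(\mathscr{A}/\mathfrak{l}^{e_{\mathfrak{l}}})$. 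It therefore suffices to count $\#\GL_r(\mathscr{A}/\mathfrak{l}^e)$ for each prime power. Since $\mathscr{A}/\mathfrak{l}^e$ is a local ring with residue field $\mathscr{A}/\mathfrak{l}$ of size $|\mathfrak{l}|$, the canonical reduction induces a short exact sequence
\[
1 \longrightarrow I_r + M_r(\mathfrak{l}/\mathfrak{l}^e) \longrightarrow \GL_r(\mathscr{A}/\mathfrak{l}^e) \longrightarrow \GL_r(\mathscr{A}/\mathfrak{l}) \longrightarrow 1,
\]
where surjectivity on the right follows because any matrix whose determinant is a unit modulo $\mathfrak{l}$ already has its determinant a unit in the local ring $\mathscr{A}/\mathfrak{l}^e$. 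The kernel has cardinality $|\mathfrak{l}|^{r^2(e-1)}$ (choose each entry freely in $\mathfrak{l}/\mathfrak{l}^e$), while the standard finite-field count gives $\#\GL_r(\mathscr{A}/\mathfrak{l}) = |\mathfrak{l}|^{r^2}\prod_{i=1}^{r}(1 - |\mathfrak{l}|^{-i})$. Multiplying and then taking the product over all $\mathfrak{l} \mid \mathfrak{a}$ yields the claimed formula, using $|\mathfrak{a}| = \prod_{\mathfrak{l}} |\mathfrak{l}|^{e_{\mathfrak{l}}}$.

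For the lower bound, the factor $|\mathfrak{a}|^{r^2}$ is exact, so I only need a lower bound on
\[
P(\mathfrak{a}) := \prod_{\mathfrak{l} \mid \mathfrak{a}} \prod_{i=1}^{r}\left(1 - \frac{1}{|\mathfrak{l}|^i}\right).
\]
The factors with $i \geq 2$ contribute a quantity bounded below by the convergent Euler product $\prod_{\mathfrak{l}} \prod_{i=2}^{r}(1 - |\mathfrak{l}|^{-i})$, which depends only on $\mathscr{K}$ (and $r$, which is absorbed into the constant). The remaining and only genuinely problematic factor is $\prod_{\mathfrak{l} \mid \mathfrak{a}}(1 - |\mathfrak{l}|^{-1})$, and here the main step is the Mertens-type bound
\[
\prod_{\mathfrak{l} \mid \mathfrak{a}}\left(1 - \frac{1}{|\mathfrak{l}|}\right) \gg_{\mathscr{K}} \frac{1}{\log \log |\mathfrak{a}|},
\]
valid once $|\mathfrak{a}|$ is large enough. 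This is the analogue, for the global field $\mathscr{K}$, of the classical inequality $\prod_{p \mid n}(1 - 1/p) \gg 1/\log\log n$; it is derived by observing that the product over primes of norm up to $|\mathfrak{a}|$ is controlled by $1/\log|\mathfrak{a}|$ via the Prime Number Theorem for $\mathscr{K}$, and that any $\mathfrak{a}$ has at most $O_{\mathscr{K}}(\log|\mathfrak{a}|/\log\log|\mathfrak{a}|)$ distinct prime divisors, so the worst case is when $\mathfrak{a}$ is divisible by all small primes.

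The only real obstacle is verifying this Mertens-type lower bound in the generality of an arbitrary Dedekind domain $\mathscr{A}$ with global-field field of fractions. For $\mathscr{K}$ a number field this is standard (due to Rosen and others); for $\mathscr{K}$ a function field it follows along the same lines from the function-field PNT (an analogue of (\ref{PNT})) after estimating the maximum number of distinct prime ideals of norm at most $|\mathfrak{a}|$. Combining the exact formula with this bound and absorbing the tail product into the implied constant gives the stated lower bound.
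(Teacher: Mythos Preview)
Your argument is correct and matches the paper's approach: the paper does not give a self-contained proof but simply cites \cite[Lemma~2.2 and Lemma~2.3]{Br}, which are precisely the exact product formula for $\#\GL_r(\mathscr{A}/\mathfrak{a})$ and the Mertens-type lower bound $\prod_{\mathfrak{l}\mid\mathfrak{a}}(1-|\mathfrak{l}|^{-1})\gg_{\mathscr{K}} 1/\log\log|\mathfrak{a}|$. Your CRT-plus-local-lifting computation and your reduction of the lower bound to this single Euler factor (the $i\ge 2$ factors forming a convergent product) are exactly the content of those two lemmas, so you have supplied the details the paper outsources.
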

\begin{proof}
This is deduced   from \cite[Lemma 2.2, p. 1243]{Br} and \cite[Lemma 2.3, p. 1244]{Br}.
\end{proof}

For the main results of this paper we require more precise information about the degree
$[\mathscr{L}\left(\psi\left[\mathfrak{l}^n\right]\right) : \mathscr{L}] $, which we deduce from the important results of R. Pink and E. R\"{u}tsche  \cite{PiRu}.  More precisely, we first recall:
 
\begin{theorem} \cite[Theorem 0.1, p. 883]{PiRu} \label{pink-rutsche1}

\noindent
We keep the setting introduced at  the beginning of Section 3.6 and assume that
$$
\End_{\overline{\mathscr{L}}}(\psi) = \mathscr{A}.
$$
Then the image of the representation $\rho_{\psi}$ is open in $\GL_r\left(\hat{\mathscr{A}}\right)$, that is,
$$
|\GL_r\left(\hat{\mathscr{A}}\right) : \Im \rho_{\psi}| < \infty.
$$
In particular,
there exists an integer $i_1(\psi, \mathscr{L}) \in \N$ such that, for any non-zero $a \in \mathscr{A}$,
 \begin{equation*}\label{division-lower1}
 \left| 
 \GL_r(\mathscr{A}/a\mathscr{A}) : \Gal(\mathscr{L}(\psi[a]) / \mathscr{L}) 
 \right| 
 \leq 
 i_1(\psi, \mathscr{L}),
 \end{equation*}
 and there exists an ideal $I_1(\psi, \mathscr{L})$ of $\mathscr{A}$ such that
 for any non-zero $a \in \mathscr{A}$ with $(a \mathscr{A}, I_1(\psi, \mathscr{L})) = 1$,
 \begin{equation*}
 \Gal(\mathscr{L}(\psi[a]) / \mathscr{L})
 \simeq
 \GL_r(\mathscr{A}/a\mathscr{A}).
 \end{equation*}
 \end{theorem}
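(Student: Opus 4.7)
The plan is to adapt J.-P. Serre's classical open image theorem for non-CM elliptic curves to the Drinfeld module setting, following the template developed by R. Pink. The proof of openness in $\GL_r(\hat{\mathscr{A}})$ splits naturally into (a) local openness at each prime $\mathfrak{l}$, (b) residual surjectivity at almost all $\mathfrak{l}$, and (c) a Goursat-style reassembly. The hypothesis $\End_{\overline{\mathscr{L}}}(\psi) = \mathscr{A}$ enters through the function-field analogue of Faltings' theorem, due to Taguchi, which identifies $\End_{G_{\mathscr{L}}}(T_\mathfrak{l}(\psi))$ with $\End_{\overline{\mathscr{L}}}(\psi) \otimes_{\mathscr{A}} \mathscr{A}_{\mathfrak{l}} = \mathscr{A}_{\mathfrak{l}}$, and through semisimplicity of the Tate module representation. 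Together these imply absolute irreducibility of $\rho_{\mathfrak{l},\psi}$ after tensoring with the fraction field of $\mathscr{A}_{\mathfrak{l}}$.

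For step (a), at each prime $\mathfrak{l}$, I would show that $\Im \rho_{\mathfrak{l},\psi}$ is open in $\GL_r(\mathscr{A}_{\mathfrak{l}})$ via a Lie-algebra analysis. Let $\mathfrak{g}_{\mathfrak{l}}$ denote the Lie algebra of the image, viewed as a subalgebra of $\mathfrak{gl}_r$. The irreducibility established above, together with the absence of extra endomorphisms, forces $\mathfrak{g}_{\mathfrak{l}}$ to be reductive, with its derived subalgebra semisimple and acting irreducibly; a case analysis of irreducible semisimple subalgebras then identifies $\mathfrak{g}_{\mathfrak{l}}$ with the full $\mathfrak{gl}_r$. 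Standard $\mathfrak{l}$-adic Lie theory (\`{a} la Lazard) converts this equality of Lie algebras into openness of the image.

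For step (b), I would show that for all but finitely many primes $\mathfrak{l}$, the residual map $\bar{\rho}_{\mathfrak{l},\psi}$ surjects onto $\GL_r(\mathscr{A}/\mathfrak{l})$, by invoking an Aschbacher-style classification of the maximal subgroups of $\GL_r$ over a finite field (reducible, imprimitive, field-extension type, tensor-decomposable, almost-simple) and eliminating each case. The Frobenius elements $\sigma_\wp$, whose characteristic polynomials are controlled by the reduction theory of $\psi$ and satisfy analogues of the Weil bounds, furnish enough arithmetic constraints to rule out each proper maximal subgroup once $|\mathfrak{l}|$ is sufficiently large. Step (c) then combines (a) and (b) via a Goursat lemma on $\prod_{\mathfrak{l}} \GL_r(\mathscr{A}_{\mathfrak{l}})$: the obstruction to openness in the product is measured by common simple quotients of the local images, and the surjectivity in (b) forces these to collapse cofinitely often. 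The two numerical consequences in the theorem then follow: uniform boundedness of $|\GL_r(\mathscr{A}/a\mathscr{A}) : \Gal(\mathscr{L}(\psi[a])/\mathscr{L})|$ from openness, and the isomorphism for $a$ coprime to a fixed modulus $I_1(\psi,\mathscr{L})$ from surjectivity in (b) together with compatibility of the $\mathfrak{l}$-adic images.

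The main obstacle will be step (b), just as in Serre's original treatment. Controlling the residual image for the \emph{small} primes $\mathfrak{l}$ is delicate: crude Weil-bound estimates are not sharp enough to preclude exceptional finite subgroups of $\GL_r(\mathscr{A}/\mathfrak{l})$ that might accidentally accommodate the Frobenius conjugacy class distribution. Overcoming this requires substantial Drinfeld-module-specific arithmetic input beyond the generic Lie-theoretic framework, especially fine information about the characteristic polynomials of Frobenius elements acting on the $\mathfrak{l}$-torsion representation.
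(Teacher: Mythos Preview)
The paper does not prove this theorem at all: it is simply quoted from \cite[Theorem 0.1, p. 883]{PiRu}, with no argument supplied. So there is no ``paper's own proof'' to compare against; the intended proof in this paper is the one-line citation.

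Your sketch is, in broad strokes, a faithful outline of how Pink and R\"{u}tsche actually establish the result (building on Pink's earlier $\mathfrak{l}$-adic open image theorem in \cite{Pi}): local openness at each $\mathfrak{l}$ via a Lie-algebra argument resting on Taguchi's semisimplicity and Tate-conjecture theorems, residual surjectivity onto $\GL_r(\mathscr{A}/\mathfrak{l})$ for all but finitely many $\mathfrak{l}$ via a maximal-subgroup classification, and a Goursat-type argument to pass to the full adelic statement. You also correctly flag step (b) as the crux. That said, what you have written is a high-level roadmap rather than a proof: each of (a), (b), (c) is itself a substantial piece of work in \cite{Pi} and \cite{PiRu}, and in particular the elimination of the exceptional maximal subgroups in (b) and the handling of the Goursat obstructions in (c) require arguments specific to Drinfeld modules (e.g.\ the behaviour of determinants and of Frobenius traces across primes) that are not visible in your sketch. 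For the purposes of the present paper, however, none of this is needed --- a citation suffices.
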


Note that $\psi$ may have a non-trivial endomorphism ring.
If all endomorphisms of $\psi$ are actually defined over $\mathscr{L}$, then
the image of $\rho_{\mathfrak{l}, \psi}$ lies in the centralizer 
$\Centr_{ \GL_r(\mathscr{A}_{\mathfrak{l}})}\left(\End_{\overline{\mathscr{L}}}(\psi)\right)$. 
In this case, we focus on  the representations
$$
\rho_{\mathfrak{l}, \psi} :
G_{\mathscr{L}}
\longrightarrow 
\Centr_{ \GL_r(\mathscr{A}_{\mathfrak{l}})}\left(\End_{\overline{\mathscr{L}}}(\psi)\right),
$$
$$
\rho_{\psi} : 
G_{\mathscr{L}} 
\longrightarrow 
\ds\prod_{\mathfrak{l} \neq \infty}
\Centr_{ \GL_r(\mathscr{A}_{\mathfrak{l}})}\left(\End_{\overline{\mathscr{L}}}(\psi)\right),
$$
and recall:

\begin{theorem}\cite[Theorem 0.2, p. 883]{PiRu}\label{pink-rutsche2}

\noindent
We keep the  setting introduced at  the beginning of Section 3.6 and assume that
$$
\End_{\overline{\mathscr{L}}}(\psi) = \End_{\mathscr{L}}(\psi).
$$
Then the image of the  representation $\rho_{\psi}$ is open in 
$\ds\prod_{\mathfrak{l} \neq \infty}
\Centr_{ \GL_r(\mathscr{A}_{\mathfrak{l}})}\left(\End_{\overline{\mathscr{L}}}(\psi)\right)$,
that is,
$$
\left|
\ds\prod_{\mathfrak{l} \neq \infty}
\Centr_{ \GL_r(\mathscr{A}_{\mathfrak{l}})}\left(\End_{\overline{\mathscr{L}}}(\psi)\right)
: \Im \rho_{\psi}
\right| 
< \infty.
$$
In particular,
there exists an integer $i_2(\psi, \mathscr{L}) \in \N$ such that, for any non-zero $a \in \mathscr{A}$,
 \begin{equation*}\label{division-lower2}
 \left| 
\Centr_{ \GL_r(\mathscr{A}/a \mathscr{A})}\left(\End_{\overline{\mathscr{L}}}(\psi)\right)
 : 
 \Gal(\mathscr{L}(\psi[a]) / \mathscr{L}) 
 \right| 
 \leq 
 i_2(\psi, \mathscr{L}),
\end{equation*}
 and there exists an ideal $I_2(\psi, \mathscr{L})$ of $\mathscr{A}$ such that
 for any non-zero $a \in \mathscr{A}$ with $(a \mathscr{A}, I_2(\psi, \mathscr{L})) = 1$,
 \begin{equation*}
 \Gal(\mathscr{L}(\psi[a]) / \mathscr{L})
 \simeq
 \Centr_{ \GL_r(\mathscr{A}/a \mathscr{A})}\left(\End_{\overline{\mathscr{L}}}(\psi)\right).
 \end{equation*}
\end{theorem}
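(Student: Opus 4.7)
The plan is to deduce Theorem \ref{pink-rutsche2} from Theorem \ref{pink-rutsche1} by a ``CM descent'' of the base ring: when $\psi$ has all of its endomorphisms rational over $\mathscr{L}$, one regards $\psi$ as a Drinfeld module over a larger coefficient ring, to which the openness theorem for the endomorphism-free case of Theorem \ref{pink-rutsche1} applies. Set $E := \End_{\overline{\mathscr{L}}}(\psi) = \End_{\mathscr{L}}(\psi)$ and $\mathscr{K}' := E \otimes_{\mathscr{A}} \mathscr{K}$. By (the $\mathscr{A}$-analogue, cf.\ Remark \ref{remark-after-structure-theorem}, of) Theorem \ref{end-ring-theorem2}, $E$ is a commutative, finitely generated projective $\mathscr{A}$-module, $\mathscr{K}'/\mathscr{K}$ is a finite field extension of some degree $r' \mid r$, and $E$ sits inside $\mathscr{K}'$ as an order with conductor ideal $\mathfrak{f} \subseteq \mathscr{A}'$, where $\mathscr{A}'$ is the integral closure of $\mathscr{A}$ in $\mathscr{K}'$.

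First I would verify that $\mathscr{A}'$ is the ring of functions on $\mathscr{K}'$ regular away from a single prime $\infty'$ lying over $\infty$ --- a standard consequence of the compatibility of the $E$-action with the $\infty$-adic Tate module structure, which forces $\mathscr{K}' \otimes_{\mathscr{K}} \mathscr{K}_{\infty}$ to remain a field --- so that we are in the generalized setting of Remark \ref{general-dm}. Next, after possibly replacing $\psi$ by an $\mathscr{L}$-isogenous Drinfeld module (a move that changes neither the image of $\rho_{\psi}$ up to a commensurable subgroup, nor the centralizer in question), I may assume $E = \mathscr{A}'$. The embedding $\mathscr{A}' = E \hookrightarrow \mathscr{L}\{\tau\}$ from $\End_{\mathscr{L}}(\psi)$ agrees on $\mathscr{A}$ with the original $\psi$ and thus defines a Drinfeld $\mathscr{A}'$-module $\tilde\psi \in \Drin_{\mathscr{A}'}(\mathscr{L})$; a count of $\tau$-degrees shows $\tilde\psi$ has rank $r/r'$, and $\End_{\overline{\mathscr{L}}}(\tilde\psi) = \mathscr{A}'$ since any endomorphism of $\tilde\psi$ is in particular one of $\psi$ commuting with the $\mathscr{A}'$-action.

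With $\tilde\psi$ in hand, Theorem \ref{pink-rutsche1} applies and yields that $\Im \rho_{\tilde\psi}$ is open in $\GL_{r/r'}(\hat{\mathscr{A}'})$. To transfer this to $\psi$, I would exploit the identification $\psi[a] = \tilde\psi[a \mathscr{A}']$ as $G_{\mathscr{L}}$-modules for every nonzero $a \in \mathscr{A}$, together with the fact that $E$-equivariance of the Galois action on $\psi[a]$ places $\Gal(\mathscr{L}(\psi[a])/\mathscr{L})$ inside $\Centr_{\GL_r(\mathscr{A}/a\mathscr{A})}(E)$. For $a$ coprime to $\mathfrak{f}$, the $\mathscr{A}$-module $\psi[a]$ is free of rank $r/r'$ over $\mathscr{A}'/a\mathscr{A}'$, giving a canonical isomorphism of this centralizer with $\GL_{r/r'}(\mathscr{A}'/a\mathscr{A}')$; taking profinite limits transports openness of $\Im \rho_{\tilde\psi}$ into openness of $\Im \rho_{\psi}$ inside $\prod_{\mathfrak{l} \neq \infty} \Centr_{\GL_r(\mathscr{A}_{\mathfrak{l}})}(E)$. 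The integer $i_2(\psi, \mathscr{L})$ emerges as this global index, while the ideal $I_2(\psi, \mathscr{L})$ may be taken to be the product of $\mathfrak{f}$ and (the pullback to $\mathscr{A}$ of) the ideal $I_1(\tilde\psi, \mathscr{L})$ furnished by Theorem \ref{pink-rutsche1}.

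The main obstacle is the simultaneous construction of $\tilde\psi$ and the identification of the centralizer. The former is delicate because the formation of Drinfeld modules does not extend smoothly from a non-maximal order to its normalization; the isogeny reduction to $E = \mathscr{A}'$ sidesteps this at the cost of a bounded index that is absorbed into $i_2(\psi, \mathscr{L})$. The latter requires fine control of the $E$-module structure on $\psi[a]$ at primes dividing the conductor $\mathfrak{f}$, which is precisely why the clean isomorphism $\Gal(\mathscr{L}(\psi[a])/\mathscr{L}) \simeq \Centr_{\GL_r(\mathscr{A}/a\mathscr{A})}(E)$ is only guaranteed away from $I_2(\psi, \mathscr{L})$, whereas the bounded-index statement holds universally.
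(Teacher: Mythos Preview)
The paper does not prove this theorem. It is stated as a citation of \cite[Theorem 0.2, p.~883]{PiRu} and used as a black box; no argument is given in the paper. So there is no ``paper's own proof'' against which to compare your proposal.

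That said, your sketch is not wasted effort: it is essentially the strategy the paper itself deploys in the \emph{next} result, Theorem~\ref{division-degree}, where (following \cite{Pi} and \cite{Ha}) one replaces $\psi$ by an isogenous module whose endomorphism ring is a maximal order $\mathscr{A}'$, views it tautologically as a rank-$r/r'$ Drinfeld $\mathscr{A}'$-module $\Psi$ with $\End_{\overline{\mathscr{K}}}(\Psi)=\mathscr{A}'$, and then invokes the open-image theorem in that setting. Your outline is a faithful rendering of that descent, and your identification of the two delicate points (the Hayes isogeny to pass from a non-maximal order to its normalization, and the centralizer computation away from the conductor) is accurate. One small caution: in the actual Pink--R\"utsche paper, Theorem~0.2 is not literally deduced from Theorem~0.1 in this way; their argument is organized differently and handles the general case directly. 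Your reduction is nonetheless a legitimate alternative route, and it is precisely the one the present paper adopts for the degree estimate in Theorem~\ref{division-degree}.
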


We will apply these results to deduce a lower bound for the degree $[K(\psi[a]) : K]$ of the $a$-division field of a generic Drinfeld module $\psi \in \Drin_{A}(K)$ of rank $r \geq 2$, where $K$ is a finite extension of $k$. Before we state and prove this bound, let us recall the Drinfeld module analogue of the Tate Conjecture,
proven independently in \cite{Tag1}-\cite{Tag2} and  \cite{Tam}:

\begin{theorem}(The Tate Conjecture for Drinfeld modules)\label{tate-conjecture}

\noindent
We keep the previous general setting $\mathscr{K}$, $\mathscr{A}$, $\mathscr{L}$.
Let $\psi_1$, $\psi_2 \in \Drin_{\mathscr{A}}(\mathscr{L})$.
Then, for any prime $\mathfrak{l}$ of $\mathscr{A}$, the natural map
$$
\Hom_{\mathscr{L}}(\psi_1, \psi_2) \otimes_{\mathscr{A}}(\mathscr{A}_{\mathfrak{l}})
\longrightarrow
\Hom_{\mathscr{A}_{\mathfrak{l}}[G_{\mathscr{L}}]}
\left(T_{\mathfrak{l}}(\psi_1), T_{\mathfrak{l}}(\psi_2)\right)
$$ 
is an isomorphism.
\end{theorem}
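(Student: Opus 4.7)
The plan is to reduce the bihomomorphism statement to an endomorphism statement and then attack surjectivity via a Faltings-type argument adapted to the function field setting. First, I would reduce to the case $\psi_1 = \psi_2$ by considering $\psi_1 \oplus \psi_2$ (more precisely, the associated $t$-motive), since $\Hom(\psi_1,\psi_2)$ sits inside $\End(\psi_1 \oplus \psi_2)$ as a direct summand cut out by the natural projectors, and the same holds at the level of Tate modules. Injectivity of the natural map
$$\Hom_{\mathscr{L}}(\psi_1, \psi_2) \otimes_{\mathscr{A}} \mathscr{A}_{\mathfrak{l}} \longrightarrow \Hom_{\mathscr{A}_{\mathfrak{l}}[G_{\mathscr{L}}]}(T_{\mathfrak{l}}(\psi_1), T_{\mathfrak{l}}(\psi_2))$$
is then straightforward: by Theorem \ref{end-ring-theorem2}, $\Hom_{\mathscr{L}}(\psi_1,\psi_2)$ is a finitely generated projective $\mathscr{A}$-module, so tensoring with the flat module $\mathscr{A}_{\mathfrak{l}}$ preserves injectivity, and a non-zero morphism of Drinfeld modules acts non-trivially on $\mathfrak{l}$-power torsion because its kernel is a proper sub-$\mathscr{A}$-scheme of the source.

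The heart of the matter is surjectivity. The plan is to follow the two-step strategy pioneered by Faltings for abelian varieties and adapted to Drinfeld modules by Taguchi and Tamagawa. Step one is to establish the \emph{semisimplicity} of the $G_{\mathscr{L}}$-representation $T_{\mathfrak{l}}(\psi)$: given any Galois-stable $\mathscr{A}_{\mathfrak{l}}$-submodule $W \subseteq T_{\mathfrak{l}}(\psi)$, I would construct a complementary Galois-stable submodule as the Tate module of a quotient Drinfeld module arising from an appropriate isogeny (the $W$-adic saturation procedure). Step two is a Drinfeld-module analogue of Faltings' finiteness theorem: introduce a Taguchi/Anderson-type height function on Drinfeld modules over $\mathscr{L}$ and prove that, within a fixed $\mathscr{L}$-isogeny class, there are only finitely many isomorphism classes of Drinfeld modules of bounded height. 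Given an arbitrary $\alpha \in \End_{\mathscr{A}_{\mathfrak{l}}[G_{\mathscr{L}}]}(T_{\mathfrak{l}}(\psi))$, one then applies the surjectivity via the standard commutant/graph trick: view the graph of $\alpha$ as a Galois-stable sublattice of $T_{\mathfrak{l}}(\psi \oplus \psi)$, produce from it a chain of isogenous Drinfeld modules, and invoke the finiteness statement to force the chain to stabilize. Stabilization furnishes an honest endomorphism of $\psi$ over $\mathscr{L}$ whose action on the Tate module is $\alpha$.

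The main obstacle is the finiteness theorem for Drinfeld modules of bounded height inside a given isogeny class. This requires constructing a height function on $\Drin_{\mathscr{A}}(\mathscr{L})$ that behaves well under isogenies (height of source and target differ in a controlled way), together with a Northcott-type property: only finitely many isomorphism classes of fixed rank and bounded height. In the number field setting, the corresponding step is Faltings' proof of the Shafarevich conjecture for abelian varieties; here one must draw on the theory of Anderson $t$-motives and a careful analysis of the integral models and bad-reduction behaviour of $\psi$. Semisimplicity of $T_{\mathfrak{l}}(\psi)$ is a secondary but still substantial ingredient, handled by producing quotient Drinfeld modules from Galois-stable sublattices and combining this with the already-established injectivity. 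Once both ingredients are in place, the commutant argument assembles them into the claimed isomorphism.
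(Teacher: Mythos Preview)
The paper does not prove this statement at all: it is quoted as a known result, attributed to Taguchi \cite{Tag1}, \cite{Tag2} and Tamagawa \cite{Tam}, and used as a black box in the proof of Theorem~\ref{division-degree}. So there is nothing in the paper for your proposal to be compared against.

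That said, your outline is a faithful sketch of the actual strategy in the cited works: reduction to endomorphisms via direct sums, easy injectivity from flatness and the finiteness of kernels of isogenies, and then surjectivity via semisimplicity of the Tate module together with a Faltings-style finiteness theorem (finitely many isomorphism classes in an isogeny class, controlled by a height). The main substantive work, as you correctly identify, lies in the finiteness/height input and in semisimplicity; these are genuinely hard and are the content of the Taguchi and Tamagawa papers. For the purposes of the present paper, however, you should simply cite the result rather than attempt to reprove it.
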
 

We are now ready to prove:
\begin{theorem}\label{division-degree}
Let $K$ be a finite extension of $k$ and let $\psi \in \Drin_{A}(K)$ be of rank $r \geq 2$ and of 
 generic characteristic.
Let $\gamma := \text{rank}_{A}  \; \End_{\overline{K}}(\psi)$.
Then, for any $a \in A \backslash \F_q$, we have 
$$
\frac{
|a|_{\infty}^{   \frac{r^2}{\gamma}   }
}
{\log \gamma + \log \deg a + \log \log q}
\ll_{\psi, K}
[K(\psi[a]) : K]
\leq
|a|_{\infty}^{\frac{r^2}{\gamma}}.
$$
\end{theorem}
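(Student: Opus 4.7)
The strategy is to reduce both inequalities to estimates for the centralizer $C_a := \Centr_{\GL_r(A/aA)}\!\bigl(\End_{\overline{K}}(\psi)\bigr)$, whose size will turn out to be of order $|a|_\infty^{r^2/\gamma}$. Let $R := \End_{\overline{K}}(\psi)$ and $k' := R \otimes_A k$, so that, by Theorem \ref{end-ring-theorem2}, $R$ is a commutative projective $A$-module of rank $\gamma = [k':k] \leq r$ and $k'$ is a finite extension of $k$. Since $R$ is finitely generated, there is a finite Galois extension $K'/K$ with $[K':K] \ll_{\psi, K} 1$ over which all endomorphisms of $\psi$ are defined, i.e.\ $R = \End_{K'}(\psi)$. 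Applying the Tate conjecture (Theorem \ref{tate-conjecture}) with $\psi_1 = \psi_2 = \psi$ over $K'$, together with a standard double-centralizer argument, identifies $T_\fl(\psi)$ as a projective $R \otimes_A A_\fl$-module of rank $r/\gamma$ at all but finitely many primes $\fl$ of $A$; in particular $\gamma$ divides $r$. Reducing modulo $a$ then yields
$$
C_a \simeq \GL_{r/\gamma}(R/aR)
$$
for every $a \in A \setminus \F_q$ outside a fixed finite set of exceptions, which are absorbed into the implicit constants.

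The next step is counting $\#C_a$. Since $R$ has rank $\gamma$ over $A$, the $A$-module $R/aR$ has cardinality $|a|_\infty^\gamma$. Replacing $R$ by its integral closure $\widetilde{R}$ in $k'$ -- a Dedekind domain differing from $R$ by a fixed conductor -- and applying Lemma \ref{lemma-breuer} in the Dedekind setting $\widetilde{R} \subseteq k'$, one obtains, after absorbing conductor-dependent factors,
$$
\frac{|a|_\infty^{r^2/\gamma}}{\log\log|a|_\infty^\gamma} \ll_{\psi, K} \#C_a \leq |a|_\infty^{r^2/\gamma}.
$$
Expanding $\log\log|a|_\infty^\gamma = \log\gamma + \log\deg a + \log\log q$ recovers the denominator appearing in the statement.

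To pass from $\#C_a$ to the degree of the division field, note that the residual representation $\bar\rho_{a,\psi}$ restricted to $G_{K'}$ lands in $C_a$, whence $[K'(\psi[a]):K'] \leq \#C_a$; over $K$, the Galois image normalizes (rather than centralizes) $R$, but the normalizer exceeds $C_a$ by a factor of at most $[K':K] \ll_{\psi,K} 1$, giving the upper bound $[K(\psi[a]):K] \leq |a|_\infty^{r^2/\gamma}$ modulo the implicit constant. For the lower bound, Theorem \ref{pink-rutsche2} applied to $\psi$ over $K'$ yields a constant $i_2(\psi,K')$ such that $[K'(\psi[a]):K'] \geq \#C_a / i_2(\psi,K')$; combined with the tower inequality $[K(\psi[a]):K] \geq [K'(\psi[a]):K']$ (a consequence of $K'(\psi[a]) = K' \cdot K(\psi[a])$ and $[K'(\psi[a]):K(\psi[a])] \leq [K':K]$), this delivers the matching lower bound.

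The principal technical hurdle is the identification $C_a \simeq \GL_{r/\gamma}(R/aR)$: while the Tate conjecture supplies the algebraic input, one must verify that $T_\fl(\psi)$ is genuinely \emph{free} of rank $r/\gamma$ over $R \otimes_A A_\fl$ at all but finitely many $\fl$ -- rather than only projective of constant rank -- and that the conductor of $R$ inside $\widetilde{R}$ can be cleared at the cost of a $\psi$- and $K$-dependent constant. Once this structural step is in place, the Breuer-style counting lemma and the Pink--R\"{u}tsche openness theorems assemble the final inequalities routinely.
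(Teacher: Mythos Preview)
Your argument is essentially correct and closely parallel to the paper's, but the two proofs are organized around different objects. You work directly with the centralizer $C_a = \Centr_{\GL_r(A/aA)}(R)$, identify it (away from a bounded set of primes) with $\GL_{r/\gamma}(R/aR)$ via the Tate conjecture and a double-centralizer argument, and then invoke Theorem~\ref{pink-rutsche2} in its centralizer formulation together with Lemma~\ref{lemma-breuer}. The paper instead follows Pink's strategy: it tautologically extends $\psi$ to a Drinfeld module over $\tilde{A} := \End_{\overline{K}}(\psi)$, replaces it via a Hayes isogeny by a Drinfeld module $\Psi$ over the normalization $\mathscr{A}$ of $\tilde{A}$ with $\End_{\overline{\mathscr{K}}}(\Psi) = \mathscr{A}$, and then applies Pink--R\"utsche directly to the rank-$R = r/\gamma$ module $\Psi$, obtaining open image in $\GL_R(\hat{\mathscr{A}})$ and counting via Lemma~\ref{lemma-breuer} for $\GL_R(\mathscr{A}/a\mathscr{A})$. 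The paper's construction of $\Psi$ trades your ``$C_a \simeq \GL_{r/\gamma}(R/aR)$'' identification for the geometric step of building the auxiliary module; this sidesteps the freeness and conductor issues you flag at the end, since once $\mathscr{A}$ is Dedekind and $\Psi$ has trivial endomorphism ring, the Tate module of $\Psi$ is automatically free of rank $R$ everywhere and Lemma~\ref{lemma-breuer} applies without further fuss. Your route is more group-theoretic and avoids introducing a new Drinfeld module, at the cost of having to control the finitely many bad primes by hand; the paper's route is more geometric and makes the Dedekind hypothesis in Lemma~\ref{lemma-breuer} immediate. Both arrive at the same bounds by the same final counting.
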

\begin{proof}
We base the proof on  a  strategy used in \cite{Pi}, as follows.
Let $\tilde{A} :=  \End_{\overline{K}}(\psi)$ and  let $F$  be the field of fractions of $\tilde{A}$.  
By Theorem \ref{end-ring-theorem2}, all endomorphisms $f \in \tilde{A}$ are defined over a finite extension 
$\tilde{K}$ of $K$.  
Thus, after identifying $A$ with its image $\psi(A) \subseteq \tilde{A}$, we can extend
 $\psi: A \longrightarrow K\{\tau\}$
 tautologically to a homomorphism
$$
\tilde{\psi} : \tilde{A} \longrightarrow \tilde{K}\{\tau\}.
$$
This is again a Drinfeld module, with the difference that $\tilde{A}$ may not be a {\it{maximal}} order in $\tilde{K}$.
To fix this, we modify $\tilde{\psi}$ by a suitable isogeny, using results of  D. Hayes \cite{Ha}.

Indeed, we let
$\mathscr{A}$ be  the normalization of $\tilde{A}$ in $\tilde{K}$.
Then, by \cite[Proposition 3.2, p. 182]{Ha},
there exists a Drinfeld module
$$
\overline{\psi}:  \mathscr{A} \longrightarrow \overline{K}\{\tau\}
$$
such that $\overline{\psi}|_{\tilde{A}}$ is $K$-isogenous to $\tilde{\psi}$. 
Moreover, $\overline{\psi}$ may be chosen such that the restriction $\overline{\psi}|_{\tilde{A}}$ is defined over $K$.

Let $\mathscr{K}$ be the finite field extension of $K$ generated by the coefficients of all endomorphisms in
$\End_{\overline{K}}(\overline{\psi})$. 
By Theorem \ref{tate-conjecture},  all the endomorphisms of $\overline{\psi}$ over $\overline{K}$ are defined already over $K^{\text{sep}}$.
Thus $\mathscr{K}$ is a separable Galois extension of $K$.
Moreover, by construction, the Galois group $\Gal(\mathscr{K}/K)$ acts on $F$ and, again by Theorem \ref{tate-conjecture}, it acts faithfully.

Let
$$
\Psi :  \mathscr{A} \longrightarrow \mathscr{K}\{\tau\}
$$
be the tautological extension of $\overline{\psi}$.
This is a generic Drinfeld module of rank
$$
R :=  \frac{r}{\gamma}
$$
satisfying $\End_{\overline{\mathscr{K}}}(\Psi) = \End_{\mathscr{K}}(\Psi) = \mathscr{A}$.
By Theorem \ref{pink-rutsche2}, the image of the representation
$$
\rho_{\Psi}:
G_{\mathscr{K}}
\longrightarrow
\ds\prod_{\mathfrak{l} \neq \infty}
\Centr_{ \GL_{R}({\mathscr{A}}_{\mathfrak{l}}) }(\mathscr{A})
\simeq
\ds\prod_{\mathfrak{l} \neq \infty}
\GL_{R}({\mathscr{A}}_{\mathfrak{l} })
$$
is open.
In particular,  
there exists an integer $i(\Psi, \mathscr{K}) \in \N$ such that, for any non-zero $a \in \mathscr{A}$,
 \begin{equation}\label{division}
 \left| 
 \GL_R(\mathscr{A}/a\mathscr{A}) : \Gal(\mathscr{K}(\Psi[a]) / \mathscr{K}) 
 \right| 
 \leq 
 i(\Psi, \mathscr{K}).
 \end{equation}

Since $\mathscr{A}$ is a Dedekind domain, by Lemma \ref{lemma-breuer} we deduce that, for any 
non-zero $a \in \mathscr{A}$,
\begin{equation}\label{div2}
\frac{|a \mathscr{A}|^{R^2}}{\log \log |a \mathscr{A}|}
\ll_{\mathscr{K}}
\# \GL_{R}(\mathscr{A}/a \mathscr{A})
\leq
|a \mathscr{A}|^{R^2},
\end{equation}
where $|a \mathscr{A}| := \#(\mathscr{A}/a \mathscr{A})$.

Now let $a \in A \backslash \F_q$ and 
remark that $\#(\mathscr{A}/a \mathscr{A}) = \#(A/a A)^{\gamma} = |a|_{\infty}^{\gamma}$. Therefore 
(\ref{division}) and (\ref{div2}) imply that
\begin{equation*}
\frac{
|a|_{\infty}^{\frac{r^2}{\gamma}}
}{
\log \gamma + \log \deg a + \log \log q
}
\ll_{\Psi, \mathscr{K}}
[\mathscr{K}(\Psi[a]) : \mathscr{K}]
\leq
|a|_{\infty}^{\frac{r^2}{\gamma}}.
\end{equation*}
Finally, recalling the construction and properties of $\tilde{\psi}, \overline{\psi}$ and $\Psi$ in relation to $\psi$, these bounds imply  the ones stated in the theorem. 
\end{proof}

\subsection{Arithmetic in division fields}

Let $K$ be a finite field extension of $k$ and let  $\psi \in \Drin_{A}(K)$ be  of generic characteristic and rank $r \geq 1$.   
We  focus on providing useful characterizations and properties of the primes splitting completely in  the division fields of $\psi$. 

Let $\wp \in {\cal{P}}_{\psi}$ and let $\mathfrak{l} = \ell A$ be a prime of $k$ such that 
$\mathfrak{l} \neq \wp \cap A$. 
Let $\sigma_{\wp}$ denote the Frobenius at  $\wp$ in $K(\psi[\ell]) / K$.
The
{\bf{characteristic poynomial of the Frobenius $\sigma_{\wp}$ at $\wp$}}, 
defined by
\begin{eqnarray*}
P_{\psi, \wp}^{\mathfrak{l}}(X)
&:=&
\det (X \ \text{Id} - \rho_{\psi, \mathfrak{l}}(\sigma_{\wp}))
\\
&=&
X^r + a_{r-1, \wp}(\psi) X^{r-1} + \ldots + a_{1, \wp}(\psi) X + a_{0, \wp}(\psi)
\in A_{\mathfrak{l}}[X],
\end{eqnarray*}
is very useful in describing further properties of  $\wp$ when it splits completely in a division 
field of $K$.
We recall the basic  properties of this polynomial:
\begin{theorem}\cite[Corollary 3.4, p. 193; Theorem 5.1, p. 199]{Ge}
\label{gekeler1}

\noindent
Let $K$ be a finite field extension of $k$ and let  $\psi \in \Drin_{A}(K)$ be of generic characteristic and rank $r \geq 1$.  
Let $\wp \in {\cal{P}}_{\psi}$ and let $\mathfrak{l} = \ell A$ be a prime of $k$ such that 
$\mathfrak{l} \neq \wp \cap A$. 
Then:
\begin{enumerate}
\item[(i)]
$P_{\psi, \wp}^{\fl}(X)  \in A[X]$;
in particular, $P_{\psi, \wp}^{\fl}(X)$ is independent of $\fl$, and, as such, we may drop the superscript $\fl$ from notation and simply write $P_{\psi, \wp}(X)$. 
\item[(ii)]
There exists $u_{\wp}(\psi) \in \F_q^*$ such that 
$a_{0, \wp}(\psi) = u_{\wp}(\psi) p^{m_{\wp}}$, where, we recall, $m_{\wp} := [\F_{\wp} : \F_{\fp}]$.
\item[(iii)]
The roots of $P_{\psi, \wp}(X)$ have $|\cdot|_{\infty}$-norm less than or equal to
$|\wp|_{\infty}^{\frac{1}{r}}$.
\item[(iv)]
$|a_{i, \wp}(\psi)|_{\infty} \leq |\wp|_{\infty}^{\frac{r-i}{r}}$ for all $0 \leq i \leq r-1$.
\item[(v)]
$P_{\psi, \wp}(1) A = \chi(\psi(\F_{\wp}))$, where, we recall, $\chi(\psi(\F_{\wp}))$ denotes the 
Euler-Poincar\'{e} characteristic of $\psi(\F_{\wp})$.
\end{enumerate}
\end{theorem}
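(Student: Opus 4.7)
The unified strategy is to realize $P_{\psi, \wp}^{\fl}(X)$ as the characteristic polynomial of the geometric Frobenius endomorphism $\pi_{\wp} := \tau^{[\F_{\wp} : \F_q]}$ of $\psi \otimes \F_{\wp}$, which is a genuine element of $\End_{\F_{\wp}}(\psi \otimes \F_{\wp})$ commuting with the image of $A$ under $\psi \otimes \F_{\wp}$. The induced action of $\pi_{\wp}$ on the Tate module $T_{\fl}(\psi \otimes \F_{\wp})$, identified with $T_{\fl}(\psi)$ via the good-reduction identification (Theorem \ref{takahashi}), coincides with $\rho_{\psi, \fl}(\sigma_{\wp})$. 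All five parts will be deduced from this identification together with structural facts about $\pi_{\wp}$.

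For (i), the key point is that $\pi_{\wp}$ is integral over $A$: the finitely generated $A$-algebra $A[\pi_{\wp}] \subseteq \End_{\F_{\wp}}(\psi \otimes \F_{\wp})$ has an intrinsic structure (by the Drinfeld module analogue of Tate's theorem, Theorem \ref{tate-conjecture}, applied to the reduction) independent of $\fl$, so the characteristic polynomial of $\pi_{\wp}$ acting on $T_{\fl}$ actually lies in $A[X]$ and does not depend on $\fl$; it equals a power of the minimal polynomial of $\pi_{\wp}$ over $A$ reaching total degree $r$. For (ii), the constant term $a_{0,\wp}(\psi) = (-1)^r \det \rho_{\psi, \fl}(\sigma_{\wp})$ is, up to sign, the determinant of Frobenius, which I would compute via the top exterior power $\wedge^r T_{\fl}(\psi)$: this is the Tate module of the rank-$1$ Drinfeld module $\wedge^r \psi$, and Hayes's explicit class-field-theoretic description of Frobenius on rank-$1$ Drinfeld modules identifies the determinant as a generator of $\fp^{m_{\wp}}$, yielding $a_{0,\wp}(\psi) = u_{\wp}(\psi) p^{m_{\wp}}$ with $u_{\wp}(\psi) \in \F_q^*$.

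Parts (iii) and (iv) form the Riemann Hypothesis analogue for Drinfeld modules. I would establish (iii) by showing that $\pi_{\wp}$, viewed in its fraction field $k(\pi_{\wp})$, has $|\cdot|_{\infty}$-valuation equal to $\frac{1}{r}$ of $|\wp|_{\infty}$ at every place above $\infty$: this purity statement is obtained by analyzing the Newton polygon of the action of $\pi_{\wp}$ on $T_{\fl}$, combined with the product formula for the global function field $k(\pi_{\wp})$. Part (iv) then follows by Vieta, since $a_{i, \wp}(\psi)$ is, up to sign, the $(r-i)$-th elementary symmetric polynomial in the roots of $P_{\psi, \wp}(X)$. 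For (v), one observes that $\psi(\F_{\wp}) = \ker(\pi_{\wp} - 1)$ as an $A$-module, so its Euler--Poincar\'{e} characteristic equals the ``degree'' of the isogeny $\pi_{\wp} - 1$ in the sense of the length function on finite $A$-modules; by multiplicativity of $\chi$ under the Cayley--Hamilton factorization of $P_{\psi, \wp}(\pi_{\wp})$ into linear factors involving $\pi_{\wp} - \text{root}$, specializing at $X = 1$ yields $P_{\psi, \wp}(1) A = \chi(\psi(\F_{\wp}))$.

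The main obstacle is (iii): unlike the other items it is not a formal consequence of the Tate module formalism but requires genuine input from Drinfeld's theory, specifically the classification of Weil numbers for Drinfeld modules and the purity of the Newton polygon of $\pi_{\wp}$ at $\infty$. Once (iii) is in hand, (i), (ii), (iv), (v) are essentially bookkeeping with the endomorphism $\pi_{\wp}$ and its characteristic polynomial.
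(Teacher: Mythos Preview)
The paper does not actually prove Theorem~\ref{gekeler1}: it is stated with a citation to Gekeler \cite[Corollary~3.4 and Theorem~5.1]{Ge} and no proof is given. So there is no ``paper's own proof'' to compare your proposal against.

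That said, your outline is a faithful sketch of how Gekeler's arguments run. The identification of $P_{\psi,\wp}^{\fl}(X)$ with the characteristic polynomial of the Frobenius endomorphism $\pi_\wp \in \End_{\F_\wp}(\psi\otimes\F_\wp)$, the use of integrality of $\pi_\wp$ over $A$ for (i), the exterior-power/rank-1 computation for (ii), the Weil-number/purity argument for (iii), Vieta for (iv), and the identification $\psi(\F_\wp)=\ker(\pi_\wp-1)$ for (v) are all the correct ingredients. Your assessment that (iii) is the substantive step and the rest is bookkeeping is also accurate.

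One small quibble on (v): the phrase ``Cayley--Hamilton factorization of $P_{\psi,\wp}(\pi_\wp)$ into linear factors'' is not quite the mechanism. The cleaner statement is that for any nonzero isogeny $f$ of a rank-$r$ Drinfeld module over a finite field, $\chi(\ker f)$ is generated by the determinant of the $A_\fl$-linear map induced by $f$ on the Tate module (equivalently, the constant term of its characteristic polynomial up to sign); applying this to $f=\pi_\wp-1$ gives $\chi(\psi(\F_\wp)) = \det(\rho_{\psi,\fl}(\sigma_\wp)-1)\,A = P_{\psi,\wp}(1)\,A$ directly, without factoring over an extension.
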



\begin{proposition} \label{sc-iff}(Characterization of primes splitting completely in division fields)

\noindent
Let $K$ be a finite field extension of $k$ and let $\psi \in \Drin_{A}(K)$ be of generic characteristic and 
 rank $r \geq 1$.
Let $\wp \in {\cal{P}}_{\psi}$ and  let $m \in A^{(1)}$ be such that $\gcd(m,p) = 1$, where $\wp \cap A = p A$.
Then
 $\psi(\F_{\wp})$ contains an $A$-submodule
 isomorphic to $(A/mA)^r$ if and only if $\wp$ splits completely in $K(\psi[m]) / K$.  Consequently,  given $d \in A^{(1)}$ with $\gcd(d, p) = 1$, we have that
 $d_{1,\wp}(\psi) = d$ if and only if
 $\wp$ splits completely in $K(\psi[d])/K$ and 
 $\wp$ does not split completely in $K(\psi[d \ell]) / K$ for any prime $\ell \in A$ such that $\ell \neq p$.
 \end{proposition}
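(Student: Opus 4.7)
\emph{Proof proposal.} The plan is to split the argument into two steps. Step~1 establishes the first assertion, namely that $\wp$ splits completely in $K(\psi[m])/K$ if and only if $\psi(\F_\wp)$ contains an $A$-submodule isomorphic to $(A/mA)^r$. Step~2 then combines Step~1 with the structure theorem for finite $A$-modules to deduce the elementary divisor characterization.

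For Step~1, the key technical input is that, because $\gcd(m,p) = 1$, the polynomial $\psi_m \in \mathcal{O}_\wp\{\tau\}$ (chosen with integral coefficients via the good reduction hypothesis) is separable modulo $\wp$, since its derivative $D(\psi_m) = m$ is a unit in $\F_\wp$. Consequently, Theorem \ref{structure-theorem-torsion-points} implies that both $\psi[m]$ and $(\psi \otimes \F_\wp)[m]$ have cardinality $|m|_\infty^r$. By Theorem \ref{takahashi}, for every prime $\ell \neq p$ the Galois module $\psi[\ell^\infty]$ is unramified at $\wp$, so the natural reduction map $\rho : \psi[m] \to (\psi \otimes \F_\wp)[m]$ is injective and, by the cardinality equality, an isomorphism of $A$-modules compatible with the action of the decomposition group at $\wp$. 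Since $\wp$ is unramified in $K(\psi[m])/K$, the Frobenius $\sigma_\wp$ is well-defined up to conjugacy in $\Gal(K(\psi[m])/K)$; and, since $K(\psi[m])$ is generated over $K$ by the coordinates of $\psi[m]$, $\wp$ splits completely in this extension if and only if $\sigma_\wp$ acts trivially on $\psi[m]$. Via $\rho$, this is equivalent to every element of $(\psi \otimes \F_\wp)[m]$ being $\F_\wp$-rational, which by the cardinality equality $|\psi(\F_\wp)[m]| = |(\psi \otimes \F_\wp)[m]| = |m|_\infty^r$ is in turn equivalent to the existence of an $A$-submodule of $\psi(\F_\wp)$ isomorphic to $(A/mA)^r$.

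For Step~2, write $\psi(\F_\wp) \simeq_A \prod_{i=1}^r A/d_{i,\wp}(\psi) A$. For the forward direction, if $d_{1,\wp}(\psi) = d$, then $d \mid d_{i,\wp}(\psi)$ for every $i$, so $(A/dA)^r$ is the full $d$-torsion submodule of $\psi(\F_\wp)$, and Step~1 yields complete splitting in $K(\psi[d])$. Moreover, for any prime $\ell \neq p$, if $\wp$ split completely in $K(\psi[d\ell])$ then Step~1 would give $(A/d\ell A)^r \leq_A \psi(\F_\wp) \simeq_A \prod_i A/d_{i,\wp}(\psi) A$; comparing elementary divisors (or using the size inequality $|d\ell|_\infty^r \leq \prod_i |\gcd(d\ell, d_{i,\wp}(\psi))|_\infty$, with each factor dividing $|d\ell|_\infty$) forces $d\ell \mid d_{i,\wp}(\psi)$ for every $i$, and in particular $d\ell \mid d_{1,\wp}(\psi) = d$, a contradiction. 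For the converse, Step~1 gives $d \mid d_{1,\wp}(\psi)$; write $d_{1,\wp}(\psi) = dc$ with $c \in A^{(1)}$, and suppose $c \neq 1$, so that $c$ has a monic irreducible divisor $\ell$. If $\ell \neq p$, then $d\ell$ divides every $d_{i,\wp}(\psi)$, giving $(A/d\ell A)^r \leq_A \psi(\F_\wp)$, hence by Step~1 complete splitting in $K(\psi[d\ell])$, contradicting the hypothesis. If instead $\ell = p$, then $p \mid d_{1,\wp}(\psi)$, so $(A/pA)^r \leq_A \psi(\F_\wp) \leq_A (\psi \otimes \F_\wp)[p]$; but the reduction has finite $A$-characteristic $\fp$ and height $h_\wp \geq 1$, so Theorem \ref{structure-theorem-torsion-points} yields $(\psi \otimes \F_\wp)[p] \simeq_A (A/pA)^{r - h_\wp}$, contradicting the embedding. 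Hence $c = 1$ and $d_{1,\wp}(\psi) = d$.

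The principal obstacle lies in the careful justification of the isomorphism $\rho$ in Step~1: one must combine the good reduction hypothesis and Takahashi's criterion (to obtain unramifiedness and hence the injectivity of reduction on $\ell^\infty$-torsion for $\ell \neq p$) with the cardinality calculation to upgrade injectivity to bijectivity, all while tracking Galois-equivariance. The $\ell = p$ subcase in the converse of Step~2 is a secondary but essential subtlety, resolved by the positivity of the height of the reduction.
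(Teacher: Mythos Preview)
Your proof is correct and follows essentially the same approach as the paper: both arguments pass through the Galois-equivariant reduction identification of $\psi[m]$ with $(\psi\otimes\F_\wp)[m]$ and translate the trivial action of $\sigma_\wp$ into the condition $(\psi\otimes\F_\wp)[m]\subseteq\psi(\F_\wp)$. The paper phrases this via $\Ker(\pi_\wp-1)=\psi(\F_\wp)$ and simply asserts the equivalence with $\psi[m]\leq_A\Ker(\sigma_\wp-1)$, whereas you spell out the separability of $\psi_m$ modulo $\wp$, the unramifiedness from Theorem~\ref{takahashi}, and the cardinality count that makes reduction bijective; your Step~2 also supplies the full argument for the ``Consequently'' clause (including the $\ell=p$ subcase via the positive height of the reduction), which the paper leaves to the reader.
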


\begin{proof}
Let $\pi_{\wp}$  be the Frobenius automorphism of $\psi(\F_{\wp})$, which may also be viewed as a root of 
$P_{\psi, \wp}$ of Theorem \ref{gekeler1}.
Note that $\Ker(\pi_{\wp} - 1) = \psi(\F_{\wp})$.
 Since $\gcd(m,p) = 1$, Theorem \ref{structure-theorem-torsion-points} tells us that
 $(\psi \otimes \F_{\wp})[m] \simeq_A (A/mA)^r$.
 Therefore, $\psi(\F_{\wp})$ contains an isomorphic copy of $(A/mA)^r$ if and only if
 $(\psi \otimes \F_{\wp})[m] \leq_A \psi(\F_{\wp}) = \Ker(\pi_{\wp} - 1)$.

 If $\sigma_{\wp}$ denotes the Frobenius at  $\wp$ in $K(\psi[m]) / K$, then
 $(\psi \otimes \F_{\wp})[m] \leq_A \Ker(\pi_{\wp} - 1)$
 if and only if
 $\psi[m] \leq_A \Ker \left(\sigma_{\wp} - 1 \right)$.
 This last statement is equivalent to $\sigma_{\wp}$ acting trivially on $\psi[m]$, and hence to
$\wp$ splitting completely in $K(\psi[m]) / K$.

\end{proof}

\begin{proposition}\label{split-divisibility}

Let $K$ be a finite field extension of $k$ and let $\psi \in \Drin_{A}(K)$ be of generic characteristic and 
 rank $r \geq 1$.
Let $\wp \in {\cal{P}}_{\psi}$ and  let $a \in A \backslash \F_q$ be such that $\gcd(a, p) = 1$, where $\wp \cap A = p A$.
If $\wp$ splits completely in $K(\psi[a])$, then $a^r | P_{\psi, \wp}(1)$.
\end{proposition}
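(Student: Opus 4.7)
The plan is to combine Proposition \ref{sc-iff} (which translates the splitting hypothesis into an $A$-submodule statement) with the multiplicativity of the Euler-Poincar\'{e} characteristic and the identification of $\chi(\psi(\F_{\wp}))$ with the ideal $P_{\psi, \wp}(1) A$ provided by Theorem \ref{gekeler1}(v).

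First, I would observe that, since $\gcd(a, p) = 1$ and $\wp$ splits completely in $K(\psi[a])/K$, Proposition \ref{sc-iff} applies with $m = a$, yielding an injection of $A$-modules
\[
(A/aA)^r \hookrightarrow \psi(\F_{\wp}).
\]
Writing $N := \psi(\F_{\wp})/(A/aA)^r$, the short exact sequence
\[
0 \longrightarrow (A/aA)^r \longrightarrow \psi(\F_{\wp}) \longrightarrow N \longrightarrow 0
\]
together with property (ii) of the definition of $\chi$ produces the divisibility of ideals $\chi\bigl((A/aA)^r\bigr) \mid \chi(\psi(\F_{\wp}))$ in $A$.

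Next, I would compute $\chi\bigl((A/aA)^r\bigr)$ explicitly. Using the prime factorization $a = \sgn(a) \cdot p_1^{e_1} \cdots p_t^{e_t}$ together with the Chinese Remainder Theorem, one has $A/aA \simeq_A \bigoplus_{i=1}^{t} A/p_i^{e_i} A$, and each $A/p_i^{e_i} A$ admits a composition series of length $e_i$ with simple factors $\simeq_A A/p_i A$. Iterating conditions (i) and (ii) in the definition of $\chi$ gives $\chi(A/aA) = aA$, whence $\chi\bigl((A/aA)^r\bigr) = a^r A$.

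Combining the previous two steps with Theorem \ref{gekeler1}(v), which gives $\chi(\psi(\F_{\wp})) = P_{\psi, \wp}(1) A$, yields $a^r A \mid P_{\psi, \wp}(1) A$ as ideals of $A$, and hence $a^r \mid P_{\psi, \wp}(1)$, as claimed. No real obstacle is anticipated here: the verification of $\chi((A/aA)^r) = a^r A$ is the only bookkeeping step, and Proposition \ref{sc-iff} and Theorem \ref{gekeler1}(v) do the essential algebraic and arithmetic work.
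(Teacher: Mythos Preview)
Your proof is correct and follows essentially the same approach as the paper: obtain the embedding $(A/aA)^r \hookrightarrow \psi(\F_{\wp})$ from the splitting hypothesis, then apply the Euler--Poincar\'{e} characteristic together with Theorem~\ref{gekeler1}(v). The only cosmetic difference is that the paper rederives the embedding directly via the Frobenius action (rather than citing Proposition~\ref{sc-iff}) and leaves the computation $\chi\bigl((A/aA)^r\bigr)=a^rA$ implicit, whereas you cite the proposition and spell out that computation.
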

\begin{proof}
Let again  $\pi_{\wp}$  be the Frobenius automorphism of $\psi(\F_{\wp)}$.
Since $\wp$ splits completely in $K(\psi[a])$, $\sigma_{\wp}$ acts trivially on $\psi[a]$, and so
$(\psi \otimes \F_{\wp})[d] \leq_A \Ker (\pi_{\wp} -1)$.
Recalling the structure of the torsion of $\psi \otimes \F_{\wp}$, we deduce that  
$\psi(\F_{\wp})$ contains an isomorphic copy of $(A/d A)^r$.  By taking the Euler-Poincar\'{e} characteristic and  by invoking part (v) of Proposition \ref{gekeler1}, we then deduce the desired divisibility relation.
\end{proof}

By combining Proposition \ref{split-divisibility} with the results of \cite{He} providing a Drinfeld module   analoque of the 
Weil pairing for elliptic curves, we obtain: 
\begin{theorem}(Properties of primes splitting completely in division fields)
\label{split-all}

\noindent
Let $K$ be a finite field extension of $k$ and let  $\psi \in \Drin_{A}(K)$ be of generic characteristic and rank $r \geq 2$.  
Then there exists $\psi^{1} \in \Drin_{A}(K)$, of generic characteristic and of  rank 1, uniquely determined up to $\overline{K}$-isomorphism, such that:
\begin{enumerate}
\item[(i)]
${\mathcal{P}}_{\psi} \subseteq \mathcal{P}_{\psi^1}$;
\item[(ii)]
for any $\wp \in {\mathcal{P}}_{\psi}$, the characteristic polynomials of $\psi$ and $\psi^1$ at $\wp$ satisfy the relation:
$$
P_{\psi, \wp}(X) = X^r + a_{r-1, \wp}(\psi) X^{r-1} + \ldots + a_1(\psi, \wp) X + u_{\wp}(\psi)p^{m_{\wp}},
$$
$$
P_{\psi^1, \wp}(X) = X + (-1)^{r-1} u_{\wp}(\psi) p^{m_{\wp}},
$$
where  $u_{\wp}(\psi) \in \F_q^{\ast}$;
\item[(iii)]
for any $\wp \in \mathcal{P}_{\psi}$ and any  $a \in A \backslash \F_{q}$ coprime to $p$, 
where $\wp \cap A = p A$,
 if $\wp$ splits completely in $K(\psi[a])$, then:
\begin{enumerate} 
\item[(iii1)]
$\wp$ also splits completely in $K(\psi^{1}[a])$;
\item[(iii2)]  
$a^r | P_{\psi, \wp}(1)$;
\item[(iii3)]
$a | P_{\psi^{1}, \wp}(1)$.
\end{enumerate}
\end{enumerate}
\end{theorem}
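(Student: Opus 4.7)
The plan is to construct $\psi^1$ as the Drinfeld module analogue of the top exterior power $\wedge^r \psi$, following the ``determinant''/Weil pairing construction developed in \cite{He}. This produces a rank $1$ Drinfeld $A$-module over $K$ (uniquely determined up to $\overline{K}$-isomorphism) whose Tate modules satisfy $T_{\mathfrak{l}}(\psi^1) \simeq \wedge_{A_{\mathfrak{l}}}^r T_{\mathfrak{l}}(\psi)$ as $G_K$-modules for every non-zero prime $\mathfrak{l}$ of $A$, with Galois action on the right given by the determinant of the action on $T_{\mathfrak{l}}(\psi)$. Correspondingly, at the level of finite division, $\psi^1[a] \simeq \wedge_A^r \psi[a]$ for every $a \in A \backslash \F_q$ coprime to the characteristic of the residue field under consideration, again with Galois action by determinant.

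For part (i), I would invoke the N\'{e}ron--Ogg--Shafarevich criterion (Theorem \ref{takahashi}): $\wp \in \mathcal{P}_{\psi}$ if and only if $\psi[\mathfrak{l}^\infty]$ is unramified at $\wp$ for some prime $\mathfrak{l} \neq \wp \cap A$. Since taking the determinant of an unramified Galois representation yields an unramified one, $T_{\mathfrak{l}}(\psi^1)$ is also unramified at $\wp$, so $\wp \in \mathcal{P}_{\psi^1}$ by the rank $1$ case of the same criterion. For part (ii), the identity $\rho_{\psi^1, \mathfrak{l}}(\sigma_\wp) = \det \rho_{\psi, \mathfrak{l}}(\sigma_\wp)$ immediately gives $P_{\psi^1, \wp}(X) = X - \det \rho_{\psi, \mathfrak{l}}(\sigma_\wp)$, while expanding $P_{\psi, \wp}(X) = \det(X \cdot \mathrm{Id} - \rho_{\psi, \mathfrak{l}}(\sigma_\wp))$ shows that its constant term equals $(-1)^r \det \rho_{\psi, \mathfrak{l}}(\sigma_\wp)$. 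Comparing with Theorem \ref{gekeler1}(ii), which identifies this constant term with $u_\wp(\psi) p^{m_\wp}$, we deduce $\det \rho_{\psi, \mathfrak{l}}(\sigma_\wp) = (-1)^r u_\wp(\psi) p^{m_\wp}$, and hence the formula $P_{\psi^1, \wp}(X) = X + (-1)^{r-1} u_\wp(\psi) p^{m_\wp}$.

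For part (iii), suppose $\wp$ splits completely in $K(\psi[a])$. Then $\sigma_\wp$ acts trivially on $\psi[a]$, and since $\psi^1[a] \simeq \wedge_A^r \psi[a]$ with Galois action by determinant, $\sigma_\wp$ acts trivially on $\psi^1[a]$ as well, establishing (iii1). Claim (iii2) is immediate from Proposition \ref{split-divisibility} applied to $\psi$, and (iii3) follows by applying the same proposition to the rank $1$ module $\psi^1$ together with (iii1). The main obstacle I anticipate is verifying that van der Heiden's exterior power construction actually descends to produce a Drinfeld module already defined over $K$ (as opposed to only over some finite extension) and that the compatibility $\rho_{\psi^1, \mathfrak{l}} = \det \rho_{\psi, \mathfrak{l}}$ holds with the prescribed normalization of signs and leading units; verifying these functoriality claims carefully, and checking uniqueness up to $\overline{K}$-isomorphism (which ultimately reduces to the fact that a rank $1$ Drinfeld module is determined by its associated Galois representation via the Tate conjecture \ref{tate-conjecture}), will be the most delicate part of the argument.
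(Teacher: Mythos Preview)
Your proposal is correct and follows essentially the same approach the paper indicates: the paper's own proof consists of a citation to \cite[Proposition~10]{CoSh}, but the sentence preceding the theorem statement already signals that the argument combines Proposition~\ref{split-divisibility} with van der Heiden's Weil pairing construction \cite{He}, which is precisely your plan. Your outline of parts (i)--(iii) via the determinant/exterior-power relation $T_{\mathfrak{l}}(\psi^1) \simeq \wedge^r T_{\mathfrak{l}}(\psi)$ and the N\'eron--Ogg--Shafarevich criterion is the standard route, and the obstacles you flag (descent of the construction to $K$, sign normalization, uniqueness via Tate) are exactly the technical points that \cite{CoSh} handles.
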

\begin{proof}
See \cite[Proposition 10]{CoSh}.
\end{proof}

\section{The Chebotarev density theorem}

Let $K$ be a finite field extension of $k$ and let $K'/K$ be a finite Galois extension.
In this section, we recall an effective version of the Chebotarev Density Theorem for $K'/K$,
 as proven in \cite{MuSc}.

 Let $g_{K'}$ and $g_K$ be the genera of $K'$ and $K$, respectively, and
let $c_{K'}$ denote the degree of the constant field $\F_{K'}$ of $K'$ over $\F_K$, that is,
$$c_{K'} := [K' \cap \overline{\F}_K : \F_K ].$$
Let
\[D := 
\ds\sum_{ \wp \textrm{ ramified in } K'/K}
\deg_K \wp.
\]

Let $x \in \N$ and set
\[\Pi(x; K'/K) := \#\{\wp \; \textrm{ unramified in } K'/K : \deg_K \wp = x\}.\]

For $C \subseteq \Gal(K'/K)$  a conjugacy class,  set
\[\Pi_C(x; K'/K)
:= \#\{\wp \; \textrm{ unramified in } K'/K : \deg_K \wp = x, \sigma_{\wp} = C\},\]
 where $\sigma_{\wp}$ is the
Frobenius at $\wp$ in $K'/K$. Note that, in particular,
 $\Pi_1(x; K'/K)$ denotes the number of degree $x$ primes of $K$ which split completely in $K'$.
Let $a_C \in \N$ be defined by the property that the restriction of $C$ to $\F_{K'}$  is $\tau^{a_C}$.

\begin{theorem}\label{chebotarev} \cite[Theorem 1, p. 524]{MuSc}

\noindent
We keep the above setting and notation.
\begin{enumerate}
\item[(i)] If $x \not\equiv a_C (\mod c_{K'})$, then $\Pi_C(x;K'/K) = 0$.
\item[(ii)] If $x \equiv a_C (\mod c_{K'})$, then
\[
\left| \Pi_C(x;K'/K) - c_{K'} \frac{|C|}{|G|} \Pi(x;K'/K) \right|
\leq
2 g_{K'} \frac{|C|}{|G|} \frac{q^{\frac{c_K x}{2}}}{x} + 2(2g_K+1) |C| \frac{q^{\frac{c_K x}{2}}}{x} + \left(1 + \frac{|C|}{x} \right) D.
\]
    \end{enumerate}
\end{theorem}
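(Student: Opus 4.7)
The plan is to establish Theorem \ref{chebotarev} via a character-theoretic reduction to Artin L-functions over $K$, followed by an application of Weil's Riemann hypothesis for curves over finite fields.

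Set $G := \Gal(K'/K)$.  Character orthogonality gives
\[
\Pi_C(x; K'/K)
=
\frac{|C|}{|G|} \ds\sum_{\chi \in \widehat{G}} \overline{\chi(C)}\, S_\chi(x),
\qquad
S_\chi(x) := \ds\sum_{\substack{\wp \text{ unramified in } K'/K \\ \deg_K \wp = x}} \chi(\sigma_\wp).
\]
The contribution of the trivial character is $(|C|/|G|)\,\Pi(x; K'/K)$.  The factor $c_{K'}$ in the main term of (ii) arises by separating out the one-dimensional characters of $G$ that factor through the constant-field quotient $\Gal(K'/K) \twoheadrightarrow \Gal(\F_{K'}/\F_K) \cong \Z/c_{K'}\Z$: their joint contribution equals $c_{K'} \cdot (|C|/|G|) \,\Pi(x; K'/K)$ when $x \equiv a_C \pmod{c_{K'}}$, and vanishes by cyclic character orthogonality otherwise, giving statement (i).

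For each remaining nontrivial irreducible character $\chi$, introduce the Artin L-function
\[
L(u,\chi)
:=
\ds\prod_{\wp} \det\!\left(1 - \rho_\chi(\sigma_\wp)\, u^{\deg_K \wp}\right)^{-1},
\]
with the convention at ramified primes of restricting $\rho_\chi$ to inertia invariants.  By Weil's theorem, $L(u,\chi)$ is a polynomial whose inverse roots $\alpha_{j,\chi}$ satisfy $|\alpha_{j,\chi}| = q^{c_K/2}$, with the number of roots $N_\chi$ bounded by $2 g_K \dim \chi + 2\,\mathfrak{f}_\chi$, where $\mathfrak{f}_\chi$ is the degree of the Artin conductor.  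Reading the coefficient of $u^x$ in the logarithmic derivative $-u L'(u,\chi)/L(u,\chi)$ yields
\[
x\, S_\chi(x) = -\ds\sum_j \alpha_{j,\chi}^x + \O\!\left((\dim\chi)\, D\right),
\]
whence $|S_\chi(x)| \leq N_\chi\, q^{c_K x/2}/x + \O((\dim\chi)\, D/x)$.  The conductor--discriminant identity
\[
2 g_{K'} - 2 = |G|(2 g_K - 2) + \ds\sum_\chi (\dim \chi)\, \mathfrak{f}_\chi
\]
converts the aggregated conductor contributions, once reweighted by $\overline{\chi(C)}$ and summed, into the term $2 g_{K'}\,|C|/|G| \cdot q^{c_K x/2}/x$, while the $(2g_K + 1)\dim\chi$ pieces combined with $\sum_\chi |\chi(C)|^2 = |G|/|C|$ and Cauchy--Schwarz yield the summand $2(2g_K+1)|C|\cdot q^{c_K x/2}/x$; the trivial estimate on ramified local factors assembles into the $(1 + |C|/x)\,D$ tail.

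The principal obstacle is the sharpness bookkeeping: one must split $\widehat{G}$ cleanly between the cyclotomic characters (which supply the leading $c_{K'}$ factor along with the congruence condition in (i)) and the remainder, then aggregate individual L-function error contributions using the conductor--discriminant identity so that the constant $2$ appears in front of $g_{K'}$ rather than a looser $\O(g_{K'})$ bound.  Equally delicate is ensuring that the ramified local factors and the failure of the Euler product at ramified places are absorbed \emph{exactly} into the $D$-term with the stated prefactor $(1 + |C|/x)$, rather than leaking into the $q^{c_K x/2}$ part of the error.
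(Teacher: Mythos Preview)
The paper does not prove Theorem~\ref{chebotarev} at all: it is quoted verbatim from \cite[Theorem~1, p.~524]{MuSc} and used as a black box. There is therefore no ``paper's own proof'' to compare against; your proposal is an attempt to reconstruct the argument of Murty--Scherk rather than anything the present authors do.

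That said, your outline is the standard route and is essentially the one taken in \cite{MuSc}: decompose the counting function via characters of $G$, isolate the characters factoring through $\Gal(\F_{K'}/\F_K)$ to produce both the congruence obstruction in (i) and the factor $c_{K'}$ in (ii), express $S_\chi(x)$ through the logarithmic derivative of the Artin $L$-function $L(u,\chi)$, invoke Weil to bound the inverse roots by $q^{c_K/2}$, and then use the conductor--discriminant relation to pass from $\sum_\chi (\dim\chi)\,\mathfrak{f}_\chi$ to the genus $g_{K'}$. Your own final paragraph correctly identifies where the genuine work lies: getting the sharp constants $2g_{K'}$, $2(2g_K+1)$, and $(1+|C|/x)$ requires careful bookkeeping of the degree of each $L(u,\chi)$ (which is $(2g_K-2)\dim\chi + \deg\mathfrak{f}_\chi$ for the non-constant characters, not the expression you wrote), together with a precise accounting of the ramified Euler factors and of the higher prime-power contributions to $-u L'/L$. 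As written, your sketch asserts these aggregations rather than carrying them out, so it is a correct plan but not yet a proof; for the present paper's purposes, citing \cite{MuSc} is all that is required.
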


Our main application of Theorem \ref{chebotarev} is when $K'$ is a division field of
a generic Drinfeld module  $\psi \in \Drin_A(K)$ and when  $C= \{1\}$.  For ease of understanding, we record a restatement of this theorem in our desired setting:

\begin{theorem}\label{cheb-dm}
Let $K$ be a finite field extension of $k$ and let $\psi \in \Drin_A(K)$ be of generic characteristic.
Let $a \in A \backslash \F_q$. Let $x \in \N$ and define
 \begin{equation} \label{defn-of-ca(x)}
  c_a(x)
  :=
  \left\{ \begin{array}{cl}
 c_a & \textrm{if } c_a | x, \\
  0  &    \textrm{else},     \\
\end{array}
 \right. \end{equation}
 where $c_a$ denotes the degree of the constant field extension of $K(\psi[a])$ over $\F_K$ 
 (see (\ref{c-a})).
Then
$$
\Pi_1(x; K(\psi[a])/K)
=
\frac{c_a(x)}{[K(\psi[a]) : K]} \cdot  \frac{q^{c_K x}}{x}
+
\O_{\psi, K}\left(\frac{q^{\frac{c_K x}{2}}}{x} \deg a\right).
$$
\end{theorem}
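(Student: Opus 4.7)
The plan is to apply Theorem \ref{chebotarev} directly with $K' = K(\psi[a])$, Galois group $G = \Gal(K(\psi[a])/K)$, and conjugacy class $C = \{1\}$. Then $|C| = 1$ and $c_{K'} = c_a$; since the identity automorphism restricts to the identity on the constant field $\F_{K(\psi[a])}$, the integer $a_C$ from Theorem \ref{chebotarev} is divisible by $c_a$, so the congruence condition reduces to $c_a \mid x$. In the case $c_a \nmid x$, part (i) of Theorem \ref{chebotarev} instantly yields $\Pi_1(x; K(\psi[a])/K) = 0$, which matches the stated formula because $c_a(x) = 0$ by definition.

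For $c_a \mid x$, part (ii) of Theorem \ref{chebotarev} gives
\[
\left| \Pi_1(x; K(\psi[a])/K) - \frac{c_a}{[K(\psi[a]):K]} \, \Pi(x; K(\psi[a])/K) \right|
\leq
\frac{2 g_{K(\psi[a])}}{[K(\psi[a]):K]} \frac{q^{c_K x/2}}{x} + 2(2 g_K + 1) \frac{q^{c_K x/2}}{x} + \left(1 + \frac{1}{x}\right) D.
\]
To control the genus term, I would invoke Proposition \ref{genus-proposition}, which delivers $g_{K(\psi[a])}/[K(\psi[a]):K] \leq G(\psi, K) \deg a$; the first error contribution is therefore $\ll_{\psi,K} (\deg a) q^{c_K x/2}/x$. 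To bound $D$, I would invoke Theorem \ref{takahashi}: a prime $\wp$ of $K$ ramifies in $K(\psi[a])/K$ only if either $\wp$ belongs to the finite set of primes of bad reduction of $\psi$ (contributing $\O_{\psi,K}(1)$), or $\wp$ lies above a rational prime $\fl = \ell A$ dividing $a$. The latter contributes $\sum_{\fl \mid a} \sum_{\wp \mid \fl} \deg_K \wp \ll_{K} \deg a$, so $D \ll_{\psi, K} \deg a$.

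Finally, I would replace $\Pi(x; K(\psi[a])/K)$ by $\pi_K(x)$: the discrepancy is at most the number of ramified primes of $K$ of degree $x$, which is itself $\ll_{\psi, K} \deg a$ by the same ramification analysis, and then apply the effective Prime Number Theorem (\ref{PNT}) to write $\pi_K(x) = q^{c_K x}/x + \O_K(q^{c_K x/2}/x)$. All the secondary contributions are dominated by the principal error, so everything collapses into the single bound $\O_{\psi, K}\bigl((\deg a) q^{c_K x/2}/x\bigr)$, producing the asserted asymptotic. There is no substantive obstacle; this is essentially a bookkeeping argument, and the only point requiring care is verifying that both the genus bound (Proposition \ref{genus-proposition}) and the ramification bound (Theorem \ref{takahashi}) scale linearly in $\deg a$, so that all three error terms in Theorem \ref{chebotarev} can be absorbed uniformly into a single factor of $\deg a$.
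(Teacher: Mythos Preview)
Your proposal is correct and follows essentially the same approach as the paper: apply Theorem~\ref{chebotarev} with $K' = K(\psi[a])$ and $C = \{1\}$, use Proposition~\ref{genus-proposition} to bound the genus term by $G(\psi,K)\deg a$, use Theorem~\ref{takahashi} to bound $D \ll_{\psi,K} \deg a$, and invoke the Prime Number Theorem~(\ref{PNT}) to pass from $\Pi(x;K(\psi[a])/K)$ to $q^{c_K x}/x$. The paper's proof is slightly more compressed (it absorbs the passage from $\Pi$ to $\pi_K$ directly via~(\ref{PNT}) without separately bounding the ramified-prime discrepancy), but the substance is identical.
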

\begin{proof}  Using the effective Prime Number Theorem for $K$ (see (\ref{PNT})) and Theorem \ref{chebotarev} with $K' = K(\psi[a])$, $C = \{1\}$, and hence with 
$a_C = 0$, we obtain
\begin{eqnarray*}
\Pi_1(x;K(\psi[a])/K)
& = &
\frac{c_a(x)}{[K(\psi[a]) : K]} \cdot \frac{q^{c_K x}}{x}
+
 \O\left(
 \left( 2 g_a \cdot \frac{1}{[K(\psi[a]) : K]} + 2(2g_K +1) \right) \cdot \frac{q^{\frac{c_K x}{2}}}{x}
 +
 \left( 1 + \frac{1}{x} \right) D
 \right),
\end{eqnarray*}
where
 \[
 D := \ds\sum_{\wp \in {\cal{P}}_{\psi} \atop{ \wp \textrm{ ramified in } K(\psi[a]) / K}} \deg_K \wp.
 \]
By Theorem \ref{takahashi},   $D \ll_{K} \deg a$.  
 Combining this with  Proposition \ref{genus-proposition}, we obtain
 \begin{eqnarray*}
 \left( 2 g_a \cdot \frac{1}{[K(\psi[a]) : K]} + 2(2g_K +1) \right) \cdot \frac{q^{\frac{c_K x}{2}}}{x}
 +
 \left( 1 + \frac{1}{x} \right) D
 & \ll_K &
 \frac{G(\psi,K) \cdot [K(\psi[a]) : K] \cdot \deg a}{[K(\psi[a]) : K]} \cdot \frac{q^{\frac{c_K x}{2}}}{x}
 \\
 &&
 + \frac{x+1}{x} \cdot \deg a \\ & \ll_K & G(\psi,K) \cdot \frac{q^{\frac{c_K x}{2}}}{x} \cdot \deg a.
 \end{eqnarray*}
\end{proof}


\section{Proof of Theorems \ref{thm1} and \ref{thm2}}

Let $K/k$ be a finite field extension and let $\psi : A \longrightarrow K\{\tau\}$ be a generic Drinfeld 
$A$-module over $K$, of rank $r \geq 2$.
Let $d \in A^{(1)}$.
For a fixed $x \in \N$, let 
$$
\newd(\psi, x) := \#\{\wp \in {\cal{P}}_{\psi} : \deg_K \wp = x,\ d_{1,\wp}(\psi) = d\}.
$$
Our first goal is to derive an asymptotic formula for this function, as $x \rightarrow \infty$.

We start by noting that,  for any prime $\wp \in {\cal{P}}_{\psi}$ such that $\gcd(d,p) = 1$ (where, as usual, $\wp \cap A = pA$), we have 
$d = d_{1,\wp}(\psi)$ if and only if 
$\psi(\F_{\wp}) \geq_A (A / dA)^r$ 
and 
$\psi(\F_{\wp}) \not\geq_A (A / \ell d A)^r$ for all primes $\ell \in A^{(1)}$.  
Hence, by the inclusion-exclusion principle and by Proposition \ref{sc-iff}, 
\begin{equation} \label{inc-exc-result} 
\newd(\psi, x) = 
\ds\sum_{
\substack{m \in A^{(1)} \\ \deg m \leq \frac{c_K x}{r}}
} \mu_A(m) \Pi_1(x;K(\psi[md]) / K),
\end{equation} 
where,
for square-free $m$,  the field extension $K(\psi[md]) / K$  is obtained via field composition
\[
\ds\prod_{\substack{\ell | m \\ \ell \text{ prime}}} K(\psi[\ell d]) = K(\psi[\lcm(\ell d : \ell | m)]) = K(\psi[md]),
\] 
and where, we recall, 
 \[
 \Pi_1(x;K(\psi[md]) / K) 
 := 
 \#\{
 \wp \in {\cal{P}}_{\psi} : \deg_K \wp = x, \wp \textrm{ splits completely in } K(\psi[md]) / K
 \}.
 \]

The range of $\deg m$ in the summation on the right-hand side of (\ref{inc-exc-result})  is derived from
the condition that, if  $\wp$ splits completely in $K(\psi[m d])$, then 
 $$
 (A/m d A)^r \leq_A \psi(\F_{\wp}).
 $$
 This gives  the divisibility relation 
 $$
 m^r d^r | \chi(\psi(\F_{\wp}))
 $$
 obtained by taking Euler-Poincar\'{e} characteristic on both sides.
Indeed,  since $|\chi(\psi(\F_{\wp}))|_{\infty} = |\wp|_{\infty} = q^{c_K x}$, the above gives
 \begin{equation}\label{degree}
 \deg m \leq \frac{c_K x}{r}.
 \end{equation}

 The obvious tool in estimating  $\newd(\psi, x)$ is the effective Chebotarev Density Theorem (Theorem \ref{cheb-dm}). However, for $r = 2$,  this is insufficient. As such, we split the sum into two parts,  apply Theorem \ref{cheb-dm} to the first, and find a different approach for  the second. To be precise, we write 
\begin{equation}\label{d-first-splitting}
\newd(\psi, x) = \newd_1(\psi, x, y) + \newd_2(\psi, x, y)
\end{equation}
for some positive real number $y = y(x)$, to be chosen optimally later, 
where 
\[
\newd_1(\psi, x, y) := 
\ds\sum_{\substack{m \in A^{(1)} \\ \deg m \leq y}}
 \mu_A(m) \Pi_1(x;K(\psi[md]) / K)
 \] 
 and 
 \[
 \newd_2(\psi, x, y) := 
 \ds\sum_{\substack{m \in A^{(1)} \\ y < \deg m \leq \frac{c_K x}{r}}} 
 \mu_A(m) \Pi_1(x;K(\psi[md]) / K).
 \]  
By applying Theorem \ref{cheb-dm} and Lemma \ref{lemma1}, we obtain 
\begin{equation} \label{error-f1} 
\newd_1(\psi, x, y) = 
\frac{q^{c_K x}}{x} 
\sum_{\substack{m \in A^{(1)} \\ \deg m  \leq y}} \frac{\mu_A(m) c_{md}(x)}{[K(\psi[md]) : K]} 
+
 \O_{\psi,K, d} \left(\frac{q^{\frac{c_K x}{2} + y} y}{x} \right).
\end{equation}

Now let 
$$\gamma := \text{rank}_{A}  \; \End_{\overline{K}}(\psi).$$
By  Proposition \ref{constant-field-size-proposition},  Theorem \ref{division-degree}, and Lemma \ref{lemma2} (for which we are also using that $\gamma \leq r$),
we obtain
\begin{equation}\label{tail-d}
\ds\sum_{m \in A^{(1)} \atop{\deg m > y}}
\frac{\mu_A(m) c_{m d}(x)}{[K(\psi[md]) : K]}
\ll_{\psi, K, d}
\ds\sum_{m \in A^{(1)} \atop{\deg m > y}}
\frac{\log \deg (m d) + \log \log q}{q^{\frac{r^2 \deg (m d)}{\gamma}}}
\ll
\frac{\log y}{q^{\left(\frac{r^2}{\gamma} - 1\right) y } }.
\end{equation}
Thus
\begin{equation}\label{estimate-d1}
\newd_1(\psi,  x, y) 
= 
\frac{q^{c_K x} }{x} 
\ds\sum_{m \in A^{(1)} }
 \frac{ \mu_A(m) c_{md}(x) }{ [K(\psi[md]) : K] } 
+
 \O_{\psi,K, d} \left(\frac{   q^{ \frac{c_K x}{2} + y} y}{x}\right)
 +
 \O_{\psi, K}
 \left(
 q^{  c_K x - \left(  \frac{r^2}{\gamma} - 1  \right) y    }  
 \log y
 \right).
\end{equation}
Observe that by choosing $y := \frac{1}{r} c_K x$, if $r \geq 4$, then the above estimate already proves the first part of the theorem. The following discussion thus pertains to the case $r \leq 3$.

To estimate $\newd_2(\psi, x, y)$  from above,  we make use of van der Heiden's construction of the analogue of the Weil pairing for $\psi$, as well as of the average over $m$.
More precisely, we appeal to Theorem \ref{split-all} and rely on the properties of  the rank 1 Drinfeld $A$-module $\psi^1 \in \Drin_A(K)$ associated to $\psi$, as follows. 

By part (iii1) of Theorem \ref{split-all},
if $\wp$ splits completely in $K(\psi[md])/K$, then $\wp$ splits completely in $K(\psi^1[md])/K$.
Using the characteristic polynomials at $\wp$ associated to $\psi$ and $\psi^1$, 
by part (ii) of Theorem  \ref{split-all},  this implies that
$$m^r d^r | 1 + a_{\wp}(\psi) + u_{\wp}(\psi) p^{m_{\wp}}$$
and
$$m d | 1 + (-1)^{r-1} u_{\wp}(\psi) p^{m_{\wp}},$$
where, we recall,
$$
P_{\wp, \psi}(X) = X^r + a_{r-1, \wp}(\psi) X^{r-1} + \ldots + a_{1, \wp}(\psi) X + u_{\wp}(\psi) p^{m_{\wp}} \in A[X]
$$
with $u_{\wp}(\psi) \in \F_q^*$, and where we define
$$
a_{\wp}(\psi) := a_{r-1, \wp}(\psi) + \ldots + a_{1, \wp}(\psi).
$$
By  part (iv) of Theorem \ref{gekeler1}, we obtain that
$$
\deg a_{\wp}(\psi) \leq \frac{(r-1) c_K \deg_K \wp}{r}.
$$
Therefore
\begin{eqnarray*}
\left| \newd_2(\psi, x, y)\right|
 & \leq &
 \ds\sum_{\substack{m \in A^{(1)} \\ y < \deg m \leq \frac{c_K x}{r}}}
 \ds\sum_{u \in \F_q^*}
\ds\sum_{\substack{a \in A \\ \deg a \leq \frac{(r-1) c_K x}{r}}}
\ds\sum_{\substack{\wp \in \PP_{\psi} \\ \deg_K \wp = x 
\\ a_{\wp}(\psi) = a, u_{\wp}(\psi)  = u
\\ m^r d^r | 1 + a_{\wp}(\psi) + u_{\wp}(\psi) p^{m_{\wp}}
 \\ m d | 1 + (-1)^{r-1}
u_{\wp}(\psi) p^{m_{\wp}}}} 1. 
\end{eqnarray*}
To simplify notation, for each $a \in A$ let us define
\begin{equation} \label{defn-of-tilde}
\tilde{a} :=
\left\{ \begin{array}{cc}
2 + a & \textrm{if $r$ even} , \\
 a & \textrm{if $r$ odd}.
\end{array}
 \right.
\end{equation}
Thus
$$
\left|\newd_2(\psi, x, y)\right|
\leq
\ds\sum_{\substack{m \in A^{(1)} \\ y < \deg m \leq \frac{c_K x}{r}}}
\ds\sum_{u \in \F_q^*}
\ds\sum_{\substack{a \in A \\ \deg a \leq \frac{(r-1) c_K x}{r} \\ m d | \tilde{a} }}
\ds\sum_{\substack{\wp \in \PP_{\psi} \\ \deg_K \wp = x \\ 
m^r d^r | 1 + a + u p^{m_{\wp}}}}
1.
$$
We now consider the innermost sum above.
Using diagram (\ref{picture-residue-field}), we see that
$$
\deg p^{m_{\wp}} = m_{\wp} \deg p = c_K \deg_K \wp = c_K x.
$$
We also see that, by part (i) of  Lemma \ref{lemma1},
for fixed $a \in A$ and $u \in \F_q^*$,
there exist at most  $q^{c_K x - r \deg m + 1}$ primes $p \in A$ of degree $\frac{c_K x}{m_{\wp}}$ such that
$m^r | 1 + a + u p^{m_{\wp}}$.
Indeed, this reduces to counting polynomials in $A$ of degree 
$\deg (1 + a + u p^{m_{\wp}}) - r \deg m = c_K x - r \deg m$.
But there are at most $[K:k]$ primes in $K$ lying above a fixed prime $p \in A$.
Therefore
$$
\ds\sum_{\substack{\wp \in \PP_{\psi} \\ \deg_K \wp = x \\ m^r | 1 + a + u
p^{m_{\wp}}}}
1
\ll_K
q^{c_K x - r \deg m + 1}
\ll
q^{c_K x - r \deg m}.
$$
Continuing, we deduce that
\begin{eqnarray*}
\newd_2(\psi, x, y)
& \ll_{K, d} &
\ds\sum_{\substack{m \in A^{(1)} \\ y < \deg m \leq \frac{c_K x}{r}}}
\ds\sum_{u \in \F_q^*}
\ds\sum_{\substack{a \in A \\ \deg a \leq \frac{(r-1) c_K x}{r} \\ m | \tilde{a} }} 
q^{c_K x - r \deg m
}
\\
& = &
q^{c_K x
}
\ds\sum_{\substack{m \in A^{(1)} \\ y < \deg m \leq \frac{c_K x}{r}}} q^{-r \deg m}
\ds\sum_{u \in \F_q^*}
\ds\sum_{\substack{a \in A \\ \deg a \leq \frac{(r-1) c_K x}{r} \\
\tilde{a} \neq 0
\\
m| \tilde{a}
}}
1
\\
&&
+\ q^{c_K x 
}
\ds\sum_{\substack{m \in A^{(1)} \\ y < \deg m \leq \frac{c_K x}{r}}} q^{-r \deg m}
\ds\sum_{u \in \F_q^*}
\ds\sum_{\substack{a \in A \\ \deg a \leq \frac{(r-1) c_K x}{r} \\
\tilde{a} = 0
}}
1
\\
& =: &
\newd_{2,1}(\psi, x, y) + \newd_{2,2}(\psi, x, y).
\end{eqnarray*}
We will estimate these two sums from above using Lemmas \ref{lemma1} and \ref{lemma2}, where, for the latter, we will be implicitly also using  that $\gamma \leq r$.

To estimate $\newd_{2, 1}(\psi, x, y)$ from above,  we note that
$$
\newd_{2, 1}(\psi, x, y)
\leq
q^{c_K x + 1}
\ds\sum_{
m \in A^{(1)}
\atop{
y < \deg m \leq \frac{c_K x}{r}
}
}
q^{- r \deg m}
\ds\sum_{
\alpha \in A
\atop{
\deg \alpha \leq \frac{(r - 1) c_K x}{r} - \deg m
}
}
1,
$$
which, by part (i) of Lemma \ref{lemma1} and part (i) of Lemma \ref{lemma2}, is
$$
\ll
q^{c_K x + 1}
\ds\sum_{
m \in A^{(1)}
\atop{
y < \deg m \leq \frac{c_K x}{r}
}
}
q^{-r \deg m}
q^{\frac{(r - 1) c_K x}{r} - \deg m}
\ll
q^{\frac{(2 r - 1) c_K x}{r} - r y}.
$$

To estimate $\newd_{2, 2}(\psi, x, y)$ from above, we note that its innermost sum has only one term, hence, by part (i) of Lemma \ref{lemma2},
$$
\newd_{2, 2}(\psi, x, y) \ll q^{c_K x - (r - 1) y}.
$$

Combining these estimates,
we obtain that
$$
\newd_{2}(\psi, x, y) \ll_{K, d}
q^{ \frac{(2 r - 1) c_K x}{r} - r y}
+
q^{c_K x - (r-1)y}.
$$
Plugging this back into (\ref{d-first-splitting}) and appealing to  (\ref{estimate-d1}), we  deduce that
\begin{eqnarray}\label{estimateDwithy}
&&
\newd(\psi,  x) 
=
\frac{q^{c_K x} }{x} 
\ds\sum_{m \in A^{(1)} }
 \frac{ \mu_A(m) c_{md}(x) }{ [K(\psi[md]) : K] } 
 \nonumber
 \\
&+&
 \O_{\psi,K, d} \left(\frac{   q^{ \frac{c_K x}{2} + y} y}{x}\right)
 +
 \O_{\psi, K}
 \left(
q^{  c_K x - \left(  \frac{r^2}{\gamma} - 1  \right) y    }  
 \log y
 \right)
 +
 \O_{K, d}\left(q^{ \frac{(2 r - 1) c_K x}{r} - r y}
+
q^{c_K x - (r-1)y}\right).
\end{eqnarray}
Finally, we choose $y$ as follows:
\begin{equation} \label{choice-of-y}
y :=
\left\{ \begin{array}{cc}
\frac{3 r - 2}{2 r (r + 1)} c_K x & \textrm{if $r= 2, 3$}, \\
 \frac{1}{r} c_K x & \textrm{if $r \geq 4$}.
\end{array}
 \right.
\end{equation}
We plug this in (\ref{estimateDwithy}) if $r = 2, 3$, and in (\ref{estimate-d1}) if $r \geq 4$ (note that in this case $\newd(\psi, x) = \newd_1(\psi, x, y)$), obtaining  the effective asymptotic formulae:
\begin{equation}\label{effective-d}
\newd(\psi,  x) 
=
\frac{q^{c_K x} }{x} 
\ds\sum_{m \in A^{(1)} }
 \frac{ \mu_A(m) c_{md}(x) }{ [K(\psi[md]) : K] }  
 +
 \left\{ 
 \begin{array}{ccc}
\O_{\psi, K, d}\left(q^{\frac{5 c_K x}{6}}\right) & \textrm{if $r=2$}, \\
\O_{\psi, K, d}\left(q^{\frac{19 c_K x}{24}}\right) & \textrm{if $r=3$}, \\
\O_{\psi, K, d}\left(q^{\frac{(r+2) c_K x}{2 r}}\right) & \textrm{if $r \geq 4$}. 
\end{array}
 \right.
\end{equation}
This completes the first part of Theorem \ref{thm1}.

\begin{remark}
\noindent
\begin{enumerate}
\item[(i)]
Formula (\ref{effective-d}) is stronger than the asymptotic formula  (\ref{formula-thm1}) stated in Theorem \ref{thm1}, as it provides us with explicit error terms. Moreover, these error terms carry significant savings in powers of $q^{c_K x}$.
\item[(ii)]
For $r \geq 4$, the splitting of $\newd(\psi,  x)$ into two parts, as in (\ref{d-first-splitting}), is unnecessary. The proof of Theorem \ref{thm1} in this case is solely an application of the effective Chebotarev Density Theorem for the division fields of $\psi$. 
\item[(iii)]
For $r = 3$, the splitting of $\newd(\psi,  x)$ into two parts, as in (\ref{d-first-splitting}),  is also unnecessary in order to obtain the asymptotic formula (\ref{formula-thm1}). In our proof, we do so 
in order  to obtain a saving in the final error term:
$\O_{\psi, K, d}\left(q^{\frac{19 c_K x}{24}}\right)$ using  (\ref{d-first-splitting}) and the approach therein for estimating $\newd_2(\psi, x, y)$,
versus
$\O_{\psi, K, d}\left(q^{\frac{5 c_K x}{6}}\right)$  using only the effective Chebotarev Density Theorem 
(and the choice $y  := \frac{1}{3} c_K x$).
\item[(iv)]
The error terms in (\ref{effective-d}) may be improved with additional techniques. For example, for the case $q$ odd,  $r = 2$,  $\gamma = 2$,  and $ K = k$, in \cite{CoSh} we proved the formula
\begin{equation*}
\newd(\psi,  x) 
=
\frac{q^{x} }{x} 
\ds\sum_{m \in A^{(1)} }
 \frac{ \mu_A(m) c_{md}(x) }{ [k(\psi[md]) : k] }  
 +
\O_{\psi, k, d}\left(q^{\frac{3  x}{4}}\right). 
\end{equation*}
\end{enumerate}
\end{remark}

Our second goal is to prove that the Dirichlet density of the set
$
\{\wp \in {\cal{P}}_{\psi}: d_{1, \wp}(\psi) = d\}
$
exists and equals 
$
\ds\sum_{
m \in A^{(1)}
}
\frac{\mu_A(m)}{[K(\psi[m d]) : K]}.
$
For this, let $s > 1$ and consider the sum
\begin{eqnarray}\label{dirichlet1}
\ds\sum_{
\wp \in {\cal{P}}_{\psi}
\atop{
d_{1, \wp}(\psi) = d
}
}
q^{- s c_K \deg_K \wp}
&=&
\ds\sum_{x \geq 1}
q^{- s c_K x} \newd(\psi, x)
\nonumber
\\
&=&
\ds\sum_{m \in A^{(1)}}
\frac{\mu_A(m)}{[K(\psi[m d]) : K]}
\ds\sum_{x \geq 1}
\frac{q^{(1-s) c_K x} c_{m d}(x)}{x}
+
\O_{\psi, K}\left(
\ds\sum_{x \geq 1} q^{(\theta(r) - s) c_K x}
\right),
\end{eqnarray}
where we used (\ref{effective-d}) with
\begin{equation}
\theta(r) :=
 \left\{ 
 \begin{array}{ccc}
\frac{5}{6} & \textrm{if $r=2$}, \\
\frac{19}{24} & \textrm{if $r=3$}, \\
\frac{r+2}{2 r} & \textrm{if $r \geq 4$}. 
\end{array}
 \right.
\end{equation}

By the definition  (\ref{defn-of-ca(x)}) of $c_{m d}(x)$, (\ref{dirichlet1}) becomes
$$
=
\ds\sum_{m \in A^{(1)}}
\frac{\mu_A(m)}{[K(\psi[m d]) : K]}
\ds\sum_{j \geq 1}
\frac{q^{(1-s) c_K c_{m d} j}}{j}
+
\O_{\psi, K}\left(
\ds\sum_{x \geq 1} q^{(\theta(r) - s) c_K x}
\right).
$$
Since $s > 1$, this may be written as
$$
=
- \ds\sum_{m \in A^{(1)}}
\frac{\mu_A(m)}{[K(\psi[m d]) : K]}
\log \left(1 - q^{(1 - s) c_K c_{m d}} \right)
+
\O_{\psi, K}
\left(\frac{q^{(\theta(r) - s) c_K} }{1 - q^{(\theta(r) - s) c_K} }\right).
$$

We now calculate
\begin{eqnarray*}
\ds\lim_{s \rightarrow 1+}
\frac{
\ds\sum_{
\wp \in {\cal{P}}_{\psi}
\atop{
d_{1, \wp}(\psi) = d
}
}
q^{- s c_K \deg_K \wp}
}{- \log \left(1 - q^{(1 - s) c_K}\right)}
&=&
\ds\lim_{s \rightarrow 1+}
\left[
\ds\sum_{m \in A^{(1)}}
\frac{\mu_A(m)}{[K(\psi[m d]) : K]}
\frac{
\log \left(1 - q^{(1-s) c_K c_{m d}}\right)
}{
\log \left(1 - q^{(1-s) c_K}\right)
} 
\right.
\\
&&
+
\left.
\O_{\psi, K}
\left(
\frac{
q^{(\theta(r) - s) c_K}
}{
\left(1 - q^{(\theta(r) - s) c_K}\right)
\log \left(1 - q^{(1-s) c_K}\right)
}
\right)
\right]
\\
&=&
\ds\sum_{m \in A^{(1)}} \frac{\mu_A(m)}{[K(\psi[m d]) : K]},
\end{eqnarray*}
after an application of l'Hospital and elementary manipulations. This completes the proof of Theorem \ref{thm1}.

The proof of Theorem \ref{thm2} proceeds in the same way as that of the first part of Theorem \ref{thm1},
after replacing with 1 the factor $\mu_A(m)$ appearing in (\ref{inc-exc-result}), and, henceforth, in  
$\newd_1(\psi, x, y)$ and $\newd_2(\psi, x, y)$ of (\ref{d-first-splitting}). 
In particular, this approach leads to the asymptotic formulae:
\begin{eqnarray}\label{effective-thm2}
\ds\sum_{m \in A^{(1)}}
\Pi_1(x, K(\psi[m])/K)
&=&
\ds\sum_{
m \in A^{(1)}
\atop{
\deg m \leq \frac{c_K x}{r}
}
}
\Pi_1(x, K(\psi[m])/K)
\nonumber
\\
&=&
\frac{q^{c_K x} }{x} 
\ds\sum_{m \in A^{(1)} 
\atop{\deg m \leq \frac{c_K x}{r}}
}
 \frac{c_{m}(x) }{ [K(\psi[m]) : K] }  
 +
 \left\{ 
 \begin{array}{ccc}
\O_{\psi, K}\left(q^{\frac{5 c_K x}{6}}\right) & \textrm{if $r=2$}, \\
\O_{\psi, K}\left(q^{\frac{19 c_K x}{24}}\right) & \textrm{if $r=3$}, \\
\O_{\psi, K}\left(q^{\frac{(r+2) c_K x}{2 r}}\right) & \textrm{if $r \geq 4$}
\end{array}
 \right.
\\
&=&
\frac{q^{c_K x} }{x} 
\ds\sum_{m \in A^{(1)} }
 \frac{c_{m}(x) }{ [K(\psi[m]) : K] }  
 +
 \left\{ 
 \begin{array}{ccc}
\O_{\psi, K}\left(q^{\frac{5 c_K x}{6}}\right) & \textrm{if $r=2$}, \\
\O_{\psi, K}\left(q^{\frac{19 c_K x}{24}}\right) & \textrm{if $r=3$}, \\
\O_{\psi, K}\left(q^{\frac{(r+2) c_K x}{2 r}}\right) & \textrm{if $r \geq 4$}. 
\end{array}
 \right.
 \nonumber
\end{eqnarray}

\section{Proof of Theorem \ref{thm3}}

Let $K/k$ be a finite field extension and let $\psi : A \longrightarrow K\{\tau\}$ be a generic Drinfeld $A$-module over $K$, of rank 2. Let $\gamma := \text{rank}_{A}  \; \End_{\overline{K}}(\psi)$.

(i) Let $f : (0, \infty) \longrightarrow (0, \infty)$ be such that $\ds\lim_{x \rightarrow \infty} f(x) = \infty$.
For a fixed $x \in \N$, let
$$
{\mathcal{E}}(\psi,  x) 
=
{\mathcal{E}}_f(\psi,  x) 
 :=
\#\left\{
\wp \in {\cal{P}}_{\psi}: \deg_K \wp = x, |d_{2, \wp}(\psi)|_{\infty} > \frac{|\wp|_{\infty}}{q^{c_K f(x)}}
\right\}
$$
and
$$
{\it{e}}(\psi, x) 
=
{\it{e}}_f(\psi, x)
:=
\#\left\{
\wp \in {\cal{P}}_{\psi}: \deg_K \wp = x, |d_{2, \wp}(\psi)|_{\infty} < \frac{|\wp|_{\infty}}{q^{c_K f(x)}}
\right\}.
$$
Our goal is to derive an asymptotic formula for ${\mathcal{E}}(\psi,  x)$, as $x \rightarrow \infty$.
More precisely, our goal is to show that ${\mathcal{E}}(\psi,  x) \sim \pi_K(x)$, which is equivalent to 
showing that
$$
{\it{e}}(\psi,  x) = \o\left(\pi_K(x)\right).
$$

Note that, without loss of generality, we may assume that $f(x) < \frac{x}{2}$ for all $x$. Indeed, if 
$f_1, f_2 : (0, \infty) \longrightarrow (0, \infty)$ satisfy $f_2 \leq f_1$ and
 $\ds\lim_{x \rightarrow \infty} f_2(x) = \infty$, then
 $ {\mathcal{E}}_{f_2}(\psi,  x) 
 \leq
 {\mathcal{E}}_{f_1}(\psi,  x) 
 \leq
 \pi_K(x)$.
 Thus, for the purpose of proving part (i) of Theorem \ref{thm3}, we may 
 replace $f(x)$ by $\min \left\{f(x), \frac{x}{2} - 1\right\}$.

We start with the partition
$$
{\it{e}}(\psi, x) =
\ds\sum_{d \in A^{(1)}}
\ds\sum_{
\wp \in {\mathcal{P}}_{\psi}
\atop{
\deg_K \wp = x
\atop{
d_{1, \wp}(\psi) = d
\atop{
|d_{2, \wp}(\psi)|_{\infty} < \frac{|\wp|_{\infty}}{q^{c_K f(x)}}
}
}
}
}
1,
$$
and remark that, as in the deduction of  (\ref{inc-exc-result}) in the proof of Theorem \ref{thm1}, the condition $d | d_{1, \wp}(\psi)$
imposes the restriction $\deg d \leq \frac{c_K x}{2}$.
Moreover, the conditions $d_{1, \wp}(\psi) = d$ and 
$|d_{2, \wp}(\psi)|_{\infty} < \frac{|\wp|_{\infty}}{q^{c_K f(x)}}$, coupled with the remark 
\begin{equation}\label{euler-char-d1d2}
|\wp|_{\infty} = |\chi(\psi(\F_{\wp}))|_{\infty} = |d_{1, \wp}(\psi)|_{\infty} |d_{2, \wp}(\psi)|_{\infty},
\end{equation}
impose the restriction $c_K f(x) < \deg d$. Thus
\begin{equation*}
{\it{e}}(\psi,  x)
\leq
\ds\sum_{
d \in A^{(1)}
\atop{
c_K f(x) < \deg d \leq \frac{c_K x}{2}
}
}
\#\{\wp \in {\mathcal{P}}_{\psi}: \deg_K \wp = x, d | d_{1, \wp}(\psi)\}
=
\ds\sum_{
d \in A^{(1)}
\atop{
c_K f(x) < \deg d \leq \frac{c_K x}{2}
}
}
\Pi_{1}(x, K(\psi[d])/K),
\end{equation*}
by also using Proposition \ref{sc-iff}. Using version (\ref{effective-thm2}) of Theorem \ref{thm2} for the range $\deg d \leq \frac{c_K x}{2}$
and Theorem \ref{cheb-dm} for the range $\deg d \leq c_K f(x)$, the above is
\begin{eqnarray}\label{effective-e}
=
\frac{ q^{c_K x} }{x}
\ds\sum_{
d \in A^{(1)}
\atop{
c_K f(x) < \deg d \leq \frac{c_K x}{2}
}
}
\frac{c_d(x)}{[K(\psi[d]) : K]}
+
\O_{\psi, K}\left(q^{ \frac{5 c_K x}{6} }\right)
+
\O_{\psi, K}\left(
\frac{q^{\frac{c_K x}{2} + c_K f(x)}  f(x)}{x}
\right).
\end{eqnarray}
Reasoning as for (\ref{tail-d}) in the proof of Theorem \ref{thm1} and using that now $\gamma \leq 2$, the first term becomes
\begin{equation*}
\ll_{\psi, K} 
q^{c_K \left(x - \left(\frac{4}{\gamma} - 1\right) f(x)\right)}
\frac{\log f(x) }{x}.
\end{equation*}
Since $\ds\lim_{x \rightarrow \infty} f(x) = \infty$ and $f(x) < \frac{x}{2}$,
we deduce that ${\it{e}}(\psi,  x) = \o(\pi_K(x))$.

 \begin{remark}
 The same proof can be carried through in the case $r = 3$, leading to a  weaker  result.
 For $r \geq 4$, however, one requires a more careful analysis that also takes into consideration the behaviour of the intermediate  elementary divisors.
 \end{remark}
 
 Reasoning as at the beginning of the proof,  without loss of generality we may assume that
 there is $0 < \theta < 1$ such that $f(x) \leq \frac{\theta x}{2}$ for all $x$.
 We now show that
 the Dirichlet density of the set
 $
 \left\{
 \wp \in {\cal{P}}_{\psi}:
 |d_{2, \wp}|_{\infty} < \frac{|\wp|_{\infty}}{q^{c_K f(\deg_K \wp)}}
 \right\}
 $
 equals $0$.
 
 Let $s > 1$ and consider the sum
 \begin{eqnarray}\label{T}
 &&
 \ds\sum_{
 \wp \in {\cal{P}}_{\psi}
 \atop{
 |d_{2, \wp}(\psi)|_{\infty} < \frac{|\wp|_{\infty}}{q^{c_K f(\deg_K \wp)}}
 }
 }
 q^{- s c_K \deg_K \wp}
 =
 \ds\sum_{x \geq 1} q^{- s c_K x} \it{e}(\psi, x)
 \nonumber
 \\
 &\ll&
 \ds\sum_{x \geq 1}
 \frac{q^{(1-s) c_K x}}{x}
 \ds\sum_{
 d \in A^{(1)}
 \atop{
 c_K f(x) < \deg d \leq \frac{c_K x}{2}
 }
 }
 \frac{c_d(x)}{[K(\psi[d]) : K]}
 +
 \ds\sum_{x \geq 1} q^{\left(\frac{5}{6} - s\right) c_K x}
 +
 \ds\sum_{x \geq 1} \frac{q^{\left(\frac{1}{2} - s\right) c_K x + c_K f(x)} f(x)}{x}
 \nonumber
 \\
&=:&
 T_1 + T_2 + T_3;
 \end{eqnarray}
 here we  have also used the prior estimate (\ref{effective-e}).
 
First  we  focus on $T_1$. By the definition of $c_d(x)$, we obtain
\begin{eqnarray}\label{T1}
T_1
& :=&
\ds\sum_{x \geq 1}
\frac{q^{(1-s) c_K x}}{x}
\ds\sum_{
d \in A^{(1)}
\atop{
c_K f(x) < \deg d \leq \frac{c_K x}{2}
}
}
\frac{c_d(x)}{[K(\psi[d]) : K]}
\nonumber
\\
&=&
\ds\sum_{d \in A^{(1)}}
\ds\sum_{
j \geq 1
\atop{
c_K f(c_d j) < \deg d \leq \frac{c_K c_d j}{2}
} 
}
\frac{q^{(1-s) c_K c_d j}}{j [K(\psi[d]) : K]}.
\end{eqnarray}

Let $M > 0$. Since $\ds\lim_{x \rightarrow \infty} f(x) = \infty$, there exists $n(M) \in \N$ such that
$f(n) > M$ for all $n \geq n(M)$.
We split the inner sum in  (\ref{T1}) according to whether $c_d j \geq n(M)$ and
$c_d j < n(M)$, and consider each of the two emerging sums separately.

By the above and Theorem \ref{division-degree}, we have
\begin{eqnarray*}
T_{1, 1}
&:=&
\ds\sum_{d \in A^{(1)}}
\ds\sum_{
j \geq 1
\atop{
c_d j \geq n(M)
\atop{
c_K f(c_d j) < \deg d \leq \frac{c_K c_d j}{2}
}
}
}
\frac{q^{(1-s) c_K c_d j}}{j [K(\psi[d]) : K]}
\\
&\leq&
\ds\sum_{d \in A^{(1)}}
\ds\sum_{
j \geq 1
\atop{
\atop{
c_K M < \deg d \leq \frac{c_K c_d j}{2}
}
}
}
\frac{q^{(1-s) c_K c_d j}}{j [K(\psi[d]) : K]}
\\
&\ll_{\psi, K}&
\ds\sum_{
d \in A^{(1)}
\atop{
c_K M < \deg d
}
}
\frac{\log \deg d}{|d|_{\infty}^{\frac{4}{\gamma}}}
\ds\sum_{
j \geq 1
\atop{
}
}
\frac{q^{(1-s) c_K c_d j}}{j}
\\
&\leq&
\ds\sum_{
d \in A^{(1)}
\atop{
c_K M < \deg d
}
}
\frac{\log \deg d}{|d|_{\infty}^{\frac{4}{\gamma}}}
\ds\sum_{
j \geq 1
\atop{
}
}
\frac{q^{(1-s) c_K  j}}{j} (CHECK).
\end{eqnarray*}
Since $s > 1$, the  latter becomes
$$
=
-
\ds\sum_{
d \in A^{(1)}
\atop{
c_K M < \deg d
}
}
\frac{\log \deg d}{|d|_{\infty}^{\frac{4}{\gamma}}}
\log \left(1 - q^{(1-s) c_K c_d}\right).
$$ 
By Lemma \ref{lemma2}, this is
$$
\ll
\frac{
\log (c_K M) 
}{
q^{ \left(\frac{4}{\gamma} - 1 \right) c_K M}
\log q
}
\left|
\log \left(1 - q^{(1-s) c_K}\right)
\right|,
$$ 
which gives that
\begin{equation}\label{T11}
\ds\lim_{s \rightarrow 1+}
\frac{T_{1, 1}}{- \log \left(1 - q^{(1-s) c_K}\right)}
\ll
\frac{\log (c_K M)}{q^{\left(\frac{4}{\gamma} - 1 \right) c_K M} \log q}.
\end{equation}

We also have
\begin{eqnarray*}
T_{1, 2}
&:=&
\ds\sum_{d \in A^{(1)}}
\ds\sum_{
j \geq 1
\atop{
c_d j < n(M)
\atop{
c_K f(c_d j) < \deg d \leq \frac{c_K c_d j}{2}
}
}
}
\frac{q^{(1-s) c_K c_d j}}{j [K(\psi[d]) : K]},
\end{eqnarray*}
a finite sum. Since
$
\ds\lim_{s \rightarrow 1+} \frac{q^{(1-s) c_K \alpha}}{\log \left(1 - q^{(1-s) c_K}\right) } = 0
$
for any $\alpha \in \N$, we deduce that
\begin{equation}\label{T12}
\ds\lim_{s \rightarrow 1+} \frac{T_{1, 2}}{- \log \left(1 - q^{(1-s) c_K}\right)} = 0.
\end{equation}
By taking $M \rightarrow \infty$ and by using (\ref{T11}) and (\ref{T12}), we obtain that
\begin{equation}\label{T1-limit}
\ds\lim_{s \rightarrow 1+} \frac{T_1}{- \log \left(1 - q^{(1-s) c_K}\right)} = 0.
\end{equation} 

We now focus on $T_2$ and note that
\begin{eqnarray}\label{T2-limit}
\ds\lim_{s \rightarrow 1+} \frac{T_{2}}{- \log \left(1 - q^{(1-s) c_K}\right)} 
=
-
\ds\lim_{s \rightarrow 1+}
\frac{
q^{\left(\frac{5}{6} - s\right) c_K}
}{
\left(1 - q^{\left(\frac{5}{6} - s\right) c_K}\right)
\log \left(1 - q^{(1-s) c_K}\right)
}
=
0.
\end{eqnarray} 

It remains to focus on $T_3$.  Recalling that now we are assuming that there exists $0 < \theta < 1$ such that $f(x) \leq \frac{\theta x}{2} \; \; \forall x$, we see that
\begin{eqnarray}\label{T3-limit}
\ds\lim_{s \rightarrow 1+} \frac{T_{3}}{- \log \left(1 - q^{(1-s) c_K}\right)} 
&=&
-\ds\lim_{s \rightarrow 1+} 
\frac{
\ds\sum_{x \geq 1} \frac{q^{\left(\frac{1}{2} - s\right) c_K x + c_K f(x)} f(x)}{x}
}{
\log \left(1 - q^{(1-s) c_K}\right)
}
\nonumber
\\
&\ll&
\left|
\ds\lim_{s \rightarrow 1+}
\frac{
\ds\sum_{x \geq 1} q^{\left(\frac{1}{2} + \theta - s\right)} 
}{
\log \left(1 - q^{(1-s) c_K}\right)
}
\right|
\nonumber
\\
&=&
\left|
\ds\lim_{s \rightarrow 1+}
\frac{
q^{\left(\frac{1}{2} + \theta - s\right) c_K} \left(1 - q^{\left(\frac{1}{2} + \theta - s\right) c_K} \right)^{-1}
}{
\log \left(1 - q^{(1-s) c_K}\right)
}
\right|
\nonumber
\\
&=&
0.
\end{eqnarray}

By combining  (\ref{T}) with (\ref{T1-limit}) - (\ref{T3-limit}), we obtain
$$
\ds\lim_{s \rightarrow 1+}
\frac{
\ds\sum_{
\wp \in {\cal{P}}_{\psi}
\atop{
|d_{2, \wp}(\psi)|_{\infty} < \frac{|\wp|_{\infty}}{q^{c_K f(\deg_K \wp)}}
}
}
q^{-s c_K \deg_K \wp}
}{
- \log \left(1 - q^{(1-s) c_K }\right)
}
=
0.
$$
This completes the proof of the first part of Theorem \ref{thm3}. 

(ii) In what follows, we investigate the average of $|d_{2, \wp}(\psi)|_{\infty}$ as $\wp \in \cal{P}_{\psi}$
varies over primes with $\deg_K \wp = x$. 
By using (\ref{euler-char-d1d2}) and Lemma \ref{lemma3},  and by partitioning the primes $\wp$ according to the divisors of $d_{1, \wp}(\psi)$, we obtain:
\begin{eqnarray}
\cal{A}(\psi, x) 
&:=&
\frac{1}{q^{c_K x}}
\ds\sum_{
\wp \in \cal{P}_{\psi}
\atop{
\deg_K \wp = x
}
}
|d_{2, \wp}(\psi)|_{\infty}
\nonumber
\\
&=&
\frac{1}{q^{c_K x}}
\ds\sum_{
\wp \in \cal{P}_{\psi}
\atop{
\deg_K \wp = x
}
}
\frac{|\wp|_{\infty}}{|d_{1, \wp}(\psi)|_{\infty}}
\nonumber
\\
&=&
\ds\sum_{
\wp \in \cal{P}_{\psi}
\atop{
\deg_K \wp = x
}
}
\frac{1}{|d_{1, \wp}(\psi)|_{\infty}}
\nonumber
\\
&=&
\frac{1}{(q-1)^2}
\ds\sum_{
\wp \in \cal{P}_{\psi}
\atop{
\deg_K \wp = x
}
}
\ds\sum_{
a, b \in A
\atop{
a b | d_{1, \wp}(\psi)
}
}
\frac{\mu_A(a)}{|b|_{\infty}}
\nonumber
\\
&=&
\ds\sum_{
\wp \in \cal{P}_{\psi}
\atop{
\deg_K \wp = x
}
}
\ds\sum_{
a, b \in A^{(1)}
\atop{
a b | d_{1, \wp}(\psi)
}
}
\frac{\mu_A(a)}{|b|_{\infty}}
\nonumber
\\
&=&
\ds\sum_{m \in A^{(1)}}
\ds\sum_{
\wp \in \cal{P}_{\psi}
\atop{
\deg_K \wp = x
\atop{
m | d_{1, \wp}(\psi)
}
}
}
\ds\sum_{
a, b \in A^{(1)}
\atop{
a b | d_{1, \wp}(\psi)
}
}
\frac{\mu_A(a)}{|b|_{\infty}}
\nonumber
\\
&=&
\ds\sum_{
m \in A^{(1)}
\atop{
\deg m \leq \frac{c_K x}{2}
}
}
\ds\sum_{
a, b \in A^{(1)}
\atop{
a b |m
}
}
\frac{\mu_A(a)}{|b|_{\infty}}
\Pi_1(x, K(\psi[m])/K),
\end{eqnarray}
where in the last line we used Proposition \ref{sc-iff} and the same derivation as (\ref{degree}) for tghe length of the sum over $m$.

Now we proceed similarly to the study of $\newd(\psi, x)$ in the proof of Theorem \ref{thm1}.
Specifically, we let $$y := \frac{c_K x}{3}$$ and write
\begin{eqnarray}\label{average-split}
\cal{A}(\psi, x)
&=&
\ds\sum_{
m \in A^{(1)}
\atop{
\deg m \leq y
}
}
\ds\sum_{
a, b \in A^{(1)}
\atop{
a b |m
}
}
\frac{\mu_A(a)}{|b|_{\infty}}
\Pi_1(x, K(\psi[m])/K)
+
\ds\sum_{
m \in A^{(1)}
\atop{
y < \deg m \leq \frac{c_K x}{2}
}
}
\ds\sum_{
a, b \in A^{(1)}
\atop{
a b |m
}
}
\frac{\mu_A(a)}{|b|_{\infty}}
\Pi_1(x, K(\psi[m])/K)
\nonumber
\\
&=:&
\cal{A}_1(\psi, x, y) + \cal{A}_2 (\psi, x, y).
\end{eqnarray}

By Theorem \ref{cheb-dm},
\begin{eqnarray*}
\cal{A}_1(\psi, x, y)
&=&
\frac{q^{c_K x}}{x}
\ds\sum_{
m \in A^{(1)}
\atop{\deg m \leq y}
}
\frac{c_m(x)}{[K(\psi[m]) : K]}
\ds\sum_{
a, b \in A^{(1)}
\atop{a b | m}
}
\frac{\mu_A(a)}{|b|_{\infty}}
+
\O_{\psi, K}\left(
\frac{q^{\frac{c_K x}{2}}}{x}
\ds\sum_{m \in A^{(1)}
\atop{\deg m \leq y}
}
\deg m
\ds\sum_{
a, b \in A^{(1)}
\atop{a \; \text{squarefree}
\atop{
a b | m
}
}
}
\frac{1}{|b|_{\infty}}
\right).
\end{eqnarray*}

By Lemma \ref{moebius-sum}  and the observation that
\begin{equation}\label{phi-quotient}
\frac{\phi_A(\rad(m))}{|m|_{\infty}} \leq 1,
\end{equation}
the sum in the $\O$-term of $\cal{A}_1(\psi, x, y)$ becomes
$$
\ll
\ds\sum_{
m \in A^{(1)}
\atop{\deg m \leq y}
}
\frac{\phi_A(\rad (m))}{|m|_{\infty}} \deg m
\ll
q^y y,
$$
upon also using part (ii) of  Lemma \ref{lemma1}.
Thus, recalling our choice of $y$, we obtain that
\begin{equation*}
\cal{A}_1(\psi, x, y)
=
\frac{q^{c_K x}}{x}
\ds\sum_{
m \in A^{(1)}
\atop{
\deg m \leq \frac{c_K x}{3}
}
}
\frac{c_m(x)}{[K(\psi[m]) : K]}
\ds\sum_{
a, b \in A^{(1)}
\atop{a b | m}
}
\frac{\mu_A(a)}{|b|_{\infty}}
+
\O_{\psi, K}
\left(
q^{\frac{5 c_K x}{6}}
\right).
\end{equation*}

To extend the range of $\deg m$ in the  above sum over $m$, we use 
Proposition \ref{constant-field-size-proposition},
Theorem \ref{division-degree},
Lemma \ref{moebius-sum},
(\ref{phi-quotient}),
and part (ii) of Lemma \ref{lemma2}. We obtain: 
\begin{eqnarray*}
\frac{q^{c_K x}}{x}
\left|
\ds\sum_{
m \in A^{(1)}
\atop{
\deg m > y
}
}
\frac{c_m(x)}{[K(\psi[m]) : K]}
\ds\sum_{
a, b \in A^{(1)}
\atop{
a b | m
}
}
\frac{\mu_A(a)}{|b|_{\infty}}
\right|
&\ll_{\psi}&
\frac{q^{c_K x}}{x}
\ds\sum_{
m \in A^{(1)}
\atop{\deg m > y}
}
\frac{1}{[K(\psi[m]) : K]}
\ds\sum_{
a, b \in A^{(1)}
\atop{
a \; \text{squarefree}
\atop{
a b = d
}
}
}
\frac{1}{|b|_{\infty}}
\\
&\ll_{\psi}&
\frac{q^{c_K x}}{x}
\ds\sum_{
m \in A^{(1)}
\atop{
\deg m > y
}
}
\frac{\log \deg m}{|m|_{\infty}^{\frac{4}{\gamma}}}
\cdot
\frac{\phi_A(\rad (m))}{|m|_{\infty}}
\\
&\ll_{\psi}&
\frac{q^{c_K x - \left(\frac{4}{\gamma} - 1\right) y} \log y}{x y},
\end{eqnarray*}
using once again that $\gamma \leq 2$. Recalling that $y = \frac{c_K x}{3}$, we deduce that
\begin{equation}
\cal{A}_1(\psi, x, y)
=
\frac{q^{c_K x}}{x}
\ds\sum_{
m \in A^{(1)}
}
\frac{c_m(x)}{[K(\psi[m]) : K]}
\ds\sum_{
a, b \in A^{(1)}
\atop{a b | m}
}
\frac{\mu_A(a)}{|b|_{\infty}}
+
\O_{\psi, K}
\left(
q^{\frac{5 c_K x}{6}}
\right).
\end{equation}

We now turn to $\cal{A}_2(\psi, x, y)$. By Lemma  \ref{moebius-sum} and  (\ref{phi-quotient}),
we have
$$
|\cal{A}_2(\psi, x, y)|
\leq
\ds\sum_{
m \in A^{(1)}
\atop{
y < \deg m \leq \frac{c_K x}{2}
}
}
\ds\sum_{
a, b \in A^{(1)}
\atop{
a \; \text{squarefree}
\atop{
a b | m
}
}
}
\frac{1}{|b|_{\infty}}
\Pi_1(x, K(\psi[m])/K)
\leq
\ds\sum_{
m \in A^{(1)}
\atop{
y < \deg m \leq \frac{c_K x}{2}
}
}
\Pi_1(x, K(\psi[m])/K).
$$
This is estimated exactly as $|\newd_2(\psi, x, y)|$ in the proof of Theorem \ref{thm1}, giving
the upper bound 
$\O_{\psi, K}\left(q^{\frac{5 c_K x}{6}}\right)$ (with $y = \frac{c_K x}{3}$).

Putting everything together, we deduce that
\begin{equation}\label{effective-average}
\cal{A}(\psi, x)
=
\frac{q^{c_K x}}{x}
\ds\sum_{
m \in A^{(1)}
}
\frac{c_m(x)}{[K(\psi[m]) : K]}
\ds\sum_{
a, b \in A^{(1)}
\atop{a b | m}
}
\frac{\mu_A(a)}{|b|_{\infty}}
+
\O_{\psi, K}\left(q^{\frac{5 c_K x}{6}}\right),
\end{equation}
completing the proof of part (ii) of Theorem \ref{thm3}.

\begin{remark}
As with Theorem \ref{thm1},  (\ref{effective-average}) is stronger than the asymptotic formula stated in 
part (ii) of Theorem \ref{thm3}, as it provides us with explicit error terms. 
Moreover, when $q$ odd, $r = 2$, $\gamma = 2$, and $K = k$, the  methods of \cite{CoSh} lead to the improved formula
\begin{eqnarray*}
\frac{1}{q^{x}}
\ds\sum_{
\wp \in {\cal{P}}_{\psi}
\atop{\deg \wp = x}
}
|d_{2, \wp}(\psi)|_{\infty}
&=&
\frac{q^{x}}{x}
\ds\sum_{d \in A^{(1)}} \frac{c_d(x)}{[k(\psi[d]) : k]}
\ds\sum_{m, n \in A^{(1)} \atop{m n = d}} \frac{\mu_A(m)}{|n|_{\infty}}
+
\O_{\psi, k}
\left(
q^{\frac{3  x}{4}}
\right).
\end{eqnarray*}
\end{remark}


\section{Concluding remarks}

\noindent
\begin{remark}
{\emph{
Before investigating the Drinfeld module analogues of (\ref{duke}) and (\ref{cojocaru-murty}),  it is natural to consider their analogues in the context of an elliptic curve $E$ over a global function field. 
This is precisely the content of \cite{CoTo}.
For clarity, one of the results  Cojocaru and T\'{o}th prove is that,  given an
elliptic curve $E$ over $K := \F_q(T)$ with $j(E) \not\in \F_q$, then,  provided $\text{char} \; \F_q \geq 5$,
for any $x \in \N$ such that
$x \rightarrow \infty$ and for any $\varepsilon > 0$, we have
\begin{eqnarray}\label{coto}
\#\{\fp \; \text{prime of good reduction for $E$}:  
\deg (\fp) = x,
E_{\fp}(\F_{\fp}) \; \text{cyclic} \} 
&=&
\ds\sum_{
m \geq 1 
\atop{
(m, \text{char} \; \F_q) = 1
\atop{m | q^x - 1
}
}
}
\frac{\mu(m) c_{m}}{[K(E[m]): K]}
\cdot
 \frac{q^x}{x}
 \nonumber
\\
&+&
\O_{E, \varepsilon}
\left(
 \frac{q^{x \left(\frac{1}{2} + \varepsilon\right)}  }{x}
 \right),
 \end{eqnarray}
  where
$\mu(\cdot)$ is the M\"{o}bius function on $\Z$, 
$K(E[m])$ is the $m$th division field of $E$,  
and $c_{m}$ is the multiplicative order of $q$ modulo $m$.
The   formula  is  unconditional. Unusually, it is a direct  consequence of the effective Chebotarev Density Theorem for function fields, no extra sieving  being required. 
This special simplification occurs thanks to the inclusion 
$K \F_{q^{c_m}} \subseteq K(E[m])$, 
and hence to the resulting strong restriction $m | q^x - 1$ in the sum over $m$.
}}
 \end{remark}

\noindent
\begin{remark}
{\emph{
It is natural to ask what the best error terms in the asymptotic (\ref{effective-d}) leading to Theorem \ref{thm1} might be. 
For $r \geq 4$,
our methods give rise to a dominant error term 
$ \O_{\psi,K,d} \left(q^{\frac{(r + 2)c_K x}{2 r}} \right)$, which, as $r \rightarrow \infty$, is of the same order of magnitude as the one in (\ref{coto}) and as the best error term with respect to $x$ in the standard Chebotarev Density Theorem \ref{cheb-dm} applied to one (or finitely many) field(s).
When $r = 2$, we obtain
$\O_{\psi,K,d}\left(q^{\frac{5}{6}c_K x}\right)$.
Moreover, by making better use of the properties of Drinfeld modules with a non-trivial endomorphism ring, in \cite{CoSh} we succeed in lowering this error term to $\O_{\psi,K,d}\left(q^{\frac{3}{4}x}\right)$ if $\psi \in \Drin_{A}(k)$ has rank 2 and is such that $\End_{\overline{k}}(\psi)$ is a maximal $A$-order in a field extension of $k$ of degree 2 (and provided $q$ is odd).
It remains to investigate  the true order of magnitude of the error term for such small $r$.
}}
 \end{remark}

\noindent
\begin{remark}
{\emph{
It is  also natural to investigate the positivity of the densities $\delta_{\psi, K}(d)$ 
in Theorem \ref{thm1}.
 The methods required for such a study are of a completely different nature than the ones used in the present paper and, as such,   this study is to be addressed separately.
Nevertheless, we can already exhibit Drinfeld modules  for which some of these densities are positive. For example,  a consequence of part (b) of Theorem 5 in  \cite{CoPa} is that, for $q$ odd and 
 for any rank 2 Drinfeld
$A$-module $\psi$ over $k$  with $\End_{\overline{k}}(\psi)$ the maximal order in an imaginary quadratic extension of $k$, we have
$$
\delta_{\psi, k}(1) \geq \frac{1}{2}.
$$
A consequence of the main result of \cite{Zy} is that, 
 for the rank 2  Drinfeld module $\psi \in \Drin_{A}(k)$ defined by
 $$
 \psi_T = T + \tau - T^{q-1} \tau^2,
 $$
 we have
 $$
 \delta_{\psi, k}(d) = \ds\prod_{\ell \in A^{(1)} \atop{\ell \; \text{prime}}} \left(1 - \frac{1}{\#\GL_2(A/\ell d A)}\right) > 0.
 $$
}}
 \end{remark}

\vspace*{1cm}

\noindent
{\bf{Acknowledgments.}} Some of the work on this paper was done while A.C. Cojocaru  was a member 
at the Institute for Advanced Study  in Princeton, USA,
and a guest researcher at the Max Planck
Institute for Mathematics  in Bonn and the University of G\"{o}ttingen, Germany.
She is grateful to all institutes for excellent work facilities and funding.
In particular, she is grateful for research support from the National Science Foundation under agreements No. DMS-0747724 and No. DMS-0635607,
and from the  European Research Council under Starting Grant 258713.

Both authors are grateful to 
G. B\"{o}ckle, F. Breuer, I. Chen, J. Long-Hoelscher,  M. Papikian, and D. Thakur for helpful discussions related to the background on Drinfeld modules.


\end{document}